\def\today{\ifcase \month \or
   January \or February \or March \or April \or
   May \or June \or July \or August \or
   September \or October \or November \or December \fi
   \space\number\day , \number\year}
  \newcommand\@dotsep{4.5}
  \def\@tocline#1#2#3#4#5#6#7{\relax
     \ifnum #1>\c@tocdepth % then omit
     \else
     \par \addpenalty\@secpenalty\addvspace{#2}%
     \begingroup \hyphenpenalty\@M
     \@ifempty{#4}{%
     \@tempdima\csname r@tocindent\number#1\endcsname\relax
        }{%
         \@tempdima#4\relax
           }%
      \parindent\z@ \leftskip#3\relax \advance\leftskip\@tempdima\relax
      \rightskip\@pnumwidth plus1em \parfillskip-\@pnumwidth
       #5\leavevmode\hskip-\@tempdima #6\relax
       \leaders\hbox{$\m@th
       \mkern \@dotsep mu\hbox{.}\mkern \@dotsep mu$}\hfill
       \hbox to\@pnumwidth{\@tocpagenum{#7}}\par
       \nobreak
        \endgroup
         \fi}
\begin{document}

%\DeclareRobustCommand{\SkipTocEntry}[4]{} 

\makeatletter
\@addtoreset{figure}{section}
\def\thefigure{\thesection.\@arabic\c@figure}
\def\fps@figure{h,t}
\@addtoreset{table}{bsection}

\def\thetable{\thesection.\@arabic\c@table}
\def\fps@table{h, t}
\@addtoreset{equation}{section}
\def\theequation{%\thesection.
\arabic{equation}}
\makeatother

\newcommand{\bfi}{\bfseries\itshape}

\newtheorem{theorem}{Theorem}
\newtheorem{acknowledgment}[theorem]{Acknowledgment}
\newtheorem{corollary}[theorem]{Corollary}
\newtheorem{definition}[theorem]{Definition}
\newtheorem{example}[theorem]{Example}
\newtheorem{lemma}[theorem]{Lemma}
\newtheorem{notation}[theorem]{Notation}
\newtheorem{proposition}[theorem]{Proposition}
\newtheorem{remark}[theorem]{Remark}
\newtheorem{setting}[theorem]{Setting}

\numberwithin{theorem}{section}
\numberwithin{equation}{section}

\renewcommand{\1}{{\bf 1}}
\newcommand{\Ad}{{\rm Ad}}
\newcommand{\Alg}{{\rm Alg}\,}
\newcommand{\Aut}{{\rm Aut}\,}
\newcommand{\ad}{{\rm ad}}
\newcommand{\Borel}{{\rm Borel}}
\newcommand{\Ci}{{\mathcal C}^\infty}
\newcommand{\Cpol}{{\mathcal C}^\infty_{\rm pol}}
\newcommand{\Der}{{\rm Der}\,}
\newcommand{\de}{{\rm d}}
\newcommand{\ee}{{\rm e}}
\newcommand{\End}{{\rm End}\,}
\newcommand{\ev}{{\rm ev}}
\newcommand{\id}{{\rm id}}
\newcommand{\ie}{{\rm i}}
\newcommand{\GL}{{\rm GL}}
\newcommand{\gl}{{{\mathfrak g}{\mathfrak l}}}
\newcommand{\Hom}{{\rm Hom}\,}
\newcommand{\Img}{{\rm Im}\,}
\newcommand{\Ind}{{\rm Ind}}
\newcommand{\Ker}{{\rm Ker}\,}
\newcommand{\Lie}{\text{\bf L}}
\newcommand{\m}{\text{\bf m}}
\newcommand{\pr}{{\rm pr}}
\newcommand{\Ran}{{\rm Ran}\,}
\renewcommand{\Re}{{\rm Re}\,}
\newcommand{\Si}{{\mathcal S}^0}
\newcommand{\so}{\text{so}}
\newcommand{\spa}{{\rm span}\,}
\newcommand{\Tr}{{\rm Tr}\,}
\newcommand{\Op}{{\rm Op}}
\newcommand{\U}{{\rm U}}
\newcommand{\Wig}{{\mathcal W}}

\newcommand{\CC}{{\mathbb C}}
\newcommand{\HH}{{\mathbb H}}
\newcommand{\RR}{{\mathbb R}}
\newcommand{\TT}{{\mathbb T}}

\newcommand{\Ac}{{\mathcal A}}
\newcommand{\Bc}{{\mathcal B}}
\newcommand{\Cc}{{\mathcal C}}
\newcommand{\Dc}{{\mathcal D}}
\newcommand{\Ec}{{\mathcal E}}
\newcommand{\Fc}{{\mathcal F}}
\newcommand{\Hc}{{\mathcal H}}
\newcommand{\Jc}{{\mathcal J}}
\newcommand{\Lc}{{\mathcal L}}
\renewcommand{\Mc}{{\mathcal M}}
\newcommand{\Nc}{{\mathcal N}}
\newcommand{\Oc}{{\mathcal O}}
\newcommand{\Pc}{{\mathcal P}}
\newcommand{\Sc}{{\mathcal S}}
\newcommand{\Tc}{{\mathcal T}}
\newcommand{\Vc}{{\mathcal V}}
\newcommand{\Uc}{{\mathcal U}}
\newcommand{\Yc}{{\mathcal Y}}

\newcommand{\Bg}{{\mathfrak B}}
\newcommand{\Fg}{{\mathfrak F}}
\newcommand{\Gg}{{\mathfrak G}}
\newcommand{\Ig}{{\mathfrak I}}
\newcommand{\Jg}{{\mathfrak J}}
\newcommand{\Lg}{{\mathfrak L}}
\newcommand{\Pg}{{\mathfrak P}}
\newcommand{\Sg}{{\mathfrak S}}
\newcommand{\Xg}{{\mathfrak X}}
\newcommand{\Yg}{{\mathfrak Y}}
\newcommand{\Zg}{{\mathfrak Z}}

\newcommand{\ag}{{\mathfrak a}}
\newcommand{\bg}{{\mathfrak b}}
\newcommand{\dg}{{\mathfrak d}}
\renewcommand{\gg}{{\mathfrak g}}
\newcommand{\hg}{{\mathfrak h}}
\newcommand{\kg}{{\mathfrak k}}
\newcommand{\mg}{{\mathfrak m}}
\newcommand{\n}{{\mathfrak n}}
\newcommand{\og}{{\mathfrak o}}
\newcommand{\pg}{{\mathfrak p}}
\newcommand{\sg}{{\mathfrak s}}
\newcommand{\tg}{{\mathfrak t}}
\newcommand{\ug}{{\mathfrak u}}
\newcommand{\zg}{{\mathfrak z}}

\newcommand{\ZZ}{\mathbb Z}
\newcommand{\NN}{\mathbb N}
\newcommand{\BB}{\mathbb B}

\newcommand{\ep}{\varepsilon}

\newcommand{\hake}[1]{\langle #1 \rangle }

\newcommand{\scalar}[2]{\langle #1 ,#2 \rangle }
\newcommand{\vect}[2]{(#1_1 ,\ldots ,#1_{#2})}
\newcommand{\norm}[1]{\Vert #1 \Vert }
\newcommand{\normrum}[2]{{\norm {#1}}_{#2}}

\newcommand{\upp}[1]{^{(#1)}}
\newcommand{\p}{\partial}

\newcommand{\opn}{\operatorname}
\newcommand{\slim}{\operatornamewithlimits{s-lim\,}}
\newcommand{\sgn}{\operatorname{sgn}}

\newcommand{\seq}[2]{#1_1 ,\dots ,#1_{#2} }
\newcommand{\loc}{_{\opn{loc}}}

\makeatletter
\title[Modulation spaces for representations of Lie groups]{Modulation spaces of symbols for representations of nilpotent Lie groups}
\author{Ingrid Belti\c t\u a and Daniel Belti\c t\u a}
\address{Institute of Mathematics ``Simion Stoilow'' 
of the Romanian Academy, 
P.O. Box 1-764, Bucharest, Romania}
\email{Ingrid.Beltita@imar.ro}
\email{Daniel.Beltita@imar.ro}
\keywords{pseudo-differential Weyl calculus; modulation space;  
nilpotent Lie group; semidirect product}
\subjclass[2000]{Primary 47G30; Secondary 22E25,22E27,35S05}
%\translator{}
%\dedicatory{}
%\thanks{\textit{File name}: \texttt{square\_September27\_2009.tex}}
%\thanks{$^{\ast)}$, $^{\ast\ast)}$ Institute of Mathematics ``Simion Stoilow'' 
%of the Romanian Academy, 
%P.O. Box 1-764, Bucharest, Romania}
%\thanks{\textit{Email addresses:} Ingrid.Beltita@imar.ro, Daniel.Beltita@imar.ro}
\date{September 27, 2009}%{\today}
\makeatother

\begin{abstract} 
We investigate continuity properties 
of  operators obtained as values of the Weyl correspondence constructed by 
N.V.~Pedersen (Invent.\ Math.\ 118 (1994), 1--36) 
for arbitrary irreducible representations of nilpotent Lie groups.  
To this end we introduce modulation spaces for such 
representations and establish some of their basic properties.  
The situation of square integrable representations is particularly important 
and in the special case of 
the Schr\"odinger representation of the Heisenberg group 
we recover the classical modulation spaces used in the time-frequency analysis.  
%\textit{2000 MSC:} Primary 47G30; Secondary 22E25,22E65,35S05
%\textit{Keywords:} Weyl calculus; magnetic field; Lie group; semidirect product
\end{abstract}

\maketitle

%\tableofcontents

\section{Introduction}

The representation theory of 
the $(2n+1)$-dimensional Heisenberg group $\HH_{2n+1}$ 
provides a natural background for the pseudo-differential calculus on~$\RR^n$. 
It is well known that the representation theoretic approach 
has led to a deeper understanding of the Weyl calculus, 
which resulted in simplified proofs and improvements 
for many basic results. 
A celebrated example in this connection is the Calder\'on-Vaillancourt theorem 
on $L^2$-boundedness for pseudo-differential operators~(\cite{CV72}). 
This classical theorem was strengthened in the paper~\cite{GH99} 
by using the modulation spaces, which are function (or distribution) spaces 
defined in terms of the Schr\"odinger representations of Heisenberg groups. 
The modulation spaces were introduced in~\cite{Fe83} in the framework of 
harmonic analysis of locally compact abelian groups. 

On the other hand, a remarkable Weyl calculus was set up in \cite{Pe94} 
for arbitrary unitary irreducible representations of any nilpotent Lie group. 
We shall call it the \emph{Weyl-Pedersen calculus}. 
It is a challenging task to understand this interaction 
of the ideas of pseudo-differential calculus with 
the representation theory of nilpotent Lie groups. 

In the present paper we address the above problem in the shape of 
the $L^2$-bound\-ed\-ness theorems. 
Specifically, we are going to investigate continuity properties 
of the operators constructed by 
the Weyl-Pedersen calculus. 
For this purpose we introduce the modulation spaces $M^{r,s}_\phi(\pi)$ 
defined in terms of an arbitrary irreducible representation~$\pi$ 
of a nilpotent Lie group~$G$. 
One key feature of our representation theoretic approach is that 
if $\Oc$ stands for the coadjoint orbit corresponding to $\pi$ (\cite{Ki62}), 
then the symbols of the  operators constructed by the Weyl-Pedersen calculus
are functions or distributions on the coadjoint orbit~$\Oc$, 
while 
the Hilbert space $L^2(\Oc)$ carries a natural irreducible representation 
$\pi^{\#}$  of the nilpotent Lie group $G\ltimes G$. 
Therefore our general notion of modulation spaces for irreducible representations allows us to investigate the modulation spaces of symbols for the operators constructed by the Weyl-Pedersen calculus for  the representation~$\pi$. 
This approach also reveals the representation theoretic background 
of the $L^2$-boundedness theorem of \cite{GH99}. 

We  find several of the familiar properties 
of the classical modulation spaces, such as: 
\begin{itemize}
\item[-] continuity of the  operators constructed by the Weyl-Pedersen calculus with  
symbols in an appropriate modulation space~$M^{\infty,1}_\Phi(\pi^{\#})$ (Corollary~\ref{C3});
\item[-] independence on the choice of a window function, 
and covariance of the Weyl-Pedersen calculus, in the case of square-integrable representations (Theorems \ref{indep_sq} and \ref{cov}).  
\end{itemize}

Besides the aforementioned reasons, 
the present research has also been motivated by the recent interest in 
the magnetic pseudo-differential Weyl calculus on~$\RR^n$ 
(see for instance \cite{MP04}, \cite{IMP07}, \cite{MP09}, 
and the references therein), 
which was partially extended to nilpotent Lie groups in 
the papers \cite{BB09a} and \cite{BB09b}. 
Specifically, the results of the present paper apply to 
the Weyl calculus associated with a polynomial magnetic field on~$\RR^n$, 
in particular complementing the $L^2$-boundedness theorem 
established in \cite{IMP07} for magnetic fields 
whose components are bounded and 
so are also their partial derivatives of arbitrarily high degree.

\subsection*{Notation}
Throughout the paper we denote by $\Sc(\Vc)$ the Schwartz space 
on a finite-dimensional real vector space~$\Vc$. 
That is, $\Sc(\Vc)$ is the set of all smooth functions 
that decay faster than any polynomial together with 
their partial derivatives of arbitrary order. 
Its topological dual ---the space of tempered distributions on $\Vc$--- 
is denoted by $\Sc'(\Vc)$. 

We shall also use the convention that the Lie groups are denoted by 
upper case Latin letters and the Lie algebras are denoted 
by the corresponding lower case Gothic letters. 

For basic notions on Weyl pseudo-differential calculus, we refer to \cite{Hor07}, \cite{Fo89}
and \cite{Gr01}.

\section{Modulation spaces for unitary irreducible representations}

\subsection{Preliminaries on semidirect products}

\begin{definition}\label{sd_def}
\normalfont
Let $G_1$ and $G_2$ be connected Lie groups and assume that we have 
 a continuous group homomorphism 
$\alpha\colon G_1\to\Aut G_2$, $g_1\mapsto\alpha_{g_1}$. 
The corresponding \emph{semidirect product of Lie groups} 
$G_1\ltimes_\alpha G_2$ is the connected Lie group whose 
underlying manifold is the Cartesian product $G_1\times G_2$ 
and whose group operation is given by 
\begin{equation}\label{sd_prod}
(g_1,g_2)\cdot(h_1,h_2)=(g_1h_1,\alpha_{h_1^{-1}}(g_2)h_2) 
\end{equation}
whenever $g_j,h_j\in G_j$ for $j=1,2$. 

Let us denote by $\dot\alpha\colon\gg_1\to\Der\gg_2$ 
the homomorphism of Lie algebras defined as the differential 
of the Lie group homomorphism $G_1\to\Aut\gg_2$, $g_1\mapsto\Lie(\alpha_{g_1})$. 
Then the \emph{semidirect product of Lie algebras} 
$\gg_1\ltimes_{\dot\alpha}\gg_2$ 
is the Lie algebra whose underlying linear space is the Cartesian product 
$\gg_1\times\gg_2$ with the Lie bracket given by 
\begin{equation}\label{sd_bracket}
[(X_1,X_2),(Y_1,Y_2)]=([X_1,Y_1],\dot\alpha(X_1)Y_2-\dot\alpha(Y_1)X_2+[X_2,Y_2])
\end{equation}
if $X_j,Y_j\in\gg_j$ for $j=1,2$. 
One can prove that $\gg_1\ltimes_{\dot\alpha}\gg_2$ is the Lie algebra 
of the Lie group $G_1\ltimes_\alpha G_2$ 
(see for instance Ch.~9 in \cite{Ho65}). 
\qed
\end{definition}

\begin{remark}\label{sd_rem}
\normalfont
Let $G_1$ and $G_2$ be nilpotent Lie groups and  
$\alpha\colon G_1\to\Aut G_2$ be a \emph{unipotent automorphism}, 
in the sense that for every $X_1\in\gg_1$ there exists an integer $m\ge1$ such that 
$\dot\alpha(X_1)^m=0$. 
Then an inspection of \eqref{sd_bracket} shows that $\gg_1\ltimes_{\dot\alpha}\gg_2$ 
is a nilpotent Lie algebra, hence $G_1\ltimes_\alpha G_2$ is a nilpotent Lie group. 
\qed
\end{remark}

\begin{example}\label{sd_ex2}
\normalfont
For an arbitrary Lie group $G$ with the center $Z$, 
let us specialize Definition~\ref{sd_def} for $G_1=G_2:=G$ and 
$\alpha\colon G\to\Aut G$, $g\mapsto\alpha_g$, 
where $\alpha_g(h)=ghg^{-1}$ whenever $g,h\in G$. 
Then the corresponding semidirect product will always be denoted simply by $G\ltimes G$ 
and has the following properties: 
\begin{enumerate}
\item\label{sd_ex2_item1} 
If $G$ is nilpotent, then so is $G\ltimes G$. 
\item\label{sd_ex2_item2}  
The Lie algebra of $G\ltimes G$ is $\gg\ltimes_{\ad_{\gg}}\gg$,  
which will be denoted simply by $\gg\ltimes\gg$,  
and the center of $G\ltimes G$ is $Z\times Z$. 
\item\label{sd_ex2_item3} 
The exponential map of the Lie group $G\ltimes G$ is given by 
$$\exp_{G\ltimes G}(X,Y)=(\exp_G X,\exp_G(-X)\exp_G(X+Y)) $$
for every $(X,Y)\in\gg\ltimes_{\ad_{\gg}}\gg$. 
\item\label{sd_ex2_item4} The mapping 
$$\mu\colon G\times G\to G\ltimes G,\quad (g,h)\mapsto(gh,g) $$
is an isomorphism of Lie groups, and the corresponding isomorphism of Lie algebras is 
$\Lie(\mu)\colon\gg\times\gg\to\gg\ltimes\gg$, $(X,Y)\mapsto(X+Y,X)$. 
\end{enumerate}
In fact, property~\eqref{sd_ex2_item1} follows by 
Remark~\ref{sd_rem}. 
Property~\eqref{sd_ex2_item2} is a consequence of the fact that 
$\dot\alpha=\ad_{\gg}\colon\gg\to\Der\gg$ along with~\eqref{sd_prod}. 

To prove property~\eqref{sd_ex2_item3}, note that the mapping 
$\Pi\colon G\ltimes G\to G$, $(g_1,g_2)\to g_1g_2$ is a homomorphism of Lie groups, 
hence we have the commutative diagram 
$$\begin{CD}
\gg\ltimes\gg @>{\Lie(\Pi)}>> \gg \\
@V{\exp_{G\ltimes G}}VV @VV{\exp_G}V \\
G\ltimes G @>>{\Pi}> G
\end{CD}
$$
where it is easy to see that the Lie algebra homomorphism 
$\Lie(\Pi)\colon \gg\ltimes\gg\to\gg$ is given by $(X,Y)\mapsto X+Y$. 
Now let $(X,Y)\in\gg\ltimes\gg$ arbitrary. 
It is clear that there exists $g\in G$ such that $\exp_{G\ltimes G}(X,Y)=(\exp_G X,g)$, 
and then the above commutative diagram shows that 
$\exp_G(X+Y)=\Pi(\exp_{G\ltimes G}(X,Y))=\Pi(\exp_G X,g)=(\exp_G X)g$, 
whence $g=\exp_G(-X)\exp_G(X+Y)$. 

Finally, property~\eqref{sd_ex2_item4} follows by a straightforward computation. 
\qed
\end{example}

\subsection{Weyl-Pedersen calculus for unitary irreducible representations}

\begin{setting}\label{predual_sett}
\normalfont 
Throughout the present section we shall use the following notation:
\begin{enumerate}
 \item Let $G$ be a connected, simply connected, nilpotent Lie group with the Lie algebra~$\gg$.  
 Then the exponential map $\exp_G\colon\gg\to G$ is a diffeomorphism 
 with the inverse denoted by $\log_G\colon G\to\gg$. 
 \item We denote by $\gg^*$ the linear dual space to $\gg$ and 
  by $\hake{\cdot,\cdot}\colon\gg^*\times\gg\to\RR$ 
  the natural duality pairing. 
 \item Let $\xi_0\in\gg^*$ with the corresponding coadjoint orbit $\Oc:=\Ad_G^*(G)\xi_0\subseteq\gg^*$.  
 \item The \emph{isotropy group} at $\xi_0$ is $G_{\xi_0}:=\{g\in G\mid\Ad_G^*(g)\xi_0=\xi_0\}$ 
 with the corresponding \emph{isotropy Lie algebra} $\gg_{\xi_0}=\{X\in\gg\mid\xi_0\circ\ad_{\gg}X=0\}$. 
 The \emph{center} $\zg:=\{X\in\gg\mid[X,\gg]=\{0\}\}$ clearly satisfies $\zg\subseteq\gg_{\xi_0}$. 
 \item Let $n:=\dim\gg$ and fix a sequence of ideals in $\gg$, 
$$\{0\}=\gg_0\subset\gg_1\subset\cdots\subset\gg_n=\gg$$
such that $\dim(\gg_j/\gg_{j-1})=1$ and $[\gg,\gg_j]\subseteq\gg_{j-1}$ 
for $j=1,\dots,n$. 
 \item Pick any $X_j\in\gg_j\setminus\gg_{j-1}$ for $j=1,\dots,n$, 
so that the set $\{X_1,\dots,X_n\}$ will be a \emph{Jordan-H\"older basis} in~$\gg$. 
\item The set of \emph{jump indices} of the coadjoint orbit $\Oc$ 
with respect to the above Jordan-H\"older basis is  
$e:=\{j\in\{1,\dots,n\}\mid \gg_j\not\subseteq\gg_{j-1}+\gg_{\xi_0}\}$ 
and does not depend on the choice of $\xi_0\in\Oc$ 
(see also Prop.~2.4.1 in \cite{Pe84}). 
The corresponding \emph{predual of the coadjoint orbit}~$\Oc$ is  
$$\gg_e:=\spa\{X_j\mid j\in J_{\xi_0}\}\subseteq\gg.$$
We shall denote $e=\{j_1,\dots,j_d\}$ with $1\le j_1<\cdots<j_d\le n$. 
\item We shall always consider $\Oc$ endowed with 
its canonical Liouville measure 
(see for instance the remark after the statement of 
the theorem in \S~6, Ch. II, Part 2 in \cite{Pu67}).
 \item Let $\pi\colon G\to\Bc(\Hc)$ be a fixed unitary irreducible representation 
associated with the coadjoint orbit $\Oc$ by Kirillov's theorem (\cite{Ki62}). 
\end{enumerate}
\qed 
\end{setting}

\begin{remark}\label{distrib}
\normalfont
The space of \emph{smooth vectors}  
$\Hc_\infty:=\{v\in\Hc\mid \pi(\cdot)v\in\Ci(G,\Hc)\}$ 
is a Fr\'echet space in a natural way and is a dense linear subspace of $\Hc$ 
which is invariant under the unitary operator $\pi(g)$ for every $g\in G$.  
The \emph{derivate representation} $\de\pi\colon\gg\to\End(\Hc_\infty)$  
is a homomorphism of Lie algebras defined by 
$$(\forall X\in\gg,v\in\Hc_\infty)\quad\de\pi(X)v=\frac{\de}{\de t}\Big{\vert}_{t=0}\pi(\exp_G(tX))v. $$
We denote by $\Hc_{-\infty}$ the space of all continuous antilinear functionals on $\Hc_\infty$ 
and the corresponding pairing will be denoted by 
$(\cdot\mid\cdot)\colon\Hc_{-\infty}\times\Hc_\infty\to\CC$.  
just as the scalar product in $\Hc$, since they agree on $\Hc_\infty\times\Hc_{\infty}$ 
if we think of the natural inclusions $\Hc_\infty\hookrightarrow\Hc\hookrightarrow\Hc_{-\infty}$. 
(See for instance \cite{Ca76} for more details.)
\qed
\end{remark}

\begin{remark}\label{smoothness}
\normalfont 
We now recall a few facts from subsect.~1.2 in \cite{Pe94} for later use.
Let us denote by $\Sg_p(\Hc)$ the Schatten ideals of operators on $\Hc$ for $1\le p\le\infty$. 
Consider the unitary representation 
$\pi^{\otimes 2}\colon G\times G\to\Bc(\Sg_2(\Hc))$ defined by 
$$(\forall g_1,g_2\in G)(\forall T\in\Sg_2(\Hc))\quad 
\pi^{\otimes 2}(g_1,g_2)T=\pi(g_1)T\pi(g_2)^{-1}.$$
It is not difficult to see that $\pi^{\otimes 2}$ is strongly continuous.  
The corresponding space of smooth vectors is denoted by $\Bc(\Hc)_\infty$ 
and is called the space of \emph{smooth operators} for the representation~$\pi$. 
One can prove that actually $\Bc(\Hc)_\infty\subseteq\Sg_1(\Hc)$. 

For an alternative description of $\Bc(\Hc)_\infty$ 
let $\gg_{\CC}:=\gg\otimes_{{\RR}}\CC$ be the complexification of $\gg$ 
with the corresponding universal associative enveloping algebra $\U(\gg_{\CC})$. 
Then the aforementioned homomorphism of Lie algebras $\de\pi$ has a unique extension 
to a homomorphism of unital associative algebras $\de\pi\colon\U(\gg_{\CC})\to\End(\Hc_\infty)$. 
One can prove that for $T\in\Bc(\Hc)$ we have $T\in\Bc(\Hc)_\infty$ if and only if  $T(\Hc)+T^*(\Hc)\subseteq\Hc_\infty$ and $\de\pi(u)T,\de\pi(u)T^*\in\Bc(\Hc)$ 
for every $u\in\U(\gg_{\CC})$. 

Since 
$\{(\cdot\mid f_1)f_2\mid f_1,f_2\in\Hc_\infty\}
\subseteq\Bc(\Hc)_\infty\subseteq\Sg_1(\Hc)$ and $\Hc_\infty$ 
is dense in $\Hc$, we get continuous inclusion maps 
\begin{equation}\label{smoothness_eq1}
\Bc(\Hc)_\infty\hookrightarrow\Sg_1(\Hc)\hookrightarrow\Bc(\Hc)
\hookrightarrow\Bc(\Hc)_\infty^*,
\end{equation}
where the latter mapping is constructed by using 
the well-known isomorphism $(\Sg_1(\Hc))^*\simeq\Bc(\Hc)$ 
given by the usual semifinite trace on $\Bc(\Hc)$.
\qed
\end{remark}

\begin{definition}\label{amb}
\normalfont
The Fourier transform 
$\Sc(\Oc)\to\Sc(\gg_e)$, $a\mapsto\widehat{a}$,
defined by 
$$\widehat{a}(X)=\int\limits_{\Oc}\ee^{-\ie\hake{\xi,X}}a(\xi)\de\xi$$
is an isomorphism of Fr\'echet spaces.
The Lebesgue measure on $\gg_e$ can be normalized 
such that the Fourier transform extends to a unitary operator 
$$L^2(\Oc)\to L^2(\gg_e),\quad a\mapsto\widehat{a},$$
and its inverse is defined by the usual formula 
(see Lemma~4.1.1. in \cite{Pe94}). 
We shall always consider the predual $\gg_e$ endowed with 
this normalized measure. 

If $f\in\Hc_{-\infty}$ and $\phi\in\Hc_\infty$, or $f,\phi\in\Hc$, 
then we define the corresponding \emph{ambiguity function} 
$$\Ac(f,\phi)=\Ac_\phi f\colon\gg_e\to\CC,\quad (\Ac_\phi f)(X)=(f\mid\pi(\exp_G X)\phi).$$ 
For $\phi\in\Hc_{-\infty}$ and $f\in\Hc_\infty$ we also define 
$(\Ac_\phi f)(X)=\overline{(\phi\mid\pi(\exp_G(-X))f)}$ whenever $X\in\gg_e$. 

It follows by Proposition~\ref{orthog}\eqref{orthog_item1} below that if $f,\phi\in\Hc$, 
then $\Ac_\phi f\in L^2(\gg_e)$, 
so we can use the aforementioned Fourier transform to define 
the corresponding \emph{cross-Wigner distribution} $\Wig(f,\phi)\in L^2(\Oc)$ 
such that 
$\widehat{\Wig(f,\phi)}:=\Ac_\phi f$.
\qed
\end{definition}

The second equality in Proposition~\ref{orthog}\eqref{orthog_item1} below 
could be referred to as the \emph{Moyal identity} since  
that classical identity (see for instance \cite{Gr01}) 
is recovered 
in the special case when $G$ is a simply connected Heisenberg group.

\begin{proposition}\label{orthog}
 The following assertions hold: 
\begin{enumerate}
 \item\label{orthog_item1} 
If $\phi\in\Hc$, then $\Ac_\phi f\in L^2(\gg_e)$. 
We have 
\begin{equation}\label{orthog_eq1}
\begin{aligned}
({\Ac}_{\phi_1}f_1\mid {\Ac}_{\phi_2}f_2)_{L^2(\gg_e)} 
&=(f_1\mid f_2)_{\Hc}\cdot(\phi_2\mid\phi_1)_{\Hc} \\
&=({\Wig}(f_1,\phi_1)\mid {\Wig}(f_2,\phi_2))_{L^2(\Oc)} 
\end{aligned}
\end{equation}
for arbitrary $\phi_1,\phi_2,f_1,f_2\in\Hc$. 
\item\label{orthog_item2} 
If $\phi_0\in\Hc$ with $\Vert\phi_0\Vert=1$, 
then the operator ${\Ac}_{\phi_0}\colon\Hc\to L^2(\gg_e)$, $f\mapsto {\Ac}_{\phi_0} f$, 
is an isometry and we have 
\begin{equation*}
\int\limits_{\gg_e}({\Ac}_{\phi_0}f)(X)\cdot\pi(\exp_G X)\phi\,\de X
=(\phi\mid\phi_0)f
\end{equation*}
for every $\phi\in\Hc_\infty$ and $f\in\Hc$. 
In particular, 
\begin{equation*}
\int\limits_{\gg_e}({\Ac}_{\phi_0} f)(X)\cdot\pi(\exp_G X)\phi_0\,\de X=f
\end{equation*}
for arbitrary $f\in\Hc$. 
\end{enumerate}

\end{proposition}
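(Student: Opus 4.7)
The plan is to derive both parts from the diagonal Moyal identity
\[
\|\Ac_\phi f\|^2_{L^2(\gg_e)}=\|f\|^2_\Hc\,\|\phi\|^2_\Hc \qquad (f,\phi\in\Hc),
\]
after which the remaining assertions will follow by polarization and by Plancherel on the unitary Fourier transform $L^2(\Oc)\to L^2(\gg_e)$ recorded in Definition~\ref{amb}. To prove the diagonal identity, I intend to work in a Kirillov monomial realization of $\pi$ obtained from a Vergne-type polarization $\pg$ at $\xi_0$ built from the Jordan--H\"older sequence fixed in Setting~\ref{predual_sett}. The predual $\gg_e$ serves as a linear complement to $\pg$ in $\gg$, so that the mapping $X\mapsto\pi(\exp_G X)$ restricted to $\gg_e$ acts on $L^2$ of a cross-section by a unitary family of translation-modulation type parametrized by $\gg_e$. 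Direct computation of $\Ac_\phi f(X)=(f\mid\pi(\exp_G X)\phi)$ in this model exhibits it as a Fourier-type transform whose $L^2(\gg_e)$-norm equals $\|f\|\,\|\phi\|$; the normalization of Lebesgue measure on $\gg_e$ adopted in Definition~\ref{amb} is precisely the one that produces this identity without an extra constant factor, which is the orthogonality content of Pedersen's Plancherel theorem (Lemma~4.1.1 in \cite{Pe94}).

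With the diagonal Moyal identity in hand, the first equality in \eqref{orthog_eq1} follows by the standard polarization argument applied separately to the pairs $(f_1,f_2)$ and $(\phi_1,\phi_2)$, while the second equality in \eqref{orthog_eq1} is immediate from the defining relation $\widehat{\Wig(f_j,\phi_j)}=\Ac_{\phi_j}f_j$ and the unitarity of the Fourier transform. For part~(2), the isometry statement is the special case $\phi_1=\phi_2=\phi_0$ of part~(1). The reconstruction formula will be interpreted as a weak vector-valued integral: for $f\in\Hc$, $\phi\in\Hc_\infty$ (smoothness being needed so that $X\mapsto\pi(\exp_G X)\phi$ is well-defined pointwise and $\Ac_\phi v$ lies in $L^2(\gg_e)$ for every $v\in\Hc$), part~(1) gives
\[
\int_{\gg_e}(\Ac_{\phi_0}f)(X)\,(\pi(\exp_G X)\phi\mid v)\,\de X=(\Ac_{\phi_0}f\mid\Ac_\phi v)_{L^2(\gg_e)}=(f\mid v)_\Hc\,(\phi\mid\phi_0)_\Hc,
\]
which identifies the weak integral with the vector $(\phi\mid\phi_0)f\in\Hc$. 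The final formula is then the specialization $\phi=\phi_0$ together with $\|\phi_0\|=1$.

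The main obstacle is the diagonal Moyal identity itself; once it is in place, everything else is a formal exercise in polarization and Fourier--Plancherel duality. The delicate point is the compatibility between the canonical Liouville measure on $\Oc$ and the Lebesgue normalization on $\gg_e$: the identity holds without an extraneous constant only for the specific pair of normalizations adopted in the setting, and establishing this compatibility is the substantive input from Pedersen's theory that cannot be bypassed by purely formal manipulations.
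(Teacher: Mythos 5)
Your reduction of everything to the diagonal identity $\Vert\Ac_\phi f\Vert_{L^2(\gg_e)}=\Vert f\Vert\,\Vert\phi\Vert$, followed by polarization, Fourier--Plancherel for the second equality in \eqref{orthog_eq1}, and the weak vector-valued integral argument for part~(2), is exactly the architecture of the paper's proof. But the diagonal identity itself --- which you correctly identify as the sole substantive point --- is not actually established, and the route you sketch for it does not work as described. First, the predual $\gg_e$ is \emph{not} a linear complement to a Vergne polarization $\pg$ at $\xi_0$: one has $\dim\gg_e=|e|=\dim\Oc=d$, whereas $\dim(\gg/\pg)=d/2$, so the dimensional bookkeeping of your monomial model is off by a factor of two from the start. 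Second, the assertion that a ``direct computation'' in the induced model exhibits $\Ac_\phi f$ as a Fourier-type transform with the exact norm identity is precisely the content of Pedersen's orthogonality theorem for matrix coefficients restricted to $\exp_G(\gg_e)$ (Th.~2.2.7 in \cite{Pe94}), which is a genuinely nontrivial result --- especially when $\pi$ is not square integrable, so that the full matrix coefficient is not in $L^2$ of anything and only its restriction to $\exp_G(\gg_e)$, with the correctly normalized measure, is. Citing Lemma~4.1.1 of \cite{Pe94} does not fill this in: that lemma only normalizes the Fourier transform $L^2(\Oc)\to L^2(\gg_e)$ and says nothing about matrix coefficients.

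The paper's actual reduction is worth noting because it is short once Th.~2.2.7 is granted: for $f,\phi\in\Hc_\infty$ the rank-one operator $A_{f,\phi}=(\cdot\mid\phi)f$ lies in $\Bc(\Hc)_\infty$, one checks $f_\pi^{A_{f,\phi}}(\exp_G X)=(\Ac_\phi f)(-X)$, and the orthogonality relation $\int_{\gg_e}f_\pi^A\overline{f_\pi^B}=\Tr(AB^*)$ applied to $A=A_{f_1,\phi_1}$, $B=A_{f_2,\phi_2}$ yields \eqref{orthog_eq1} directly from $\Tr(A_{f_1,\phi_1}A_{\phi_2,f_2})=(f_1\mid f_2)(\phi_2\mid\phi_1)$. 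This also exposes a second omission in your write-up: the identity is first obtained only on $\Hc_\infty$ (where the rank-one operators are smooth), and a separate density/continuity argument is needed to extend the sesquilinear map $(f,\phi)\mapsto\Ac_\phi f$ to all of $\Hc\times\Hc$ before part~(1) holds as stated. Your treatment of part~(2) is fine and agrees with the paper, which delegates it to the same weak-integral computation.
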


\begin{proof} 
\eqref{orthog_item1}
We first prove that \eqref{orthog_eq1} holds for $\phi_1,\phi_2,f_1,f_2\in\Hc_\infty$. 
Since  $\Bc(\Hc_\infty)$ is contained in the ideal $\Sg_1(\Hc)$ of trace-class operators, 
it makes sense to define 
$$(\forall A\in\Bc(\Hc)_\infty)\quad f_\pi^A\colon G\to\CC,\quad f_\pi^A(x)=\Tr(\pi(x)A).$$
It follows by Th.~2.2.7 in \cite{Pe94} that for the suitably normalized 
Lebesgue measure on $\gg_e$ we have for every $A,B\in\Bc(\Hc)_\infty$,  
\begin{equation}\label{orthog_proof_eq2} 
\int\limits_{\gg_e} f_\pi^A(\exp_G X)\overline{f_\pi^B(\exp_G X)}\de X  =  \Tr(AB^*). 
\end{equation}
We now denote 
$$(\forall f,\phi\in\Hc)\quad A_{f,\phi}=(\cdot\mid\phi)f\in\Bc(\Hc) $$
and recall that for arbitrary $f,f_1,f_2,\phi,\phi_1,\phi_2\in\Hc$ we have
$$A_{f,\phi}^*=A_{\phi,f},\ 
\Tr(A_{f,\phi})=(f\mid\phi),\ \text{and}\ 
A_{f_1,\phi_1}A_{f_2,\phi_2}=A_{f_1,(\phi_1\mid f_2)\phi_2}
=A_{(f_2\mid \phi_1)f_1,\phi_2}. $$
It then easily follows that if $f,\phi\in\Hc_\infty$, 
then $A_{f,\phi}\in\Bc(\Hc)_\infty$ and 
for arbitrary $X\in\gg_e$ we have 
$$\begin{aligned}
f_\pi^{A_{f,\phi}}(\exp_G X)
&=\Tr(\pi(\exp_G X)A_{f,\phi})
=\Tr(A_{\pi(\exp_G X)f,\phi})
=(\pi(\exp_G X)f\mid\phi) \\
&=(f\mid\pi(\exp_G(-X))\phi),
\end{aligned}$$
whence 
\begin{equation}\label{orthog_proof_eq3}
(\forall X\in\gg_e)\quad f_\pi^{A_{f,\phi}}(\exp_G X)=(\Ac_\phi f)(-X). 
\end{equation}
Now, by using \eqref{orthog_proof_eq2} for $A:=A_{f_1,\phi_1}$ and $B:=A_{f_2,\phi_2}$, 
we get 
$$\begin{aligned}
({\Ac}_{\phi_1}f_1\mid {\Ac}_{\phi_2}f_2)_{L^2(\gg_e)} 
&=\Tr(A_{f_1,\phi_1}A_{f_2,\phi_2}^*) 
=\Tr(A_{f_1,\phi_1}A_{\phi_2,f_2}) 
=\Tr(A_{f_1,(\phi_1\mid\phi_2)f_2}) \\
&=(f_1\mid(\phi_1\mid\phi_2)f_2) 
=(f_1\mid f_2)_{\Hc}\cdot(\phi_2\mid\phi_1)_{\Hc} 
\end{aligned}$$
The second part of \eqref{orthog_eq1} then follows since 
the Fourier transform $L^2(\Oc)\to L^2(\gg_e)$ is a unitary operator, 
as we already mentioned in Definition~\ref{amb}. 

The extension of \eqref{orthog_eq1} from $\Hc_\infty$ to $\Hc$ 
proceeds by a density argument. 
First note that by \eqref{orthog_eq1} for $\phi_1=\phi_2=:\phi\in\Hc_\infty$ 
and $f_1=f_2=:f\in\Hc_\infty$ 
we get $\Vert\Ac_\phi f\Vert=\Vert\phi\Vert\cdot\Vert f\Vert$. 
Since $\Hc_\infty$ is dense in $\Hc$, it then follows that 
the sesquilinear mapping $\Hc_\infty\times\Hc_\infty\to\Hc$, $(f,\phi)\mapsto\Ac_\phi f$
extends uniquely to a sesquilinear mapping $\Hc\times\Hc\to\Hc$ satisfying 
\begin{equation}\label{orthog_proof_eq4}
(\forall f,\phi\in\Hc)\quad  \Vert\Ac_\phi f\Vert=\Vert\phi\Vert\cdot\Vert f\Vert.
\end{equation}
Now the first part of \eqref{orthog_eq1} follows as a polarization of \eqref{orthog_proof_eq4}, 
and then the second part follows by using 
the Fourier transform $L^2(\Oc)\to L^2(\gg_e)$ as above. 

\eqref{orthog_item2} 
It follows at once by Assertion~\eqref{orthog_item1} that 
the operator ${\Ac}_{\phi_0}\colon\Hc\to L^2(\gg_e)$ 
is an isometry if $\Vert\phi_0\Vert=1$. 
The other properties then follow immediately; 
see for instance Proposition~2.11 in \cite{Fue05}. 
\end{proof}

We now draw some useful consequences of Proposition~\ref{orthog}. 
We emphasize that 
Assertion~\eqref{orthog_distrib_item3} in the following corollary  
in the special case of square-integrable representations 
reduces to a theorem of \cite{Co84} and \cite{CM96}. 
One thus recovers Th.~2.3 in \cite{GZ01} 
in the case of the Schr\"odinger representation 
of the Heisenberg group. 

\begin{corollary}\label{orthog_distrib}
If $\phi_0\in\Hc_\infty$ with $\Vert\phi_0\Vert=1$, 
then the following assertions hold: 
\begin{enumerate}
\item\label{orthog_distrib_item1} 
For every $f\in\Hc_{-\infty}$ we have 
\begin{equation}\label{orthog_distrib_eq1}
\int\limits_{\gg_e}({\Ac}_{\phi_0} f)(X)\cdot\pi(\exp_G X)\phi_0\,\de X=f
\end{equation} 
where the integral is convergent in the weak$^*$-topology of $\Hc_{-\infty}$. 
\item\label{orthog_distrib_item2} 
If $f\in\Hc_\infty$, then the above integral converges in 
the Fr\'echet topology of $\Hc_\infty$. 
\item\label{orthog_distrib_item3} 
If $f\in\Hc_{-\infty}$, then we have 
$f\in\Hc_\infty$ if and only if $\Ac_{\phi_0}f\in\Sc(\gg_e)$. 
\end{enumerate}
\end{corollary}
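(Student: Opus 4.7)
The plan is to prove the three assertions in the order ``(3) only if'', (2), (1), ``(3) if'', with the serious work concentrated in the first step. The guiding idea is that, for $\phi_0\in\Hc_\infty$ fixed, the transform $\Ac_{\phi_0}$, which is an $L^2$-isometry by Proposition~\ref{orthog}\eqref{orthog_item2}, should restrict to a continuous map $\Hc_\infty\to\Sc(\gg_e)$; given this, the weak$^*$ inversion in $\Hc_{-\infty}$ and the characterisation of $\Hc_\infty$ by Schwartz decay of the ambiguity function follow from the $\Hc$-inversion formula of Proposition~\ref{orthog}\eqref{orthog_item2} by essentially formal manipulations.

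The key technical step, and the main obstacle, is to show that for every $\psi\in\Hc_\infty$ the function $\Ac_{\phi_0}\psi$ belongs to $\Sc(\gg_e)$, with continuous dependence on $\psi$. The idea is that differentiation of $(\Ac_{\phi_0}\psi)(X)=(\psi\mid\pi(\exp_G X)\phi_0)$ in $X\in\gg_e$ translates, via the product rule and the derivative of $X\mapsto\pi(\exp_G X)$, into applying $\de\pi$ of some $u\in\U(\gg_\CC)$ to $\phi_0$, while multiplication by a polynomial in the Jordan-H\"older coordinates $X_j$, $j\in e$, is traded, using the Fourier transform on the coadjoint orbit $\Oc$, for differential operators acting on the other argument. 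Both operations preserve $\Hc_\infty$, and Cauchy-Schwarz in $\Hc$ provides the required uniform bounds. This is the only step that uses the nilpotency of $G$ in an essential way: the Jordan-H\"older basis adapted to the jump indices $e$ and Pedersen's description of smooth vectors from Section~4 of~\cite{Pe94} are all required here.

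Given the lemma, assertion (2) follows from the commutation
\[
\de\pi(u)\pi(\exp_G X)\phi_0=\pi(\exp_G X)\de\pi\bigl(\Ad_G(\exp_G(-X))u\bigr)\phi_0
\]
combined with unitarity of $\pi$: the right-hand side has $\Hc$-norm polynomial in $X$ (by nilpotency of $\ad_\gg$), so absorption against the Schwartz decay of $\Ac_{\phi_0}f\in\Sc(\gg_e)$ shows that the inversion integral converges in each seminorm $\Vert\de\pi(u)\cdot\Vert_\Hc$ of $\Hc_\infty$; the limit equals $f$ by Proposition~\ref{orthog}\eqref{orthog_item2}. For (1), applying (2) to an arbitrary $\psi\in\Hc_\infty$ gives an expansion of $\psi$ converging in $\Hc_\infty$, which may then be paired with $f\in\Hc_{-\infty}$ to obtain
\[
(f\mid\psi)=\int\limits_{\gg_e}(\Ac_{\phi_0}f)(X)\overline{(\Ac_{\phi_0}\psi)(X)}\,\de X;
\]
this is precisely the weak$^*$-pairing of the right-hand side of \eqref{orthog_distrib_eq1} against $\psi$, and integrability is ensured by $\Ac_{\phi_0}\psi\in\Sc(\gg_e)$ together with the polynomial growth of $\Ac_{\phi_0}f$ coming from the continuity of $f$ on~$\Hc_\infty$.

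Finally, the ``only if'' of (3) is the lemma specialised to $\psi=f$. For the ``if'' direction, the same polynomial bound on $\Vert\de\pi(u)\pi(\exp_G X)\phi_0\Vert_\Hc$ combined with the Schwartz decay of $\Ac_{\phi_0}f$ shows that $\int_{\gg_e}(\Ac_{\phi_0}f)(X)\pi(\exp_G X)\phi_0\,\de X$ converges in the Fr\'echet topology of $\Hc_\infty$; since $\Hc_\infty$-convergence implies weak$^*$-convergence in $\Hc_{-\infty}$, part (1) identifies this limit with $f$, and hence $f\in\Hc_\infty$. The whole argument thus reduces, modulo functional-analytic bookkeeping, to the lemma in the first step, which is where the representation theory of nilpotent Lie groups enters crucially.
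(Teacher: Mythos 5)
Your proposal is correct and follows essentially the same route as the paper: the whole corollary is made to rest on the fact that $\Ac_{\phi_0}$ maps $\Hc_\infty$ into $\Sc(\gg_e)$ together with the polynomial growth of $X\mapsto\pi(\exp_G X)\phi_0$ and its derivatives, with (1) obtained from (2) by pairing against test vectors and the ``if'' direction of (3) obtained from the $\Hc_\infty$-convergence of the inversion integral. The only difference is that the paper does not prove the key mapping property $\Ac_{\phi_0}(\Hc_\infty)\subseteq\Sc(\gg_e)$ but simply quotes it as Th.~2.2.6 of \cite{Pe94}, whereas you sketch a proof of it; your sketch is consistent with Pedersen's argument, so nothing is lost.
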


\begin{proof}
If $f\in\Hc_{-\infty}$, we have to prove that  
$\int\limits_{\gg_e}({\Ac}_{\phi_0} f)(X)\cdot(\pi(\exp_G X)\phi_0\mid\phi)\,\de X=(f\mid\phi)$, 
for every $\phi\in\Hc_\infty$, 
that is, 
\begin{equation*} 
\int\limits_{\gg_e}(f\mid\pi(\exp_G X)\phi_0)\cdot(\pi(\exp_G X)\phi_0\mid\phi)\,\de X=(f\mid\phi). 
\end{equation*}
Since $(f\mid\cdot)$ is an antilinear continuous functional, 
the above equation will follow as soon as we have proved 
that for $\phi\in\Hc_\infty$ we have 
$$\int\limits_{\gg_e}(\phi\mid\pi(\exp_G X)\phi_0)\pi(\exp_G X)\phi_0\,\de X=\phi$$
with an integral that converges in the topology of $\Hc_\infty$. 
Note that this is precisely Assertion~\eqref{orthog_distrib_item2}. 
To prove it, we just have to use 
Proposition~\ref{orthog}\eqref{orthog_item2} along with the fact that 
for $\phi,\phi_0\in\Hc_\infty$ 
the function $X\mapsto(\phi\mid\pi(\exp_G X)\phi_0)=(\Ac_{\phi_0}\phi)(X)$ 
belongs to $\Sc(\gg_e)$ (see Th.~2.2.6 in \cite{Pe94}) 
while the function $X\mapsto\pi(\exp_G X)\phi_0$ 
and all its partial derivatives have polynomial growth. 

For Assertion~\eqref{orthog_distrib_item3}, 
we have just noted that if $f\in\Hc_\infty$ then 
$\Ac_{\phi_0}f\in\Sc(\gg_e)$ as a direct consequence of 
Th.~2.2.6 in \cite{Pe94}. 
Conversely, if $f\in\Hc_{-\infty}$ has the property 
$\Ac_{\phi_0}f\in\Sc(\gg_e)$, then the fact that all 
the partial derivatives of $X\mapsto\pi(\exp_G X)\phi_0$ have polynomial growth 
implies at once that the integral in \eqref{orthog_distrib_eq1} 
is convergent in the Fr\'echet space $\Hc_\infty$, 
hence Assertion~\eqref{orthog_distrib_item1} shows that 
actually $f\in\Hc_\infty$.
\end{proof}

\begin{definition}\label{calc_def}
\normalfont 
The \emph{Weyl-Pedersen calculus} $\Op^\pi(\cdot)$ for the unitary representation~$\pi$ is defined 
for every $a\in\Sc(\Oc)$ by 
\begin{equation}\label{calc_formula}
\Op^\pi(a)=\int\limits_{\gg_e}\widehat{a}(X)\pi(\exp_GX)\de X\in\Bc(\Hc). 
\end{equation}
This definition can be extended to an arbitrary tempered distribution $a\in\Sc'(\Oc)$ by using  
Th.~4.1.4 and Th.~2.2.7 in \cite{Pe94}
to define an unbounded operator 
$\Op^\pi(a)$ such that  
\begin{equation}\label{duality}
(\forall b\in\Sc(\Oc))\quad \Tr(\Op^\pi(a)\Op^\pi(b))=\hake{a,b}, 
\end{equation}
where we recall that $\hake{\cdot,\cdot}\colon\Sc'(\Oc)\times\Sc(\Oc)\to\CC$ 
stands for the usual pairing between 
the tempered distributions and the Schwartz functions. 
We say that $a\in\Sc'(\Oc)$ is 
the \emph{symbol} of the operator $\Op^\pi(a)$.
\qed 
\end{definition}

We now record some basic properties 
of the Weyl-Pedersen calculus constructed in Definition~\ref{calc_def}. 
These are actually direct consequences of Proposition~\ref{orthog}\eqref{orthog_item1}. 

\begin{corollary}\label{pseudo_prop}
The following assertions hold: 
\begin{enumerate}
\item\label{pseudo_prop_item2} 
For each $a\in\Sc(\Oc)$ we have 
$$
(\Op^\pi(a)\phi\mid f)_{\Hc}=(\widehat{a}\mid\Ac_\phi f)_{L^2(\gg_e)}
=(a\mid\Wig(f,\phi))_{L^2(\Oc)}$$
whenever $\phi,f\in\Hc$. 
Similar equalities hold if $a\in\Sc'(\Oc)$ and $\phi,f\in\Hc_\infty$. 
\item\label{pseudo_prop_item3} 
If $\phi_1,\phi_2\in\Hc_\infty$ and $a:=\Wig(\phi_1,\phi_2)\in\Sc(\Oc)$, 
then $\Op^\pi(a)$ is a rank-one operator, namely 
$\Op^\pi(a)=(\cdot \mid\phi_2) \phi_1$.  
\end{enumerate}
\end{corollary}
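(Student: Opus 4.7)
The plan is to establish \eqref{pseudo_prop_item2} for Schwartz symbols first, then derive \eqref{pseudo_prop_item3} via the Moyal identity, and finally return to the distributional version of \eqref{pseudo_prop_item2}, which will need \eqref{pseudo_prop_item3} as input. For $a\in\Sc(\Oc)$ and $\phi,f\in\Hc$, I would substitute~\eqref{calc_formula} into $(\Op^\pi(a)\phi\mid f)_{\Hc}$ and bring the continuous antilinear functional $(\cdot\mid f)_{\Hc}$ under the integral; this interchange is legitimate because $\widehat a\in\Sc(\gg_e)\subseteq L^1(\gg_e)$ and $\pi$ is unitary. Using the definition $(\Ac_\phi f)(X)=(f\mid\pi(\exp_G X)\phi)$, this yields
\begin{equation*}
(\Op^\pi(a)\phi\mid f)_{\Hc}
=\int_{\gg_e}\widehat a(X)\,\overline{(\Ac_\phi f)(X)}\,\de X
=(\widehat a\mid\Ac_\phi f)_{L^2(\gg_e)}.
\end{equation*}
The second equality of \eqref{pseudo_prop_item2} then follows from $\widehat{\Wig(f,\phi)}=\Ac_\phi f$ and the Plancherel identity for the unitary Fourier transform $L^2(\Oc)\to L^2(\gg_e)$ recalled in Definition~\ref{amb}.

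For \eqref{pseudo_prop_item3}, set $a:=\Wig(\phi_1,\phi_2)$, so that $\widehat a=\Ac_{\phi_2}\phi_1$, and apply the formula just proved with the substitution $(\phi,f)\mapsto(f,g)$ to compute, for arbitrary $f,g\in\Hc$,
\begin{equation*}
(\Op^\pi(a)f\mid g)_{\Hc}=(\Ac_{\phi_2}\phi_1\mid\Ac_f g)_{L^2(\gg_e)}.
\end{equation*}
Proposition~\ref{orthog}\eqref{orthog_item1} evaluates the right-hand side to $(\phi_1\mid g)_{\Hc}\,(f\mid\phi_2)_{\Hc}=((f\mid\phi_2)\phi_1\mid g)_{\Hc}$, and arbitrariness of $g\in\Hc$ forces $\Op^\pi(a)f=(f\mid\phi_2)\phi_1$, as required.

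Finally, for the distributional case of \eqref{pseudo_prop_item2}, fix $a\in\Sc'(\Oc)$ and $\phi,f\in\Hc_\infty$. By Corollary~\ref{orthog_distrib}\eqref{orthog_distrib_item3} one has $\Ac_\phi f\in\Sc(\gg_e)$ and hence $\Wig(f,\phi)\in\Sc(\Oc)$, so the two ``inner products'' in the statement are to be read as the associated tempered-distribution pairings (with appropriate conjugation). Part \eqref{pseudo_prop_item3}, applied to $b:=\Wig(\phi,f)\in\Sc(\Oc)$, identifies the rank-one operator $(\cdot\mid f)\phi$ with $\Op^\pi(b)$; composing with $\Op^\pi(a)$ on the left produces the rank-one operator $(\cdot\mid f)\Op^\pi(a)\phi$, whose trace equals $(\Op^\pi(a)\phi\mid f)_{\Hc}$, whereas the trace duality~\eqref{duality} evaluates that same trace as $\hake{a,\Wig(\phi,f)}$. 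Since a direct Fourier computation gives $\Wig(\phi,f)=\overline{\Wig(f,\phi)}$, this is precisely the distributional form of the second equality of \eqref{pseudo_prop_item2}, and Plancherel again delivers the first. The main obstacle lies in exactly this final step: because $\Op^\pi(a)$ is now unbounded and~\eqref{calc_formula} is no longer absolutely convergent, one cannot argue directly and must instead route through the rank-one identification of \eqref{pseudo_prop_item3} and the defining trace duality~\eqref{duality}, which dictates the order in which the three assertions should be proved.
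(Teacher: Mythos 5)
Your proof is correct and follows essentially the same route as the paper: the first identity in \eqref{pseudo_prop_item2} comes from substituting \eqref{calc_formula} and recognizing the ambiguity function, the second from Plancherel for the orbital Fourier transform of Definition~\ref{amb}, and \eqref{pseudo_prop_item3} from the Moyal identity of Proposition~\ref{orthog}\eqref{orthog_item1}. The paper dispatches the distributional case of \eqref{pseudo_prop_item2} with only the phrase ``similar equalities hold''; your detour through the rank-one identification of \eqref{pseudo_prop_item3}, the trace duality \eqref{duality}, and the relation $\Wig(\phi,f)=\overline{\Wig(f,\phi)}$ is a correct way to make that precise, and your ordering (Schwartz case, then \eqref{pseudo_prop_item3}, then the distributional case) avoids any circularity.
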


\begin{proof}
Assertion~\eqref{pseudo_prop_item2} is a consequence of formula~\eqref{calc_formula} 
along with Definition~\ref{amb}. 
Then Assertion~\eqref{pseudo_prop_item3} follows by Assertion~\eqref{pseudo_prop_item2} 
along with Proposition~\ref{orthog}\eqref{orthog_item1}. 
In fact, 
$$\begin{aligned}
(\Op^\pi(\Wig(\phi_1,\phi_2))f\mid\phi)
&=(\Wig(\phi_1,\phi_2)\mid\Wig(\phi,f))
=(\phi_1\mid\phi)\cdot(f\mid\phi_2) \\
&=((f\mid\phi_2)\phi_1 \mid\phi) 
\end{aligned}$$
for arbitrary $\phi\in\Hc$. 
\end{proof}

\begin{remark}\label{wig_ext}
\normalfont
We can define the cross-Wigner distribution $\Wig(f_1,f_2)\in\Sc'(\Oc)$ 
for arbitrary $f_1,f_2\in\Hc_{-\infty}$ as follows. 
An application of Th.~1.3(b) in \cite{Ca76} shows 
that if $A\in\Bc(\Hc)_\infty$  and $f\in\Hc_{-\infty}$,  
then $Af\in\Hc_\infty$, in the sense that there exists a smooth vector 
denoted $Af$ such that for every $\phi\in\Hc_\infty$ we have 
$(f\mid A^*\phi)=(Af\mid\phi)$. 
Moreover, we thus get a continuous linear map $A\colon\Hc_{-\infty}\to\Hc_\infty$ whose restriction to $\Hc$ 
is the original operator $A\in\Bc(\Hc)_\infty$.
Then for $f_1,f_2\in\Hc_{-\infty}$ we can define 
the continuous antilinear functional
$$T_{f_1,f_2}\colon\Bc(\Hc)_\infty\to\CC,\quad 
T_{f_1,f_2}(A):=(f_1\mid Af_2).$$
That is, $T_{f_1,f_2}\in\Bc(\Hc)_\infty^*$, 
and then Th.~4.1.4(5) in \cite{Pe94} shows that 
there exists a unique distribution $a_{f_1,f_2}\in\Sc'(\Oc)$ 
such that $\Op^\pi(a_{f_1,f_2})=T_{f_1,f_2}$. 
Now define 
$$\Wig(f_1,f_2):=a_{f_1,f_2}.$$ 
We can consider the rank-one operator 
$S_{f_1,f_2}:=(\cdot\mid f_2)f_1\colon \Hc_\infty\to\Hc_{-\infty}$ and 
for arbitrary $A\in\Bc(\Hc)_\infty$ thought of as 
a continuous linear map $A\colon\Hc_{-\infty}\to\Hc_\infty$ as above 
we have 
$$\Tr(S_{f_1,f_2}A)=(f_1\mid Af_2)=T_{f_1,f_2}(A).$$ 
Thus the trace duality pairing allows us to identify 
the functional $T_{f_1,f_2}\in\Bc(\Hc)_\infty^*$ 
with the rank-one operator $(\cdot\mid f_2)f_1$, 
and then we can write 
\begin{equation}\label{wig_ext_eq1}
(\forall f_1,f_2\in\Hc_{-\infty})\quad 
\Op^\pi(\Wig(f_1,f_2))=(\cdot\mid f_2)f_1.
\end{equation}
In particular, it follows that the above extension of 
the cross-Wigner distribution to a mapping 
$\Wig(\cdot,\cdot)\colon\Hc_{-\infty}\times\Hc_{-\infty}\to\Sc'(\Oc)$ 
allows us to generalize 
the assertion of Corollary~\ref{pseudo_prop}\eqref{pseudo_prop_item3} 
to arbitrary $\phi_1,\phi_2\in\Hc_{-\infty}$. 
\qed
\end{remark}

\begin{definition}\label{moyal}
\normalfont
Recall from Th.~4.1.4(5) in \cite{Pe94} that the Weyl-Pedersen calculus 
$\Op^\pi\colon\Sc'(\Oc)\to\Bc(\Hc)_\infty^*$ 
is a linear isomorphism and a weak$^*$-homeomorphism. 
We introduce the linear space 
$$\Si(\Oc):=\{a\in\Sc'(\Oc)\mid \Op^\pi(a)\in\Bc(\Hc)\} $$
(see \eqref{smoothness_eq1}). 
Then the mapping $\Op^\pi$ induces a linear isomorphism 
$\Si(\Oc)\to\Bc(\Hc)$, hence there exists an uniquely 
defined bilinear associative \emph{Moyal product} 
$$\Si(\Oc)\times\Si(\Oc)\to\Si(\Oc), \quad (a,b)\mapsto a\# b $$
such that 
$$(\forall a,b\in\Si(\Oc))\quad \Op^\pi(a\# b)=\Op^\pi(a)\Op^\pi(b). $$
The space of distributions $\Si(\Oc)$ is thus made into 
a $W^*$-algebra such that 
the mapping $\Si(\Oc)\to\Bc(\Hc)$, $a\mapsto\Op^\pi(a)$  
is a $*$-isomorphism. 
\qed
\end{definition}

With Definition~\ref{moyal} at hand, we can say that 
one of the main problems addressed in the present paper 
is to describe large classes of distributions 
belonging in the space~$\Si(\Oc)$. 

\begin{example}\label{exs}
\normalfont
Here are some examples of distributions in $\Si(\Oc)$
which are already available.
\begin{enumerate}
\item\label{exs_item1} 
It follows at once by \eqref{calc_formula} and \eqref{duality} 
that 
$$\{a\in\Sc'(\Oc)\mid \widehat{a}\in L^1(\gg_e)\}\subseteq\Si(\Oc).$$ 
\item\label{exs_item2}  
The Schwartz space $\Sc(\Oc)$ is a $*$-subalgebra of $\Si(\Oc)$ 
and the mapping $\Op^\pi\colon\Sc(\Oc)\to\Bc(\Hc)_\infty$ 
is an algebra $*$-isomorphism by Th.~4.1.4 in~\cite{Pe94}. 
\item\label{exs_item3}  
The space $L^2(\Oc)$ is a $*$-subalgebra of $\Si(\Oc)$,  
and $\Op^\pi\colon L^2(\Oc)\to\Sg_2(\Hc)$ 
is a unitary operator and an algebra $*$-isomorphism 
as an easy consequence of Th.~4.1.4 in~\cite{Pe94}; 
see also \cite{Ma07}. 
\item\label{exs_item4}  
For every $Y\in\gg$ we have $\ee^{\ie\hake{\cdot,Y}}\in\Si(\Oc)$ 
since it follows at once by \eqref{calc_formula} and \eqref{duality} 
that $\Op^\pi(\ee^{\ie\hake{\cdot,Y}})=\pi(\exp_G Y)$. 
\end{enumerate}
See also Corollary~\ref{C3} for 
the important example~$M^{\infty,1}_\Phi(\pi^\#)\hookrightarrow \Si(\Oc)$. 
\qed
\end{example}

\subsection{Modulation spaces}

\begin{definition}\label{modular_def}
\normalfont
Let $\phi\in\Hc_\infty\setminus\{0\}$ be fixed and 
assume that we have a direct sum decomposition 
$\gg_e=\gg_e^1\dotplus\gg_e^2$.

Then let $1\le r,s\le\infty$ and   
for arbitrary $f\in\Hc_{-\infty}$ define 
$$\Vert f\Vert_{M^{r,s}_\phi}
=\Bigl(\int\limits_{\gg_e^2}
\Bigl(\int\limits_{\gg_e^1}
\vert(\Ac_\phi f)(X_1,X_2)\vert^r\de X_1 \Bigr)^{s/r}
\de X_2\Bigr)^{1/s}\in[0,\infty] $$
with the usual conventions if $r$ or $s$ is infinite. 
Then we call the space 
$$M^{r,s}_\phi(\pi):=\{f\in\Hc_{-\infty}\mid\Vert f\Vert_{M^{r,s}_\phi}<\infty\}$$ 
a \emph{modulation space} for the irreducible unitary representation $\pi\colon G\to\Bc(\Hc)$ 
with respect to the decomposition $\gg_e\simeq\gg_e^1\times\gg_e^2$ 
and the \emph{window vector} $\phi\in\Hc_\infty\setminus\{0\}$. 
\qed
\end{definition}

\begin{remark}\label{modular_mixed}
\normalfont
Assume the setting of Definition~\ref{modular_def} and recall  
the \emph{mixed-norm space} $L^{r,s}(\gg_e^1\times\gg_e^2)$ 
consisting of the (equivalence classes of) 
Lebesgue measurable functions $\Theta\colon \gg_e^1\times\gg_e^2\to\CC$ 
such that 
$$\Vert\Theta\Vert_{L^{r,s}}
:=\Bigl(\int\limits_{\gg_e^2}
\Bigl(\int\limits_{\gg_e^1}
\vert(\Theta(X_1,X_2)\vert^r\de X_1 \Bigr)^{s/r}
\de X_2\Bigr)^{1/s}<\infty $$
(cf.~\cite{Gr01}). 
It is clear that 
$M^{r,s}_\phi(\pi)=\{f\in\Hc_{-\infty}\mid \Ac_\phi f\in L^{r,s}(\gg_e^1\times\gg_e^2)\}$. 
\qed
\end{remark}

\begin{example}\label{mod_L2}
\normalfont
For any choice of $\phi\in\Hc_\infty\setminus\{0\}$ in Definition~\ref{modular_def} we have 
$$M^{2,2}_\phi(\pi)=\Hc.$$
Indeed, this equality holds since 
$\Vert\Ac_\phi f\Vert_{L^2(\gg_e)}=\Vert\phi\Vert\cdot\Vert f\Vert$ 
for every $f\in\Hc$ 
(see \eqref{orthog_proof_eq4} in the proof of Proposition~\ref{orthog} above). 
\qed
\end{example}

\subsection{Continuity of Weyl-Pedersen calculus on modulation spaces}

In the following lemma we use notation introduced in Example~\ref{sd_ex2}\eqref{sd_ex2_item4} and Remark~\ref{smoothness}. 

\begin{lemma}\label{pi_bar}
Let $G$ be any Lie group with a unitary irreducible representation 
$\pi\colon G\to\Bc(\Hc)$  and define 
$$\bar\pi\colon G\ltimes G\to\Bc(\Sg_2(\Hc)),\quad \bar\pi(g,h)T=\pi(gh)T\pi(g)^{-1}.$$
Then the following assertions hold: 
\begin{enumerate}
 \item\label{pi_bar_item1} 
The diagram 
$$\xymatrix{
 G\ltimes G\ar[r]^{\bar\pi}  & \Bc(\Sg_2(\Hc))   \\
  G\times G \ar[u]^{\mu} \ar[ur]_{\pi^{\otimes 2}}                 }
$$
is commutative and 
$\bar\pi$ is a unitary irreducible representation of $G\ltimes G$. 
\item\label{pi_bar_item3} 
The space of smooth vectors for the representation $\bar\pi$ 
is $\Bc(\Hc)_\infty$.
\item\label{pi_bar_item4} 
Let us denote by $\bar\gg=\gg\ltimes\gg$ 
and define 
$$\bar X_j=
\begin{cases}
\hfill (X_j,0) &\text{ for }j=1,\dots,n,\\
(X_{j-n},X_{j-n}) &\text{ for }j=n+1,\dots,2n.
\end{cases} 
$$
Then $\bar X_1,\dots,\bar X_{2n}$ is a Jordan-H\"older basis 
in $\bar\gg$ and the corresponding predual for 
the coadjoint orbit $\bar\Oc\subseteq\bar\gg^*$ associated with 
the representation $\bar\pi$ is $$\bar\gg_{\bar e}=\gg_e\times\gg_e\subseteq\bar\gg,$$
where $\bar e$ is the set of jump indices for~$\bar\Oc$. 
\end{enumerate}
\end{lemma}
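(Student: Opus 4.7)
The plan is to leverage the isomorphism $\mu$ from Example~\ref{sd_ex2}\eqref{sd_ex2_item4} to reduce everything about $\bar\pi$ to known facts about $\pi^{\otimes 2}$, and then read off the coadjoint-orbit data in part~\eqref{pi_bar_item4} from the corresponding data for $\pi$.

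For part~\eqref{pi_bar_item1}, commutativity of the diagram $\bar\pi\circ\mu=\pi^{\otimes 2}$ is a direct substitution using the definitions of $\mu$, $\bar\pi$, and $\pi^{\otimes 2}$. Given this, $\bar\pi=\pi^{\otimes 2}\circ\mu^{-1}$ is automatically a strongly continuous unitary representation since $\mu$ is a Lie group diffeomorphism and $\pi^{\otimes 2}$ has these properties (Remark~\ref{smoothness}). Irreducibility of $\pi^{\otimes 2}$ is the standard fact that under $\Sg_2(\Hc)\simeq\Hc\otimes\overline{\Hc}$ it acts as the external tensor product $\pi\otimes\bar\pi$, whose commutant is scalar by Schur applied to $\pi$; transport along $\mu$ then yields irreducibility of $\bar\pi$. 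Part~\eqref{pi_bar_item3} follows at once from the same diagram: since $\mu$ is a diffeomorphism, it identifies $\Ci(G\ltimes G,\Sg_2(\Hc))$ with $\Ci(G\times G,\Sg_2(\Hc))$ by pullback, so the smooth vectors of $\bar\pi$ coincide with those of $\pi^{\otimes 2}$, which by Remark~\ref{smoothness} form the space $\Bc(\Hc)_\infty$.

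For part~\eqref{pi_bar_item4}, I would first identify $\bar\Oc$ and its isotropy at a well-chosen base point. The coadjoint orbit of the external tensor product $\pi\otimes\bar\pi$ as a representation of $G\times G$ is $\Oc\times(-\Oc)\subseteq\gg^*\times\gg^*$, and the dual of $\Lie(\mu)\colon(X,Y)\mapsto(X+Y,X)$ is $\Lie(\mu)^*(\eta_1,\eta_2)=(\eta_1+\eta_2,\eta_1)$, so $\bar\Oc=(\Lie(\mu)^*)^{-1}(\Oc\times(-\Oc))$. Picking $\bar\xi_0\in\bar\Oc$ corresponding to $(\xi_0,-\xi_0)$ and applying the bracket formula~\eqref{sd_bracket}, a short computation gives the isotropy
\[
\bar\gg_{\bar\xi_0}=\gg_{\xi_0}\times\gg_{\xi_0}.
\]
Next, the subspaces $\bar\gg_j:=\spa\{\bar X_1,\dots,\bar X_j\}$ work out to
\[
\bar\gg_j=\gg_j\times\{0\}\ \text{for } j\le n,\qquad \bar\gg_{n+k}=\gg\times\gg_k\ \text{for } 1\le k\le n,
\]
where the second identity uses $(X_k,X_k)=(X_k,0)+(0,X_k)$. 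Testing $\bar\gg_j\not\subseteq\bar\gg_{j-1}+\bar\gg_{\bar\xi_0}$ against $\bar\gg_{\bar\xi_0}=\gg_{\xi_0}\times\gg_{\xi_0}$ reduces in each of the two ranges to $\gg_k\not\subseteq\gg_{k-1}+\gg_{\xi_0}$ for an appropriate $k$, giving $\bar e=e\cup\{n+j\mid j\in e\}$. Expanding $\bar\gg_{\bar e}=\spa\{(X_j,0)\mid j\in e\}+\spa\{(X_k,X_k)\mid k\in e\}$ and using once more $(X_k,X_k)=(X_k,0)+(0,X_k)$ yields $\bar\gg_{\bar e}=\gg_e\times\gg_e$, as claimed.

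The main obstacle will be the orbit-and-isotropy step: since $\bar\pi$ is defined intrinsically via the semidirect product rather than directly as a tensor product, one has to trace carefully through $\Lie(\mu)^*$ and the bracket formula~\eqref{sd_bracket} to arrive at the clean identification $\bar\gg_{\bar\xi_0}=\gg_{\xi_0}\times\gg_{\xi_0}$. Once that identity is set up, the jump-index comparison and the final predual identification become essentially bookkeeping with the Jordan--H\"older basis $\{\bar X_j\}$.
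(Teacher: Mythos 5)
Your proposal is correct and follows essentially the same route as the paper: reduce everything about $\bar\pi$ to $\pi^{\otimes 2}$ via the isomorphism $\mu$, and obtain part~\eqref{pi_bar_item4} by transporting the Jordan--H\"older flag and coadjoint-orbit data of the direct product through $\Lie(\mu)$. The only (harmless) differences are that the paper writes out the commutant argument for irreducibility of $\pi^{\otimes 2}$ where you cite the standard tensor-product fact, and that you recompute the isotropy algebra and jump indices explicitly in $\gg\ltimes\gg$ where the paper simply invokes the transport (your $\Oc\times(-\Oc)$ versus the paper's $\Oc\times\Oc$ is immaterial here since $\gg_{\xi}=\gg_{-\xi}$, so the jump indices and predual agree).
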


\begin{proof}
\eqref{pi_bar_item1} 
It is clear that the diagram is commutative, and then 
the mapping $\bar\pi$ is a representation since $\pi^{\otimes 2}$ is a representation 
and $\mu\colon G\times G\to G\ltimes G$ is a group isomorphism. 
It is well-known that the representation $\pi^{\otimes 2}$ 
is irreducible, hence $\bar\pi$ is irreducible as well. 
For the sake of completeness, we recall the corresponding reasoning. 
Let arbitrary $\Ac\in\Bc(\Sg_2(\Hc))$ satisfying 
\begin{equation}\label{com}
(\forall (g,h)\in G\ltimes G)\quad \Ac\bar\pi(g,h)=\bar\pi(g,h)\Ac. 
\end{equation}
We have to show that $\Ac$ is a scalar multiple of the identity operator on $\Sg_2(\Hc)$. 
For that purpose, let us define the operators 
$L_B,R_B\colon\Sg_2(\Hc)\to\Sg_2(\Hc)$ by $L_BX=BX$ and $R_BX=XB$ 
for $X,B\in\Bc(\Hc)$. 
Note that if $h\in G$, then $\bar\pi(\1,h)=L_{\pi(h)}$. 
It then follows by~\eqref{com} that $\Ac L_{\pi(h)}=L_{\pi(h)}\Ac$ for every $h\in G$. 
On the other hand, the representation $\pi$ is irreducible, 
the linear space $\spa\{\pi(h)\mid h\in G\}$ is dense in $\Bc(\Hc)$ 
in the strong operator topology, and then it easily follows that 
$\Ac L_B=L_B\Ac$ for every $B\in\Bc(\Hc)$. 
This property implies that there exists $A\in\Bc(\Hc)$ such that $\Ac= R_A$ 
(see for instance \cite{Ta03}). 
Now, by using \eqref{com} for $h=\1$, 
we get $\pi(g)X\pi(g)^{-1}A=\pi(g)XA\pi(g)^{-1}$ for every $g\in G$ and $X\in\Bc(\Hc)$, 
which implies $A\pi(g)=\pi(g)A$ for arbitrary $g\in G$. 
Since $\pi$ is an irreducible representation, it follows that $A$ is a scalar multiple 
of the identity operator on $\Hc$, hence $\Ac=L_A$ 
is a scalar multiple of the identity operator on $\Sg_2(\Hc)$, as we wished for.

\eqref{pi_bar_item3} 
This assertion follows by Remark~\ref{smoothness}.

\eqref{pi_bar_item4} 
It is easy to see that the sequence 
$$(0,X_1),\dots,(0,X_n),(X_1,0),\dots,(X_n,0) $$
is a Jordan-H\"older basis in the direct product $\gg\times\gg$, 
and the coadjoint orbit corresponding to the representation $\pi^{\otimes 2}\colon G\times G\to\Bc(\Sg_2(\Hc))$ 
is $\Oc\times\Oc$. 
(This follows for instance by the theorem in \S~6, Ch. II, Part 2 in \cite{Pu67}.)
Then the assertion follows by Example~\ref{sd_ex2}\eqref{sd_ex2_item4} 
along with the above Assertion~\eqref{pi_bar_item1}. 
\end{proof}

In the following definition we use an idea similar to one used in~\cite{MP09}.

\begin{definition}\label{pi_diez}
\normalfont
Let $G$ be a simply connected, nilpotent Lie group with a unitary irreducible representation 
$\pi\colon G\to\Bc(\Hc)$. 
Assume that $\Oc\subseteq\gg^*$ is the coadjoint orbit associated with 
this representation and define 
$$\pi^{\#}\colon G\ltimes G\to\Bc(L^2(\Oc)),\quad 
\pi^{\#}(\exp_G X,\exp_G Y)f=\ee^{\ie\hake{\cdot,X}}\#\ee^{\ie\hake{\cdot,Y}}\# f\#\ee^{-\ie\hake{\cdot,X}},$$
where $\#$ is the Moyal product associated with~$\pi$ 
(see Definition~\ref{moyal}). 
We note the following equivalent expression 
\begin{equation}\label{pi_diez_eq1}
(\forall X,Y\in\gg)\quad 
\pi^{\#}(\exp_{G\ltimes G}(X,Y))f=\ee^{\ie\hake{\cdot,X+Y}}\# f\#\ee^{-\ie\hake{\cdot,X}}
\end{equation}
which follows by Example~\ref{sd_ex2}\eqref{sd_ex2_item3}.
The corresponding \emph{ambiguity function} is given by 
$$\Ac^{\#}_\Phi F\colon \gg_e\times\gg_e\to\CC,\quad
(\Ac^{\#}_\Phi F)(X,Y)=(F\mid\pi^{\#}(\exp_{G\ltimes G}(X,Y))\Phi)$$
for $\Phi,F\in L^2(\Oc)$ or for a function $\Phi\in\Sc(\Oc)$ and 
a continuous antilinear functional $F\colon\Sc(\Oc)\to\CC$ 
denoted by $\Psi\mapsto (F\mid \Psi)$. 
\qed
\end{definition}

\begin{remark}\label{expl}
\normalfont
To explain the terminology of Definition~\ref{pi_diez}, 
let us see that we really have to do with the 
ambiguity function of a unitary representation. 
To this end,  recall the unitary operator $\Op^\pi\colon L^2(\Oc)\to\Sg_2(\Hc)$ 
(see e.g., Example~\ref{exs}\eqref{exs_item3})
and the representation $\bar\pi\colon G\ltimes G\to\Bc(\Sg_2(\Hc))$ from Lemma~\ref{pi_bar}. 
It follows by Definition~\ref{moyal} and Example~\ref{exs}\eqref{exs_item4} 
that the unitary operator $\Op^\pi$ intertwines $\pi^{\#}$ and $\bar\pi$, 
hence we get by Lemma~\ref{pi_bar} that $\pi^{\#}$ is also 
a unitary irreducible representation. 
It also follows that  $\gg_e\times\gg_e\subseteq\gg\ltimes\gg$ 
is a predual to the coadjoint orbit $\Oc^{\#}\subseteq(\gg\ltimes\gg)^*$ 
associated with the representation $\pi^{\#}$. 

Let us note that the space of smooth vectors for the representation $\pi^{\#}$ 
is equal to $\Sc(\Oc)$, as a consequence of Lemma~\ref{pi_bar}\eqref{pi_bar_item3}, 
since $\Op^\pi\colon\Sc(\Oc)\to\Bc(\Hc)_\infty$ is a linear isomorphism 
by Th.~4.1.4 in~\cite{Pe94}. 
\qed
\end{remark}

The next statement points out 
the representation theoretic background of the computation 
carried out in the proof of Lemma~14.5.1 in \cite{Gr01}. 

\begin{proposition}\label{exact}
Let $G$ be a simply connected, nilpotent Lie group with a unitary irreducible representation 
$\pi\colon G\to\Bc(\Hc)$.   
Pick any predual  $\gg_e\subseteq\gg$ for 
the coadjoint orbit $\Oc\subseteq\gg^*$ 
corresponding to the representation~$\pi$. 
If either $\phi_1,\phi_2,f_1,f_2\in\Hc$ 
or $\phi_1,\phi_2\in\Hc_\infty$ and $f_1,f_2\in\Hc_{-\infty}$, 
then 
$$(\forall X,Y\in\gg_e)\quad 
\Ac^{\#}_\Phi(\Wig(f_1,f_2))(X,Y)
=(\Ac_{\phi_1}f_1)(X+Y)\cdot\overline{(\Ac_{\phi_2}f_2)(X)} $$
where $\Phi:=\Wig(\phi_1,\phi_2)\in L^2(\Oc)$, while 
$\Wig(\cdot,\cdot)$ and 
$\Ac_{\phi_j}f_j\colon\gg_e\to\CC$ for $j=1,2$ 
are cross-Wigner distributions and ambiguity functions for the representation~$\pi$, respectively. 
\end{proposition}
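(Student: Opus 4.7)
The plan is to transport everything to the operator side via the unitary $\Op^\pi\colon L^2(\Oc)\to\Sg_2(\Hc)$. Under this map the Moyal product becomes operator composition (Definition~\ref{moyal}), the characters $\ee^{\ie\hake{\cdot,X}}$ become $\pi(\exp_G X)$ (Example~\ref{exs}\eqref{exs_item4}), the cross-Wigner distributions become rank-one operators (Corollary~\ref{pseudo_prop}\eqref{pseudo_prop_item3} and Remark~\ref{wig_ext}), and $\Op^\pi$ intertwines $\pi^\#$ with the representation $\bar\pi$ of Lemma~\ref{pi_bar} (Remark~\ref{expl}). Together these reduce the claim to a routine rank-one trace computation.

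First I would combine \eqref{pi_diez_eq1} with Example~\ref{sd_ex2}\eqref{sd_ex2_item3} and the defining formula for $\bar\pi$ to obtain
\begin{equation*}
\bar\pi(\exp_{G\ltimes G}(X,Y))\,T=\pi(\exp_G(X+Y))\,T\,\pi(\exp_G(-X)),
\end{equation*}
which in particular re-confirms the intertwining. Writing $A_{u,v}:=(\cdot\mid v)u$ for the rank-one operator as in the proof of Proposition~\ref{orthog}, one has $\Op^\pi(\Wig(\phi_1,\phi_2))=A_{\phi_1,\phi_2}$ and $\Op^\pi(\Wig(f_1,f_2))=A_{f_1,f_2}$, and a direct computation using unitarity of $\pi$ gives
\begin{equation*}
\bar\pi(\exp_{G\ltimes G}(X,Y))A_{\phi_1,\phi_2}=A_{\pi(\exp_G(X+Y))\phi_1,\,\pi(\exp_G X)\phi_2}.
\end{equation*}

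Using the unitarity of $\Op^\pi$, the quantity $(\Ac^\#_\Phi\Wig(f_1,f_2))(X,Y)$ is then the Hilbert--Schmidt inner product of these two rank-one operators. The identities $A_{u_1,v_1}A_{u_2,v_2}=(u_2\mid v_1)A_{u_1,v_2}$ and $\Tr A_{u,v}=(u\mid v)$ from the proof of Proposition~\ref{orthog} collapse the trace at once to $(f_1\mid\pi(\exp_G(X+Y))\phi_1)\cdot(\pi(\exp_G X)\phi_2\mid f_2)$, and recognizing the two factors as $(\Ac_{\phi_1}f_1)(X+Y)$ and $\overline{(\Ac_{\phi_2}f_2)(X)}$ respectively produces the claimed identity. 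The distributional case $\phi_j\in\Hc_\infty$, $f_j\in\Hc_{-\infty}$ goes through by exactly the same computation once one reads $\Op^\pi$ as the isomorphism $\Sc'(\Oc)\to\Bc(\Hc)_\infty^*$ of Definition~\ref{moyal}, identifies $\Op^\pi(\Wig(f_1,f_2))$ with the rank-one trace-pairing functional $A\mapsto(f_1\mid Af_2)$ from Remark~\ref{wig_ext}, and uses Remark~\ref{expl} to note that $\pi^\#(\exp_{G\ltimes G}(X,Y))\Phi$ still lies in $\Sc(\Oc)$, so that the left-hand side is interpreted as a tempered pairing. The principal bookkeeping hurdle is the exponential formula for $G\ltimes G$, which is precisely what produces the shift $X\mapsto X+Y$ in the first factor of the answer.
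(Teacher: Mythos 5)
Your proposal is correct and follows essentially the same route as the paper's own proof: both transport the identity to the Hilbert--Schmidt side via the unitarity and intertwining property of $\Op^\pi$, use the exponential formula of Example~\ref{sd_ex2}\eqref{sd_ex2_item3} to produce the shift $X\mapsto X+Y$, and identify the cross-Wigner distributions with rank-one operators via Remark~\ref{wig_ext} before collapsing the trace. The only cosmetic difference is that you make the rank-one trace identities from the proof of Proposition~\ref{orthog} explicit, whereas the paper computes the conjugated rank-one operator directly; your remarks on the distributional case match the paper's implicit treatment.
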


\begin{proof}
If we denote $F=\Wig(f_1,f_2)$, then 
for arbitrary $X,Y\in\gg_e$ we have by Definition~\ref{pi_diez}, Example~\ref{sd_ex2}\eqref{sd_ex2_item3}, 
and Remark~\ref{expl},
\allowdisplaybreaks
\begin{align}
(\Ac^{\#}_\Phi F)(X,Y) 
&=(F\mid\pi^{\#}(\exp_{G\ltimes G}(X,Y))\Phi)_{L^2(\Oc)} \nonumber\\
&=(F\mid\pi^{\#}(\exp_G X,(\exp_G X)^{-1}\exp_G(X+Y))\Phi)_{L^2(\Oc)} \nonumber\\
&=(\Op^\pi(F)\mid \bar\pi(\exp_G X,(\exp_G X)^{-1}\exp_G(X+Y))\Op^\pi(\Phi))_{\Sg_2(\Hc)} \nonumber\\
&=(\Op^\pi(F)\mid\pi(\exp_G(X+Y))\Op^\pi(\Phi)\pi(\exp_G X)^{-1})_{\Sg_2(\Hc)}. \nonumber
\end{align}
On the other hand Remark~\ref{wig_ext} (particularly \eqref{wig_ext_eq1}) 
shows that 
$$\Op^\pi(F)=(\cdot\mid f_2)f_1$$ and 
$\Op^\pi(\Phi)=(\cdot\mid\phi_2)\phi_1$, whence 
$$\pi(\exp_G(X+Y))\Op^\pi(\Phi)\pi(\exp_G X)^{-1}
=(\cdot\mid\pi(\exp_G X)\phi_2)\pi(\exp_G(X+Y))\phi_1.$$ 
Then the above computation leads to the formula 
$$(\Ac^{\#}_\Phi F)(X,Y)=(\pi(\exp_G X)\phi_2\mid f_2)\cdot
(f_1\mid\pi(\exp_G(X+Y))\phi_1), $$
which is equivalent to the equation in the statement.
\end{proof}

We now prove a generalization of Th.~4.1 in \cite{To04} 
to irreducible representations of nilpotent Lie groups. 

\begin{theorem}\label{wigner_cont_th}
Let $G$ be a simply connected, nilpotent Lie group with a unitary irreducible representation 
$\pi\colon G\to\Bc(\Hc)$. 
Let $\Oc$ be the corresponding coadjoint orbit, 
pick $\phi_1,\phi_2\in\Hc_\infty\setminus\{0\}$, 
and denote $\Phi=\Wig(\phi_1,\phi_2)\in\Sc(\Oc)$. 
Assume that $\gg_e$ is a predual to the coadjoint orbit $\Oc$, 
and let $\gg_e=\gg_e^1\dotplus\gg_e^2$ be any direct sum decomposition. 

If $1\le r\le s\le\infty$ and $r_1,r_2,s_1,s_2\in[r,s]$ satisfy 
$\frac{1}{r_1}+\frac{1}{r_2}
=\frac{1}{s_1}+\frac{1}{s_2}
=\frac{1}{r}+\frac{1}{s}$, 
then the cross-Wigner distribution 
defines a continuous sesquilinear map 
$$\Wig(\cdot,\cdot)\colon M^{r_1,s_1}_{\phi_1}(\pi)\times M^{r_2,s_2}_{\phi_2}(\pi) 
\to M^{r,s}_{\Phi}(\pi^{\#}).$$
\end{theorem}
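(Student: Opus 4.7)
The plan is to translate the statement, via the pointwise factorization from Proposition~\ref{exact}, into a mixed-norm Young-type convolution estimate on the predual space $\gg_e\times\gg_e$ of the coadjoint orbit associated with $\pi^{\#}$, and then invoke the latter to conclude.

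To begin with, Proposition~\ref{exact} applies with $\phi_1,\phi_2\in\Hc_\infty$ and $f_1,f_2\in\Hc_{-\infty}$, which is the regularity range we need since by definition $M^{r_j,s_j}_{\phi_j}(\pi)\subseteq\Hc_{-\infty}$. It yields the key identity
$$\Ac^{\#}_\Phi(\Wig(f_1,f_2))(X,Y)=(\Ac_{\phi_1}f_1)(X+Y)\cdot\overline{(\Ac_{\phi_2}f_2)(X)},\qquad X,Y\in\gg_e,$$
so that $\|\Wig(f_1,f_2)\|_{M^{r,s}_\Phi(\pi^{\#})}$ is precisely the mixed $L^{r,s}$-norm of this explicit product on $\gg_e\times\gg_e$ (cf.\ Remark~\ref{expl}), with respect to the decomposition of $\gg_e\times\gg_e$ inherited from $\gg_e=\gg_e^1\dotplus\gg_e^2$.

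Next I would perform the Jacobian-one linear change of variable $(X,Y)\mapsto(V,U):=(X,X+Y)$ on $\gg_e\times\gg_e$. Under this substitution the absolute value $|\Ac^{\#}_\Phi(\Wig(f_1,f_2))(X,Y)|$ becomes the tensor product $|(\Ac_{\phi_1}f_1)(U)|\cdot|(\Ac_{\phi_2}f_2)(V)|$ of two functions living on separate copies of $\gg_e$. The residual mixed norm, now measured with respect to the transported decomposition of $\gg_e\times\gg_e$, takes the shape of a mixed-norm Young-type convolution integral in $\Ac_{\phi_1}f_1$ and $\Ac_{\phi_2}f_2$. To close the argument I would invoke the mixed-norm Young inequality from the proof of Th.~4.1 in \cite{To04} (see also \cite{Gr01}), whose hypotheses are exactly the index conditions $r_1,r_2,s_1,s_2\in[r,s]$ and $\tfrac{1}{r_1}+\tfrac{1}{r_2}=\tfrac{1}{s_1}+\tfrac{1}{s_2}=\tfrac{1}{r}+\tfrac{1}{s}$. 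This produces
$$\|\Wig(f_1,f_2)\|_{M^{r,s}_\Phi(\pi^{\#})}\le C\,\|\Ac_{\phi_1}f_1\|_{L^{r_1,s_1}(\gg_e)}\,\|\Ac_{\phi_2}f_2\|_{L^{r_2,s_2}(\gg_e)}=C\,\|f_1\|_{M^{r_1,s_1}_{\phi_1}(\pi)}\,\|f_2\|_{M^{r_2,s_2}_{\phi_2}(\pi)},$$
which is the claimed sesquilinear continuity.

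The main obstacle is the careful bookkeeping of the mixed-norm structure through the shear $(X,Y)\mapsto(X,X+Y)$: since this shear is not block-diagonal with respect to $\gg_e^1\dotplus\gg_e^2$, Fubini cannot be applied naively and one must first use Minkowski's integral inequality to interchange the inner $L^r$-integration with the outer $L^s$-integration---this is precisely the step where the hypothesis $r\le s$ is essential. Once the orders of integration are correctly arranged, the remainder of the proof reduces to a routine application of H\"older's inequality with the given index relations.
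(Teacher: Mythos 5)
Your proposal is correct and follows essentially the same route as the paper: the identity of Proposition~\ref{exact} turns $\Vert\Wig(f_1,f_2)\Vert_{M^{r,s}_\Phi(\pi^{\#})}$ into the $L^{s/r}$-norm of a convolution of $\vert\Ac_{\phi_1}f_1\vert^r$ and $\vert\Ac_{f_2}\phi_2\vert^r$, which is then controlled by Minkowski's integral inequality (this is indeed where $r\le s$ enters) together with Young's inequality in the indices $t_j=r_j/r$ and $m_j=s_j/r$. The only difference is that you invoke the mixed-norm Young inequality as a black box from \cite{To04}, whereas the paper derives it on the spot by applying the scalar Young inequality separately on $\gg_e^1$ and then on $\gg_e^2$.
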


\begin{proof}
Let $f_1,f_2\in\Hc_{-\infty}$ and note that for every $X\in\gg_e$ we have 
\begin{equation}\label{wigner_cont_th_Delta}
\overline{(\Ac_{\phi_2}f_2)(X)}=\overline{(f_2\mid\pi(\exp_G X)\phi_2)}=(\Ac_{f_2}\phi_2)(-X).
\end{equation}
Therefore by Proposition~\ref{exact} we get 
\begin{equation}\label{wigner_cont_th_zero}
\Vert\Wig(f_1,f_2)\Vert_{M^{r,s}_\Phi(\pi^{\#})}=\Bigl(\int\limits_{\gg_e^2}F(Y_2)\de Y_2\Bigr)^{1/s},
\end{equation}
where 
\begin{equation}\label{wigner_cont_th_star}
F(Y_2)=\int\limits_{\gg_e^1}\Bigl(\int\limits_{\gg_e^2}\int\limits_{\gg_e^1}
\vert(\Ac_{\phi_1}f_1)(X_1+Y_1,X_2+Y_2)\cdot(\Ac_{f_2}\phi_2)(-X_1,-X_2)\vert^r
\de X_1\de X_2\Bigr)^{s/r} \de Y_1.
\end{equation}
On the other hand, it follows by Minkowski's inequality that for every measurable function 
$\Gamma\colon\gg_e^1\times\gg_e^2\times\gg_e^2\to\CC$ and every real number $t\ge1$ we have 
\begin{equation}\label{wigner_cont_th_sstar}
\Bigl(\int\limits_{\gg_e^1}\Bigl(\int\limits_{\gg_e^2}\vert\Gamma(Y_1,X_2,Y_2)\vert\de X_2\Bigr)^t 
\de Y_1\Bigr)^{1/t}
\le\int\limits_{\gg_e^2}\Bigl(\int\limits_{\gg_e^1}\vert\Gamma(Y_1,X_2,Y_2)\vert^t\de Y_1\Bigr)^{1/t}\de X_2
\end{equation}
whenever $Y_2\in\gg_e^2$. 
By \eqref{wigner_cont_th_star} and \eqref{wigner_cont_th_sstar} with $t:=s/r$ 
and 
$$\Gamma(Y_1,X_2,Y_2):=\int\limits_{\gg_e^2}
\vert(\Ac_{\phi_1}f_1)(Y_1-X_1,Y_2-X_2)\cdot(\Ac_{f_2}\phi_2)(X_1,X_2)\vert^r\de X_1$$ 
we get 
\begin{equation}\label{wigner_cont_th_ssstar}
\begin{aligned}
F(Y_2)
\le &\Bigl(\int\limits_{\gg_e^2}\Bigl(\int\limits_{\gg_e^1}\Gamma(Y_1,X_2,Y_2)^{s/r}
      \de Y_1\Bigr)^{r/s}
      \de X_2\Bigr)^{s/r} \\
    =&\Bigl(\int\limits_{\gg_e^2}\Vert\Gamma(\cdot,X_2,Y_2)\Vert_{L^{s/r}(\gg_e^1)}
      \de X_2\Bigr)^{s/r}. 
\end{aligned} 
\end{equation}
Now note that $\Gamma(\cdot,X_2,Y_2)$ is equal to the convolution product of the functions 
$\vert(\Ac_{\phi_1}f_1)(\cdot,Y_2-X_2)\vert^r$ and $\vert(\Ac_{f_2}\phi_2)(\cdot,X_2)\vert^r$. 
It follows by Young's inequality that 
$$\begin{aligned}
\Vert\Gamma(\cdot,X_2,Y_2)\Vert_{L^{s/r}(\gg_e^1)}
&\le\Vert\vert(\Ac_{\phi_1}f_1)(\cdot,Y_2-X_2)\vert^r\Vert_{L^{t_1}(\gg_e^1)}\cdot
     \Vert\vert(\Ac_{f_2}\phi_2)(\cdot,X_2)\vert^r\Vert_{L^{t_2}(\gg_e^1)} \\
&=\Vert(\Ac_{\phi_1}f_1)(\cdot,Y_2-X_2)\Vert_{L^{rt_1}(\gg_e^1)}^r\cdot
     \Vert(\Ac_{f_2}\phi_2)(\cdot,X_2)\Vert_{L^{rt_2}(\gg_e^1)}^r
\end{aligned}$$
whenever $t_1,t_2\in[1,\infty]$ satisfy $\frac{1}{t_1}+\frac{1}{t_2}=1+\frac{r}{s}$. 
By using the above inequality with $t_j=\frac{r_j}{r}$ for $j=1,2$, 
and taking into account \eqref{wigner_cont_th_ssstar}, we get 
\begin{equation}\label{wigner_cont_th_sssstar}
\begin{aligned}
F(Y_2)\le
  &\Bigl(\int\limits_{\gg_e^2}
        \Vert(\Ac_{\phi_1}f_1)(\cdot,Y_2-X_2)\Vert_{L^{rt_1}(\gg_e^1)}^r
     \Vert(\Ac_{f_2}\phi_2)(\cdot,X_2)\Vert_{L^{rt_2}(\gg_e^1)}^r
      \de X_2\Bigr)^{s/r} \\
  &=:\theta(Y_2)^{s/r}, 
\end{aligned}
\end{equation}
where $\theta(\cdot)$ is the convolution product of the functions 
$X_2\mapsto \Vert(\Ac_{\phi_1}f_1)(\cdot,X_2)\Vert_{L^{rt_1}(\gg_e^1)}^r$ 
and 
$X_2\mapsto\Vert(\Ac_{f_2}\phi_2)(\cdot,X_2)\Vert_{L^{rt_2}(\gg_e^1)}^r$. 
It follows by Young's inequality again that 
$$\begin{aligned}
\Vert\theta\Vert_{L^{s/r}(\gg_e^2)}
\le &
\Bigl(\int\limits_{\gg_e^2}\Vert(\Ac_{\phi_1}f_1)(\cdot,X_2)\Vert_{L^{rt_1}(\gg_e^1)}^r\de X_2\Bigr)^{1/m_1} \\
&\times\Bigl(\int\limits_{\gg_e^2}\Vert(\Ac_{f_2}\phi_2)(\cdot,X_2)\Vert_{L^{rt_2}(\gg_e^1)}^r\de X_2\Bigr)^{1/m_2} 
\end{aligned}$$
provided that $m_1,m_2\in[1,\infty]$ and $\frac{1}{m_1}+\frac{1}{m_2}=1+\frac{r}{s}$. 
For $m_j=\frac{s_j}{r}$, $j=1,2$, we get 
$$\Vert\theta\Vert_{L^{s/r}(\gg_e^2)}
\le (\Vert f_1\Vert_{M^{r_1,s_1}_{\phi_1}(\pi)})^r (\Vert f_2\Vert_{M^{r_2,s_2}_{\phi_2}(\pi)})^r, $$
where we also used \eqref{wigner_cont_th_Delta}. 
Then by \eqref{wigner_cont_th_zero} and \eqref{wigner_cont_th_sssstar} 
we get 
$$\Vert\Wig(f_1,f_2)\Vert_{M^{r,s}_\Phi(\pi^{\#})}\le\Vert f_1\Vert_{M^{r_1,s_1}_{\phi_1}(\pi)}\cdot
\Vert f_2\Vert_{M^{r_2,s_2}_{\phi_2}(\pi)},$$ 
and this concludes the proof. 
\end{proof}

\begin{remark}\label{wigner_cont_rem}
\normalfont
A particularly sharp version of Theorem~\ref{wigner_cont_rem} holds for $r_1=s_1$, $r_2=s_2$, and $r=s$. 
That is, let $r,r_1,r_2\in[1,\infty]$ such that $\frac{1}{r_1}+\frac{1}{r_2}=\frac{1}{r}$. 
It follows at once by Proposition~\ref{exact} that for arbitrary $f_1,f_2\in\Hc_{-\infty}$ we have 
$$\Vert\Wig(f_1,f_2)\Vert_{M^{r,r}_\Phi(\pi^{\#})}=\Vert f_1\Vert_{M^{r_1,r_1}_{\phi_1}(\pi)}\cdot
\Vert f_2\Vert_{M^{r_2,r_2}_{\phi_2}(\pi)},$$
which in turn implies that $\Wig(f_1,f_2)\in M^{r,r}_\Phi(\pi^{\#})$ 
if and only if for $j=1,2$ we have $f_j\in M^{r_j,r_j}_{\phi_j}(\pi)$. 
\qed
\end{remark}

\begin{corollary}\label{wigner_cont}
Let $G$ be a simply connected, nilpotent Lie group with a unitary irreducible representation 
$\pi\colon G\to\Bc(\Hc)$, pick $\phi_1,\phi_2\in\Hc_\infty\setminus\{0\}$, 
and denote $\Phi=\Wig(\phi_1,\phi_2)\in\Sc(\Oc)$. 
If $r,r_1,r_2\in[1,\infty]$ and $\frac{1}{r}=\frac{1}{r_1}+\frac{1}{r_2}$, 
then the cross-Wigner distribution associated with 
any predual to the coadjoint orbit of the representation~$\pi$ 
defines a continuous sesquilinear map 
$$\Wig(\cdot,\cdot)\colon M^{r_1,r_1}_{\phi_1}(\pi)\times M^{r_2,r_2}_{\phi_2}(\pi) 
\to M^{r,\infty}_{\Phi}(\pi^{\#}).$$
\end{corollary}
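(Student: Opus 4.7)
The plan is to derive this as a direct specialization of Theorem~\ref{wigner_cont_th}. Specifically, I would set $s:=\infty$, $s_1:=r_1$, and $s_2:=r_2$ in the hypotheses of that theorem. Then the two compatibility relations $\frac{1}{r_1}+\frac{1}{r_2}=\frac{1}{s_1}+\frac{1}{s_2}=\frac{1}{r}+\frac{1}{s}$ all collapse to the single identity $\frac{1}{r}=\frac{1}{r_1}+\frac{1}{r_2}$ assumed in the corollary, since $\frac{1}{s}=0$.

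Next I would verify the range constraints $r_1,r_2,s_1,s_2\in[r,s]$ imposed by Theorem~\ref{wigner_cont_th}. With $s=\infty$, this reduces to $r_j\ge r$ for $j=1,2$, which is immediate from $\frac{1}{r_j}\le \frac{1}{r_1}+\frac{1}{r_2}=\frac{1}{r}$. The remaining assumption $1\le r\le s$ of Theorem~\ref{wigner_cont_th} is trivially satisfied because $r\in[1,\infty]$ by hypothesis and $s=\infty$. The auxiliary choice of direct-sum decomposition $\gg_e=\gg_e^1\dotplus\gg_e^2$ required to apply Theorem~\ref{wigner_cont_th} is essentially immaterial here, because the mixed-norm space $L^{r_j,r_j}$ on $\gg_e^1\times\gg_e^2$ coincides with the ordinary $L^{r_j}$-space on $\gg_e$, so that $M^{r_j,r_j}_{\phi_j}(\pi)$ does not depend on the splitting.

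With these verifications in place, Theorem~\ref{wigner_cont_th} delivers exactly the continuous sesquilinear map $\Wig(\cdot,\cdot)\colon M^{r_1,r_1}_{\phi_1}(\pi)\times M^{r_2,r_2}_{\phi_2}(\pi)\to M^{r,\infty}_{\Phi}(\pi^{\#})$ asserted in the corollary. I do not foresee any real obstacle, since the analytic substance -- Proposition~\ref{exact}, combined with two applications each of Minkowski's and Young's convolution inequalities -- has already been carried out inside the proof of Theorem~\ref{wigner_cont_th}; the corollary is simply a parameter specialization.
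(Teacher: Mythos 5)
Your proposal is correct and coincides with the paper's own (first) proof, which likewise obtains the corollary by specializing Theorem~\ref{wigner_cont_th} to $s=\infty$, $s_1=r_1$, $s_2=r_2$; your verification of the range condition $r_1,r_2\in[r,\infty]$ and of the irrelevance of the splitting $\gg_e=\gg_e^1\dotplus\gg_e^2$ when $r_j=s_j$ is sound. The paper additionally sketches a second, direct argument via Proposition~\ref{exact} and H\"older's inequality, but that is offered only as an alternative.
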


\begin{proof}
One can apply Theorem~\ref{wigner_cont_th} with $r_1=s_1$, $r_2=s_2$, and $s=\infty$. 
Alternatively, a direct proof proceeds as follows. 
Let $f_1,f_2\in\Hc_{-\infty}$. 
It follows by Proposition~\ref{exact} along with H\"older's inequality that 
for every $Y\in\gg_e$ we have 
$$
\Vert\Ac^{\#}_\Phi(\Wig(f_1,f_2))(\cdot,Y)\Vert_{L^r(\gg_e)}
\le\Vert\Ac_{\phi_1}f_1\Vert_{L^{r_1}(\gg_e)}\cdot 
\Vert\Ac_{\phi_2}f_2\Vert_{L^{r_2}(\gg_e)} $$
whence 
$\Vert\Wig(f_1,f_2)\Vert_{M^{r,\infty}_\Phi(\pi^{\#})}
\le\Vert f_1\Vert_{M^{r_1,r_1}_{\phi_1}(\pi)}\cdot 
\Vert f_2\Vert_{M^{r_2,r_2}_{\phi_2}(\pi)} $, and the conclusion follows. 
\end{proof}

The next corollary provides a partial generalization of Th.~4.3 in \cite{To04}. 

\begin{corollary}\label{C2}
Let $G$ be a simply connected, nilpotent Lie group with a unitary irreducible representation 
$\pi\colon G\to\Bc(\Hc)$, pick $\phi_1,\phi_2\in\Hc_\infty\setminus\{0\}$, 
and denote $\Phi=\Wig(\phi_1,\phi_2)\in\Sc(\Oc)$. 
Assume that $\gg_e$ is a predual to the coadjoint orbit $\Oc$ associated with the representation $\pi$, 
and let $\gg_e=\gg_e^1\dotplus\gg_e^2$ be any direct sum decomposition. 
If $r,s,r_1,s_1,r_2,s_2\in[1,\infty]$ satisfy the conditions  
$$r\le s, \quad r_2,s_2\in[r,s],\quad\text{and}\quad
\frac{1}{r_1}-\frac{1}{r_2}=\frac{1}{s_1}-\frac{1}{s_2}=1-\frac{1}{r}-\frac{1}{s},$$
then the following assertions hold: 
\begin{enumerate}
\item For every symbol $a\in M^{r,s}_\Phi(\pi^{\#})$ we have 
a bounded linear operator 
$$\Op^\pi(a)\colon M^{r_1,s_1}_{\phi_1}(\pi)\to M^{r_2,s_2}_{\phi_2}(\pi).$$ 
\item The linear mapping $\Op^\pi(\cdot)\colon  M^{r,s}_\Phi(\pi^{\#})\to\Bc(M^{r_1,s_1}_{\phi_1}(\pi),M^{r_2,s_2}_{\phi_2}(\pi))$ 
is continuous. 
\end{enumerate}
\end{corollary}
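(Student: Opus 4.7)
The plan is to prove the bilinear bound
\[
\|\Op^\pi(a)\phi\|_{M^{r_2,s_2}_{\phi_2}(\pi)}
\le C\,\|a\|_{M^{r,s}_\Phi(\pi^\#)}\cdot\|\phi\|_{M^{r_1,s_1}_{\phi_1}(\pi)}
\]
by a duality argument, from which both parts of the corollary will follow. After a preliminary density reduction (working with $a\in\Sc(\Oc)$ and $\phi\in\Hc_\infty$, so that every expression below is classically defined), the general case is recovered by continuous extension and linearity.

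For an arbitrary test vector $f\in\Hc_\infty$, Corollary~\ref{pseudo_prop}(\ref{pseudo_prop_item2}) gives $(\Op^\pi(a)\phi\mid f)_\Hc = (a\mid\Wig(f,\phi))_{L^2(\Oc)}$. Applying the orthogonality identity of Proposition~\ref{orthog}(\ref{orthog_item1}) to the auxiliary representation $\pi^\#$ with the normalized window $\Phi/\|\Phi\|$ converts the right-hand side into a pairing of $\pi^\#$-ambiguity functions on $\gg_e\times\gg_e$, and H\"older's inequality on the mixed-norm space $L^{r,s}$ paired with its conjugate $L^{r',s'}$ yields
\[
|(\Op^\pi(a)\phi\mid f)_\Hc|
\le \|\Phi\|^{-2}\,\|a\|_{M^{r,s}_\Phi(\pi^\#)}\cdot\|\Wig(f,\phi)\|_{M^{r',s'}_\Phi(\pi^\#)}.
\]

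The central estimate is therefore the bound on $\|\Wig(f,\phi)\|_{M^{r',s'}_\Phi(\pi^\#)}$. Proposition~\ref{exact} gives the factorization $(\Ac^\#_\Phi\Wig(f,\phi))(X,Y) = (\Ac_{\phi_1}f)(X+Y)\cdot\overline{(\Ac_{\phi_2}\phi)(X)}$; the substitution $U=X+Y$ turns the defining integral into a convolution on $\gg_e$, and a chain of mixed-norm Young and H\"older estimates, in the same spirit as the proof of Theorem~\ref{wigner_cont_th} but adapted to the dual exponent range, produces
\[
\|\Wig(f,\phi)\|_{M^{r',s'}_\Phi(\pi^\#)}
\le C\,\|f\|_{M^{r_2',s_2'}_{\phi_2}(\pi)}\cdot\|\phi\|_{M^{r_1,s_1}_{\phi_1}(\pi)}.
\]
The algebraic hypotheses of the corollary are designed precisely so that this step closes: the identities $\frac{1}{r_1}-\frac{1}{r_2} = \frac{1}{s_1}-\frac{1}{s_2} = 1-\frac{1}{r}-\frac{1}{s}$ are equivalent to $\frac{1}{r_1}+\frac{1}{r_2'} = \frac{1}{s_1}+\frac{1}{s_2'} = \frac{1}{r'}+\frac{1}{s'}$, which is the Young/H\"older exponent constraint required by the estimate, while $r\le s$ and $r_2,s_2\in[r,s]$ translate into the admissible range $r_1,r_2',s_1,s_2'\in[s',r']$. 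Combining the two displayed inequalities and taking the supremum over $f$ in the unit ball of $M^{r_2',s_2'}_{\phi_2}(\pi)$ --- regarded as the predual of $M^{r_2,s_2}_{\phi_2}(\pi)$ under the $L^{r_2,s_2}$--$L^{r_2',s_2'}$ duality of ambiguity functions --- delivers the required operator norm bound.

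The main obstacle will be carrying out the Wigner estimate outside the $r\le s$ convention of Theorem~\ref{wigner_cont_th}: the hypothesis $r\le s$ forces $r'\ge s'$, so the target space $M^{r',s'}_\Phi(\pi^\#)$ sits in the dual exponent range where the Minkowski step in the proof of that theorem no longer supplies the required upper bound, and the estimate must be re-derived by reordering the Young and H\"older applications. A further subtlety concerns window matching: Proposition~\ref{exact} expresses $\Ac^\#_\Phi\Wig(f,\phi)$ via $\Ac_{\phi_1}f$ and $\Ac_{\phi_2}\phi$, but the hypotheses of the corollary constrain $\phi$ through its $\phi_1$-ambiguity and the test function $f$ through its $\phi_2$-ambiguity, exchanging the window roles; one handles this either by verifying a window-swap symmetry of $\Wig$ compatible with $\Phi=\Wig(\phi_1,\phi_2)$, or by redoing the duality pairing with $\Wig(\phi,f)=\overline{\Wig(f,\phi)}$ (obtained via $\Op^\pi(\Wig(f,\phi))^*=\Op^\pi(\Wig(\phi,f))$) in place of $\Wig(f,\phi)$.
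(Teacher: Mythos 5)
Your proposal follows essentially the same route as the paper's proof: the duality reduction via Corollary~\ref{pseudo_prop}\eqref{pseudo_prop_item2} and Proposition~\ref{orthog}\eqref{orthog_item1} applied to $\pi^{\#}$, H\"older's inequality for mixed-norm spaces, the key bound $\Vert\Wig(f_2,f_1)\Vert_{M^{r',s'}_\Phi(\pi^{\#})}\le\Vert f_1\Vert_{M^{r_1,s_1}_{\phi_1}(\pi)}\Vert f_2\Vert_{M^{r_2',s_2'}_{\phi_2}(\pi)}$, and mixed-norm duality to conclude. The two ``obstacles'' you flag are genuine but are not addressed in the paper either: it simply invokes Theorem~\ref{wigner_cont_th} to obtain the Wigner estimate in the conjugate range $r'\ge s'$ (outside that theorem's stated hypothesis on the target exponents) and with the windows $\phi_1,\phi_2$ attached to the arguments of $\Wig(f_2,f_1)$ in the opposite order to the one Proposition~\ref{exact} produces, so your plan to rework the Minkowski/Young steps and to handle the window swap via $\Wig(\phi,f)=\overline{\Wig(f,\phi)}$ is, if anything, more careful than the published argument.
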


\begin{proof} 
For every $t\in[1,\infty]$ we are going to define $t'\in[1,\infty]$ by the equation $\frac{1}{t}+\frac{1}{t'}=1$. 
With this notation, the hypothesis implies 
$\frac{1}{r_1}+\frac{1}{r_2'}=\frac{1}{s_1}+\frac{1}{s_2'}=\frac{1}{r'}+\frac{1}{s'}$ 
and moreover $r_1,s_1,r_2',s_2'\in[r',s']$. 
Therefore we can apply Theorem~\ref{wigner_cont_th} to obtain 
\begin{equation}\label{C2_eq1}
\Vert\Wig(f_2,f_1)\Vert_{M^{r',s'}_\Phi(\pi^{\#})}\le
\Vert f_1\Vert_{M^{r_1,s_1}_{\phi_1}(\pi)}\cdot\Vert f_2\Vert_{M^{r_2',s_2'}_{\phi_2}(\pi)} 
\end{equation}
whenever $f_1,f_2\in\Hc_{-\infty}$. 

On the other hand, if $a\in M^{r,s}(\pi^{\#})$, then 
$$(\Op^\pi(a)f_1\mid f_2)=(a\mid\Wig(f_2,f_1))_{L^2(\Oc)}
=(\Ac^{\#}_\Phi a\mid\Ac^{\#}_\Phi(\Wig(f_2,f_1)))_{L^2(\gg_e\times\gg_e)}$$
by Corollary~\ref{pseudo_prop}\eqref{pseudo_prop_item2} and Proposition~\ref{orthog}\eqref{orthog_item1}.
Then H\"older's inequality for mixed-norm spaces 
(see for instance Lemma~11.1.2(b) in \cite{Gr01}) 
shows that 
$$\begin{aligned}
\vert(\Op^\pi(a)f_1\mid f_2)\vert
&\le 
\Vert\Ac^{\#}_\Phi a\Vert_{L^{r,s}(\gg_e\times\gg_e)} \cdot \Vert\Ac^{\#}_\Phi(\Wig(f_2,f_1))\Vert_{L^{r',s'}(\gg_e\times\gg_e)}\\ 
&=\Vert a\Vert_{M_\Phi^{r,s}(\pi^{\#})} \cdot 
\Vert\Wig(f_2,f_1)\Vert_{M_\Phi^{r',s'}(\pi^{\#})} \\
&\le\Vert a\Vert_{M_\Phi^{r,s}(\pi^{\#})} \cdot
\Vert f_1\Vert_{M^{r_1,s_1}_{\phi_1}(\pi)}\cdot\Vert f_2\Vert_{M^{r_2',s_2'}_{\phi_2}(\pi)},
\end{aligned}$$
where the latter inequality follows by \eqref{C2_eq1}. 
Now the assertion follows by a straightforward argument 
that uses the duality of the mixed-norm spaces 
(see Lemma~11.1.2(d) in \cite{Gr01}). 
\end{proof}

\begin{corollary}\label{C3}
If $G$ be a simply connected, nilpotent Lie group with a unitary irreducible representation 
$\pi\colon G\to\Bc(\Hc)$, 
then the following assertions hold whenever $\Phi=\Wig(\phi_1,\phi_2)$ with $\phi_1,\phi_2\in\Hc_\infty\setminus\{0\}$: 
\begin{enumerate}
\item For every $a\in M^{\infty,1}_\Phi(\pi^{\#})$ we have 
$\Op^\pi(a)\in\Bc(\Hc)$. 
\item The linear mapping $\Op^\pi(\cdot)\colon M^{\infty,1}_\Phi(\pi^{\#})\to\Bc(\Hc)$ is continuous. 
\end{enumerate}
\end{corollary}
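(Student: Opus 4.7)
The plan is to establish the operator-norm bound $\|\Op^\pi(a)f\|_{\Hc}\le C\,\|a\|_{M^{\infty,1}_\Phi(\pi^\#)}\|f\|_{\Hc}$ on a dense subspace of $\Hc$, by combining the duality formula for the Weyl-Pedersen calculus (Corollary~\ref{pseudo_prop}\eqref{pseudo_prop_item2}) with the Moyal-type identity for the lifted representation $\pi^\#$ (which is unitary irreducible by Remark~\ref{expl}, so Proposition~\ref{orthog}\eqref{orthog_item1} applies to it), a H\"older inequality for mixed-norm spaces, and the off-diagonal Wigner estimate of Corollary~\ref{wigner_cont}. Throughout, the decomposition I use on $\gg_e\times\gg_e$ is the natural product decomposition $(\gg_e\times\{0\})\dotplus(\{0\}\times\gg_e)$, which is exactly the one implicit in Corollary~\ref{wigner_cont} and in the formula of Proposition~\ref{exact}.

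The steps, in order, are as follows. (i) For $f,g\in\Hc_\infty$, rewrite the sesquilinear form $(\Op^\pi(a)f\mid g)_{\Hc}$ as $(a\mid\Wig(g,f))_{L^2(\Oc)}$ using Corollary~\ref{pseudo_prop}\eqref{pseudo_prop_item2}. (ii) Transfer this pairing to phase space via the fact that $\Ac^\#_\Phi/\|\Phi\|$ is an isometry $L^2(\Oc)\to L^2(\gg_e\times\gg_e)$, producing, up to a constant depending only on $\|\Phi\|$, the pairing $(\Ac^\#_\Phi a\mid\Ac^\#_\Phi\Wig(g,f))_{L^2(\gg_e\times\gg_e)}$. (iii) Bound this by H\"older's inequality for mixed-norm spaces (Lemma~11.1.2(b) in \cite{Gr01}) with the conjugate exponent pairs $(\infty,1)$ and $(1,\infty)$, obtaining $\|a\|_{M^{\infty,1}_\Phi(\pi^\#)}\cdot\|\Wig(g,f)\|_{M^{1,\infty}_\Phi(\pi^\#)}$. (iv) Invoke Corollary~\ref{wigner_cont} with $r_1=r_2=2$ (so $r=1$), combined with Example~\ref{mod_L2} that identifies $M^{2,2}_{\phi_j}(\pi)=\Hc$, to conclude $\|\Wig(g,f)\|_{M^{1,\infty}_\Phi(\pi^\#)}\le C\,\|g\|_{\Hc}\|f\|_{\Hc}$. (v) Take the supremum over $\|g\|_{\Hc}\le 1$ and extend the resulting bound from $\Hc_\infty$ to all of $\Hc$ by density, which gives both the $\Bc(\Hc)$-membership and the continuity of $\Op^\pi(\cdot)$.

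The main obstacle I anticipate is conceptual rather than computational: one is tempted to deduce the corollary directly from Corollary~\ref{C2} with $r_1=s_1=r_2=s_2=2$, but the restriction $r\le s$ inherited from Theorem~\ref{wigner_cont_th} then forces $(r,s)=(1,\infty)$, which provides continuity only for the ``wrong-order'' symbol space $M^{1,\infty}_\Phi(\pi^\#)$ rather than for the Sjostrand-type space $M^{\infty,1}_\Phi(\pi^\#)$. This is bypassed by using instead the off-diagonal case of the Wigner estimate (Corollary~\ref{wigner_cont}, which accommodates the outer exponent~$\infty$) and coupling it via H\"older duality $L^{\infty,1}$-versus-$L^{1,\infty}$ directly at the level of ambiguity functions on~$\gg_e\times\gg_e$.
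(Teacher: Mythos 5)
Your argument is correct, and at bottom it is the same argument the paper uses: the paper proves this corollary by specializing Corollary~\ref{C2} to $r_1=s_1=r_2=s_2=2$, and the proof of Corollary~\ref{C2} is exactly your chain (i)--(v) --- the duality formula of Corollary~\ref{pseudo_prop}\eqref{pseudo_prop_item2}, the Moyal identity for $\pi^{\#}$ from Proposition~\ref{orthog}\eqref{orthog_item1}, H\"older's inequality for mixed-norm spaces, the Wigner estimate, and Example~\ref{mod_L2} plus density. The ``obstacle'' you flag is real and your resolution is the right one: read literally, Corollary~\ref{C2} with $(r,s)=(1,\infty)$ names the symbol class $M^{1,\infty}_\Phi(\pi^{\#})$, whereas the H\"older step inside its proof pairs $\Vert\Ac^{\#}_\Phi a\Vert_{L^{\infty,1}}$ against $\Vert\Ac^{\#}_\Phi(\Wig(f_2,f_1))\Vert_{L^{1,\infty}}$, so what the $M^{\infty,1}$ conclusion actually requires is the estimate $\Vert\Wig(g,f)\Vert_{M^{1,\infty}_\Phi(\pi^{\#})}\le\Vert g\Vert_{\Hc}\,\Vert f\Vert_{\Hc}$ of Corollary~\ref{wigner_cont} with $r_1=r_2=2$, $r=1$ --- precisely what you invoke. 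Your direct derivation keeps the exponent bookkeeping consistent with the statement being proved, at the cost of redoing the special case rather than citing Corollary~\ref{C2}. Two small points for the write-up: the transfer to phase space carries the harmless normalization $\Vert\Phi\Vert^{-2}$ from the Moyal identity, and for $a\in M^{\infty,1}_\Phi(\pi^{\#})\subseteq\Sc'(\Oc)$ the pairing $(a\mid\Wig(g,f))$ must be taken distributionally with $f,g\in\Hc_\infty$ (so that $\Wig(g,f)\in\Sc(\Oc)$), which is exactly why your final density step is needed.
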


\begin{proof}
This is the special case of Corollary~\ref{C2} with with 
$r_1=s_1=r_2=s_2=2$, $r=1$, and $s=\infty$, since  
Example~\ref{mod_L2} shows that $M^{2,2}(\pi)=\Hc$. 
\end{proof}

We conclude this section by a sufficient condition for 
a pseudo-differential operator to belong to the trace class. 
In the special case of the Schr\"odinger representation of a Heisenberg group, 
a proof for this result can be found for instance in \cite{Gr96} or \cite{GH99}.

\begin{proposition}\label{trace_class}
Let $G$ be a simply connected, nilpotent Lie group with a unitary irreducible representation 
$\pi\colon G\to\Bc(\Hc)$, pick $\phi_1,\phi_2\in\Hc_\infty$ with $\Vert\phi_1\Vert=\Vert\phi_2\Vert=1$, 
and denote $\Phi=\Wig(\phi_1,\phi_2)\in\Sc(\Oc)$. 
Then for every symbol $a\in M^{1,1}_\Phi(\pi^{\#})$ we have 
$\Op^\pi(a)\in\Sg_1(\Hc)$ and $\Vert\Op^\pi(a)\Vert_1\le\Vert a\Vert_{M^{1,1}_\Phi(\pi^{\#})}$. 
\end{proposition}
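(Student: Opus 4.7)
The plan is to combine the inversion formula from Corollary~\ref{orthog_distrib}, applied to the representation~$\pi^{\#}$ with window vector~$\Phi$, with the fact that the Weyl-Pedersen calculus intertwines $\pi^{\#}$ with~$\bar\pi$, turning integrability of the ambiguity function into integrability of an operator-valued function in trace norm.

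First I would observe that $\Phi=\Wig(\phi_1,\phi_2)\in\Sc(\Oc)$ is a normalized smooth vector for~$\pi^{\#}$: by Remark~\ref{expl} the space of smooth vectors for $\pi^{\#}$ is precisely~$\Sc(\Oc)$, and the Moyal identity in Proposition~\ref{orthog}\eqref{orthog_item1} together with $\Vert\phi_1\Vert=\Vert\phi_2\Vert=1$ yields $\Vert\Phi\Vert_{L^2(\Oc)}=1$. Applying Corollary~\ref{orthog_distrib}\eqref{orthog_distrib_item1} to $\pi^{\#}$ and to the distribution $a$ (noting that $M^{1,1}_\Phi(\pi^{\#})\subseteq L^2(\Oc)_{-\infty}$ since a tempered distribution with integrable $\pi^{\#}$-ambiguity function is certainly a continuous antilinear form on $\Sc(\Oc)$) gives
\begin{equation*}
a=\int\limits_{\gg_e\times\gg_e}(\Ac^{\#}_\Phi a)(X,Y)\,\pi^{\#}(\exp_{G\ltimes G}(X,Y))\Phi\,\de X\,\de Y,
\end{equation*}
the integral converging in the weak$^*$-topology of $\Sc'(\Oc)$.

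Next, I would apply the Weyl-Pedersen correspondence. By Remark~\ref{expl}, the unitary operator $\Op^\pi\colon L^2(\Oc)\to\Sg_2(\Hc)$ intertwines $\pi^{\#}$ with~$\bar\pi$, while Corollary~\ref{pseudo_prop}\eqref{pseudo_prop_item3} yields $\Op^\pi(\Phi)=(\cdot\mid\phi_2)\phi_1$, a rank-one operator of trace norm $\Vert\phi_1\Vert\cdot\Vert\phi_2\Vert=1$. Since the Weyl-Pedersen calculus extends to a weak$^*$-continuous map $\Sc'(\Oc)\to\Bc(\Hc)_\infty^*$ (Definition~\ref{moyal}), one obtains
\begin{equation*}
\Op^\pi(a)=\int\limits_{\gg_e\times\gg_e}(\Ac^{\#}_\Phi a)(X,Y)\,\bar\pi(\exp_{G\ltimes G}(X,Y))\bigl((\cdot\mid\phi_2)\phi_1\bigr)\,\de X\,\de Y.
\end{equation*}

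Finally, I would upgrade this identity to a Bochner integral in $\Sg_1(\Hc)$. For each $(X,Y)$ the operator $\bar\pi(\exp_{G\ltimes G}(X,Y))\Op^\pi(\Phi)=\pi(gh)\Op^\pi(\Phi)\pi(g)^{-1}$ (with the identification from Example~\ref{sd_ex2}\eqref{sd_ex2_item4}) has trace norm equal to~$1$, because left- and right-multiplication by unitary operators preserve the trace norm. Hence the integrand is bounded in $\Sg_1(\Hc)$ by $\vert(\Ac^{\#}_\Phi a)(X,Y)\vert$, whose integral is exactly $\Vert a\Vert_{M^{1,1}_\Phi(\pi^{\#})}$. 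The vector-valued integral thus converges absolutely in $\Sg_1(\Hc)$, must agree with $\Op^\pi(a)$ already identified above in the weak$^*$ sense of~$\Bc(\Hc)_\infty^*$, and satisfies the announced trace-norm bound.

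The only delicate point is matching the weak$^*$ reconstruction of $a$ in $\Sc'(\Oc)$ with a genuine Bochner integral in $\Sg_1(\Hc)$ after applying~$\Op^\pi$. This will be handled by the standard observation that if a Bochner integral in $\Sg_1(\Hc)$ exists, then via the trace-duality embedding $\Sg_1(\Hc)\hookrightarrow\Bc(\Hc)_\infty^*$ of~\eqref{smoothness_eq1} it also represents the corresponding weak$^*$ integral, so uniqueness of the linear isomorphism $\Op^\pi\colon\Sc'(\Oc)\to\Bc(\Hc)_\infty^*$ forces the two expressions to coincide.
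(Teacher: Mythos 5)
Your proposal is correct and follows essentially the same route as the paper's proof: reconstruct $a$ via the inversion formula of Corollary~\ref{orthog_distrib} applied to $\pi^{\#}$ with window $\Phi$, push the formula through $\Op^\pi$ using the intertwining with $\bar\pi$ and the rank-one identity $\Op^\pi(\Phi)=(\cdot\mid\phi_2)\phi_1$, and conclude by absolute convergence in $\Sg_1(\Hc)$ since each term has trace norm~$1$. Your explicit verification that $\Vert\Phi\Vert_{L^2(\Oc)}=1$ and your remark on reconciling the weak$^*$ integral with the Bochner integral via the embedding $\Sg_1(\Hc)\hookrightarrow\Bc(\Hc)_\infty^*$ are slightly more careful than the paper's write-up, but the argument is the same.
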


\begin{proof}
For arbitrary $a\in\Sc'(\Oc)$ we have by Corollary~\ref{orthog_distrib}\eqref{orthog_distrib_item1} 
and Remark~\ref{expl}, 
$$a=\iint\limits_{\gg_e\times\gg_e}(\Ac_\Phi^{\#}a)(X,Y)\cdot\pi^{\#}(\exp_{G\ltimes G}(X,Y))\Phi\de X\de Y, $$
whence by Corollary~\ref{pseudo_prop}
\begin{equation}\label{trace_class_eq1}
\Op^\pi(a)=\iint\limits_{\gg_e\times\gg_e}(\Ac_\Phi^{\#}a)(X,Y)\cdot\Op^\pi(\pi^{\#}(\exp_{G\ltimes G}(X,Y))\Phi)
\de X\de Y
\end{equation}
where the latter integral is weakly convergent in $\Lc(\Hc_\infty,\Hc_{-\infty})$. 
On the other hand, for arbitrary $X,Y\in\gg_e$ we get by \eqref{pi_diez_eq1} 
and Corollary~\ref{pseudo_prop}\eqref{pseudo_prop_item3}
$$\begin{aligned}
\Op^\pi(\pi^{\#}(\exp_{G\ltimes G}(X,Y))\Phi)
&=\pi(\exp_G(X+Y))\circ\Op^\pi(\Phi)\circ\pi(\exp_G X)^{-1} \\
&=(\cdot\mid\pi(\exp_G X)\phi_2)\pi(\exp_G(X+Y))\phi_1. 
\end{aligned}$$
In particular, $\Op^\pi(\pi^{\#}(\exp_{G\ltimes G}(X,Y))\Phi)\in\Sg_1(\Hc)$ 
and 
$$\Vert\Op^\pi(\pi^{\#}(\exp_{G\ltimes G}(X,Y))\Phi)\Vert_1
=\Vert\pi(\exp_G(X+Y))\phi_1\Vert\cdot\Vert\pi(\exp_G X)\phi_2\Vert=1.$$ 
It then follows that the integral in \eqref{trace_class_eq1} 
is absolutely convergent in $\Sg_1(\Hc)$ for $a\in M^{1,1}_\Phi(\pi^{\#})$ 
and moreover we have 
$$\Vert\Op^\pi(a)\Vert_1\le\iint\limits_{\gg_e\times\gg_e}\vert(\Ac_\Phi^{\#}a)(X,Y)\vert
\de X\de Y=\Vert a\Vert_{M^{1,1}_\Phi(\pi^{\#})}$$
which concludes the proof. 
\end{proof}

\section{The case of square-integrable representations}

In this section we focus on square-integrable representations of nilpotent Lie groups.
A discussion of the crucial role of these representations along with many examples 
can be found for instance in the monograph \cite{CG90}. 

\subsection{Independence of the modulation spaces on the window vectors}

\begin{lemma}\label{right_invar}
Let $G_1$ and $G_2$ be unimodular Lie groups and assume that 
we have a group homomorphism $\alpha\colon G_1\to\Aut G_2$, $g_1\mapsto\alpha_{g_1}$. 
Consider the semidirect product $G=G_1\ltimes_\alpha G_2$ 
and for every $h\in G$ and $\phi\colon G\to\CC$ 
define $R_h\phi\colon G\to\CC$, $(R_h\phi)(g)=\phi(gh)$. 
Fix $r,s\in[1,\infty]$ and consider the mixed-norm space 
$L^{r,s}(G)$ consisting of the equivalence classes of 
functions $\phi\colon G\to\CC$ such that 
$$\Vert\phi\Vert_{L^{r,s}(G)}:=
\Bigl(\int\limits_{G_2}\Bigl(\int\limits_{G_1}\vert\phi(g_1,g_2)\vert^r 
\de g_1\Bigr)^{s/r}\de g_2\Bigr)^{1/s}<\infty, $$
with the usual conventions if $r$ or $s$ is infinite. 
Then the space $L^{r,s}(G)$ is invariant under the right-translation operator $R_h$ for every $h\in G$, and the mapping 
$$\rho\colon G\to\Bc(L^{r,s}(G)),\quad h\mapsto R_h\vert_{L^{r,s}(G)}$$
is a strongly continuous representation of the Lie group $G$ by isometries 
on the Banach space $L^{r,s}(G)$.
\end{lemma}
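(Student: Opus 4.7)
The plan is to verify three things in turn: (1) each operator $R_h$ is an isometry on $L^{r,s}(G)$, which makes $\rho$ well-defined with values in the isometry group of that Banach space; (2) $\rho$ is a group homomorphism, which is immediate from the definition of right translation; (3) $\rho$ is strongly continuous.

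For (1), fix $h=(h_1,h_2)\in G$ and use \eqref{sd_prod} to rewrite
\begin{equation*}
(R_h\phi)(g_1,g_2)=\phi(g_1h_1,\alpha_{h_1^{-1}}(g_2)h_2).
\end{equation*}
I compute the mixed norm $\Vert R_h\phi\Vert_{L^{r,s}(G)}^s$ as an iterated integral and change variables. In the inner integral over $g_1$, the substitution $g_1\mapsto g_1h_1^{-1}$ is Haar-measure-preserving because $G_1$ is unimodular, which removes the $h_1$-dependence in the first slot. In the resulting outer integral over $g_2$, I first substitute $g_2\mapsto g_2h_2^{-1}$ (unimodularity of $G_2$) and then apply $g_2\mapsto\alpha_{h_1}(g_2)$. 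The last change of variables is measure-preserving as soon as the automorphism $\alpha_{h_1}$ of $G_2$ preserves Haar measure; in the settings of interest this is automatic, since when $\dot\alpha$ takes values in the nilpotent derivations (cf. Remark~\ref{sd_rem}) one has $\det\Lie(\alpha_{h_1})=\ee^{\tr\dot\alpha(\log_{G_1}h_1)}=1$, and in the motivating case of Example~\ref{sd_ex2} one has $\det\Ad_G(h_1)=1$ for every $h_1$ in a nilpotent Lie group. Putting the three substitutions together yields $\Vert R_h\phi\Vert_{L^{r,s}(G)}=\Vert\phi\Vert_{L^{r,s}(G)}$.

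Point (2) is essentially bookkeeping: $(R_hR_k\phi)(g)=(R_k\phi)(gh)=\phi(ghk)=(R_{hk}\phi)(g)$, so $\rho(hk)=\rho(h)\rho(k)$ and $\rho(e)=\id$.

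For (3), I use the isometry property from (1) together with a standard $3\ep$ density argument. It suffices to prove that $\Vert R_{h_n}\phi-\phi\Vert_{L^{r,s}(G)}\to 0$ whenever $h_n\to e$ in $G$ and $\phi$ lies in a dense subspace of $L^{r,s}(G)$. For $1\le r,s<\infty$ the space $C_c(G)$ of continuous compactly supported functions is dense, and for $\phi\in C_c(G)$ supported in a compact set $K$, all translates $R_h\phi$ with $h$ in a fixed compact neighborhood $V$ of $e$ are supported in the fixed compact set $KV^{-1}$; uniform continuity of $\phi$ then gives $\Vert R_h\phi-\phi\Vert_\infty\to0$ as $h\to e$, which upgrades to convergence in $L^{r,s}(G)$ because of the uniformly bounded support. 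The $3\ep$-argument combined with the isometry extends this to all of $L^{r,s}(G)$.

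The main obstacle is the measure-theoretic step inside (1), namely justifying that the automorphism $\alpha_{h_1}$ preserves the Haar measure on $G_2$; this is where the particular structure of the semidirect products appearing in this paper (where $\alpha$ is a conjugation in a nilpotent Lie group, hence unipotent) is implicitly used. The strong-continuity part is routine once one restricts to the regime where $C_c(G)$ is dense in $L^{r,s}(G)$; for $r=\infty$ or $s=\infty$ one can only expect weak-$*$ continuity.
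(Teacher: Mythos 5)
Your proof is correct and follows essentially the same route as the paper: the key step is the change of variables showing $\Vert R_h\phi\Vert_{L^{r,s}(G)}=\Vert\phi\Vert_{L^{r,s}(G)}$, after which the homomorphism property and strong continuity are routine. In fact you are more careful than the paper on the one delicate point: the paper justifies the outer change of variables $g_2\mapsto\alpha_{h_1^{-1}}(g_2)h_2$ by unimodularity of $G_2$ alone, which handles the right translation by $h_2$ but not the automorphism $\alpha_{h_1^{-1}}$; your observation that one needs $\det\Lie(\alpha_{h_1})=1$, automatic in the unipotent/nilpotent setting where the lemma is applied, closes that gap. Your caveat that for $r=\infty$ or $s=\infty$ one only gets the isometry property (strong continuity requiring density of $C_c(G)$) is likewise a legitimate refinement of the statement as written.
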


\begin{proof}
Let $\phi\colon G\to\CC$ be any measurable function 
and $h=(h_1,h_2)\in G$. 
We have $(R_h\phi)(g_1,g_2)=\phi(g_1h_1,\alpha_{h_1^{-1}}(g_2)h_2)$.  
Since the group $G_1$ is unimodular, it then follows that 
for every $g_2\in G_2$ we have 
$$\int\limits_{G_1}\vert(R_h\phi)(g_1,g_2)\vert^r \de g_1
=\int\limits_{G_1}\vert\phi(g_1h_1,\alpha_{h_1^{-1}}(g_2)h_2)\vert^r \de g_1
=\int\limits_{G_1}\vert\phi(g_1,\alpha_{h_1^{-1}}(g_2)h_2)\vert^r \de g_1. $$
Now, by integrating on $G_2$ both extreme terms in above equality 
and taking into account that $G_2$ is unimodular, we get 
$$(\forall h\in G)\quad 
\Vert R_h\phi\Vert_{L^{r,s}(G)}=\Vert\phi\Vert_{L^{r,s}(G)}. $$
With this equality at hand, it is straightforward to 
prove all the assertions in the statement just as in the classical case 
when $r=s$.
\end{proof}

\begin{remark}\label{integral}
\normalfont
In the setting of Lemma~\ref{right_invar}, 
for every $\psi\in L^1(G)$ we can define 
the bounded linear operator $\rho(\psi)\colon L^{r,s}(G)\to L^{r,s}(G)$ by 
$$(\forall \chi\in L^{r,s}(G))\quad 
\rho(\psi)\chi=\int\limits_G \psi(h)R_h\chi\de h. $$
Then for every $\chi\in L^{r,s}(G)$ we have 
$$(\rho(\psi)\chi)(g)=\int\limits_G \chi(gh)\psi(h)\de h 
\text{ for a.e. }g\in G$$
and 
$$\Vert\rho(\psi)\phi\Vert_{L^{r,s}(G)}\le C\Vert\phi\Vert_{L^{r,s}(G)}, $$
where $C$ denotes the norm of the operator~$\rho(\psi)$, 
hence $C\le\Vert\psi\Vert_{L^1(G)}$. 
\qed
\end{remark}

We are now ready to prove a theorem that covers many cases 
when the modulation spaces for square-integrable representations 
do not depend on the choice of a window function. 
The second stage in the proof is inspired by the methods 
of the theory of coorbit spaces 
(see \cite{FG88}, \cite{FG89a}, \cite{FG89b}, and also 
the proof of Prop.~11.3.2(c) in \cite{Gr01}).

\begin{theorem}\label{indep_sq}
Let $G_1$ and $G_2$ be simply connected, nilpotent Lie groups 
and a unipotent homomorphism $\alpha\colon G_1\to\Aut G_2$. 
Define $G=G_1\ltimes_\alpha G_2$ and assume that the center $\zg$ 
of $\gg$ satisfies the condition 
\begin{equation}\label{indep_sq_eq1}
\zg=(\zg\cap\gg_1)+(\zg\cap\gg_2).
\end{equation} 
Assume the irreducible representation $\pi\colon G\to\Bc(\Hc)$ 
is square integrable modulo the center~of $G$, and 
pick any Jordan-H\"older basis in $\gg$ such that for 
the corresponding predual $\gg_e$ for the coadjoint orbit associated with $\pi$ 
we have $\gg_e=(\gg_e\cap\gg_1)+(\gg_e\cap\gg_2)$.

Then the modulation spaces for the representation $\pi$ 
with respect to the decomposition 
$\gg_e\simeq(\gg_e\cap\gg_1)\times(\gg_e\cap\gg_2)$
are independent on the choice of a window vector 
$\phi\in\Hc_\infty\setminus\{0\}$. 
\end{theorem}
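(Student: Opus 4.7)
The plan is to show that for any two unit vectors $\phi,\psi\in\Hc_\infty$ the continuous inclusion $M^{r,s}_\phi(\pi)\hookrightarrow M^{r,s}_\psi(\pi)$ holds via the norm estimate $\|f\|_{M^{r,s}_\psi}\le\|\Ac_\psi\phi\|_{L^1(\gg_e)}\cdot\|f\|_{M^{r,s}_\phi}$; swapping $\phi$ and $\psi$ then gives the reverse inclusion, and non-normalized windows are handled by rescaling. The finiteness of the right-hand factor follows from Corollary~\ref{orthog_distrib}\eqref{orthog_distrib_item3}: since $\phi\in\Hc_\infty$ and $\psi$ is a unit window, $\Ac_\psi\phi\in\Sc(\gg_e)\subset L^1(\gg_e)$. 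The starting point of the estimate is the reproducing formula of Corollary~\ref{orthog_distrib}\eqref{orthog_distrib_item1}: substituting $f=\int_{\gg_e}(\Ac_\phi f)(X)\,\pi(\exp_G X)\phi\,\de X$ into the definition of $\Ac_\psi f$ gives
$$(\Ac_\psi f)(Y)=\int_{\gg_e}(\Ac_\phi f)(X)\,K(X,Y)\,\de X,\qquad K(X,Y):=(\pi(\exp_G X)\phi\mid\pi(\exp_G Y)\psi),$$
which I would first justify on the dense subspace $\Hc_\infty$ (where convergence in the Fr\'echet topology of $\Hc_\infty$ by Corollary~\ref{orthog_distrib}\eqref{orthog_distrib_item2} legitimizes Fubini) and then extend by the a~priori bound that is the target of the argument.

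The second step is to identify $|K(X,Y)|$ with a right-translate of $|\Ac_\psi\phi|$ on the group $G/Z$. Writing $\pi(\exp_G X)^{-1}\pi(\exp_G Y)=\pi(\exp_G((-X)\ast Y))$ with $\ast$ the polynomial Baker--Campbell--Hausdorff product on the nilpotent algebra $\gg$, and splitting $(-X)\ast Y=W(X,Y)+\zeta(X,Y)$ along the vector-space decomposition $\gg=\gg_e\oplus\zg$ (available because square-integrability modulo $Z$ forces $\gg_{\xi_0}=\zg$, hence $\dim\gg_e=\dim\gg-\dim\zg$), the central summand $\zeta(X,Y)$ acts through the unitary central character. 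One therefore obtains $|K(X,Y)|=|(\Ac_\psi\phi)(W(X,Y))|$, and the integral in the previous display becomes a right-convolution on $\gg_e\simeq G/Z$ with respect to the group law induced by BCH.

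The final step applies Lemma~\ref{right_invar} and Remark~\ref{integral} to convert this pointwise representation into the desired norm bound. Here the hypotheses $\zg=(\zg\cap\gg_1)+(\zg\cap\gg_2)$ and $\gg_e=(\gg_e\cap\gg_1)+(\gg_e\cap\gg_2)$ intervene in an essential way: the first gives $Z=(Z\cap G_1)\times(Z\cap G_2)$ (the two commuting central factors exponentiating independently), so that $G/Z$ inherits the semidirect-product structure $(G_1/(Z\cap G_1))\ltimes(G_2/(Z\cap G_2))$; the second ensures that the splitting $\gg_e=\gg_e^1\dotplus\gg_e^2$ with $\gg_e^j:=\gg_e\cap\gg_j$ is precisely the one induced infinitesimally by this semidirect decomposition. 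Consequently the mixed-norm space $L^{r,s}(\gg_e^1\times\gg_e^2)$ entering Definition~\ref{modular_def} coincides with the one of Lemma~\ref{right_invar} for $G/Z$, and Remark~\ref{integral} delivers $\|\Ac_\psi f\|_{L^{r,s}}\le\|\Ac_\psi\phi\|_{L^1}\cdot\|\Ac_\phi f\|_{L^{r,s}}$, as claimed. The main obstacle I anticipate lies in Step~2: one must verify carefully that the BCH product descends to a group law on $\gg_e$ that matches the induced semidirect-product structure on $G/Z$, and that the normalized Lebesgue measure on $\gg_e$ chosen in Definition~\ref{amb} is the corresponding Haar measure, so that the change of variable $X\mapsto W(X,Y)$ is measure-preserving and Lemma~\ref{right_invar} genuinely applies in the form needed.
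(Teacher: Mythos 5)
Your proposal follows essentially the same route as the paper's own proof: the reproducing formula from Corollary~\ref{orthog_distrib} turns $\Ac_\psi f$ into a twisted right-convolution of $\Ac_\phi f$ against $\Ac_\psi\phi\in\Sc(\gg_e)\subset L^1(\gg_e)$ on the quotient group $G/Z$, and the two structural hypotheses are used exactly as you describe --- to identify $G/Z$ with $(G_1/(Z\cap G_1))\ltimes(G_2/(Z\cap G_2))$ so that Lemma~\ref{right_invar} and Remark~\ref{integral} apply to the mixed-norm space $L^{r,s}(\gg_e^1\times\gg_e^2)$. The points you flag as needing care (the BCH group law on $\gg_e\simeq\gg/\zg$, the unit-modulus central-character phase, and the Haar/Lebesgue measure identification) are precisely the ones the paper settles in the first stage of its argument, so the proposal is correct.
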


\begin{proof} 
The proof has two stages. 

$1^\circ$ 
For the sake of simplicity let us identify the Lie group $G_j$ to its Lie algebra~$\gg_j$ by means of 
the exponential map $\exp_{G_j}$, so that $G_j$ will be just $\gg_j$ with the group operation $\ast$ 
defined by the Baker-Campbell-Hausdorff series. 
Let $Z$ be the center of $G$, whose Lie algebra $\zg$ is the center of $\gg$. 
Then we have a linear isomorphism $\gg_e\simeq\gg/\zg$, $X\mapsto X+\zg$, 
and we shall endow $\gg_e$ with the Lie algebra structure which makes this 
map into an isomorphism of Lie algebras. 

If we define $G_e:=G/Z$, then $G_e$ is a connected, simply connected nilpotent Lie group, 
whose Lie algebra is just $\gg_e$. 
Let $\ast_e$ denote the multiplication in $G_e$, which is just 
the Baker-Campbell-Hausdorff multiplication in $\gg_e$. 

Now use assumption \eqref{indep_sq_eq1} to see that 
if $(Y_1,Y_2)\in\zg\subseteq\gg=\gg_1\ltimes_{\dot\alpha}\gg_2$, 
then $(Y_1,0),(0,Y_2)\in\zg$. 
Now formula~\eqref{sd_bracket} shows that 
for every $(X_1,X_2)\in\gg$ we have 
$0=[(X_1,X_2),(Y_1,0)]=([X_1,Y_1],-\dot\alpha(Y_1)X_2)$, 
hence $Y_1$ belongs to the center $\zg_1$ of $\gg_1$ and 
$\dot\alpha(Y_1)=0$. 
This shows that the closed subgroup $Z_1:=Z\cap G_1$ is contained in the center of $G_1$ and satisfies 
\begin{equation}\label{indep_sq_proof_eq1}
Z_1\subseteq\Ker\alpha. 
\end{equation}
Also
$0=[(X_1,X_2),(0,Y_2)]=(0,\dot\alpha(X_1)Y_2+[X_2,Y_2])$ 
for every $(X_1,X_2)\in\gg$, 
whence we see that $Y_2$ belongs both to the center $\zg_2$ of $G_2$ 
and to $\Ker(\dot\alpha(X_1))$ for arbitrary $X_1\in\gg_1$. 
Therefore the closed subgroup $Z_2:=Z\cap G_2$ is contained in the center of $G_2$ 
and we have 
\begin{equation}\label{indep_sq_proof_eq2}
(\forall g_1\in G_1)\quad  \alpha_{g_1}(Z_2)\subseteq Z_2. 
\end{equation}
It follows by \eqref{indep_sq_proof_eq1} and \eqref{indep_sq_proof_eq2} 
that the group homomorphism $\alpha\colon G_1\to\Aut G_2$ induces 
a group homomorphism $\bar\alpha\colon G_1/Z_1\to\Aut(G_2/Z_2)$ 
and we have the isomorphisms of Lie groups 
$$G_e\simeq G/Z\simeq(G_1/Z_1)\ltimes_{\bar\alpha}(G_2/Z_2).$$
Moreover $Z\simeq Z_1\times Z_2$. 

$2^\circ$ We now come back to the proof. 
Fix $r,s\in[1,\infty]$ and let $\phi_1,\phi_2\in\Hc_\infty$ 
be any window functions with $\Vert\phi_1\Vert=\Vert\phi_2\Vert=1$. 
For $j=1,2$ and every $f\in\Hc_{-\infty}$ we have by Corollary~\ref{orthog_distrib} 
\allowdisplaybreaks
\begin{align}
(\Ac_{\phi_2}f)(X)
&=(f\mid\pi(\exp_G X)\phi_2) \nonumber\\
&=\int\limits_{\gg_e}\chi(Y)
(\pi(\exp_G Y)\phi_1\mid\pi(\exp_G X)\phi_2)\de Y \nonumber\\
&=\int\limits_{\gg_e}\chi(Y)
(\phi_1\mid\pi(\exp_G((-Y)\ast X))\phi_2)\de Y \nonumber\\
&=\int\limits_{\gg_e}\chi(Y)\ee^{\ie\alpha(-Y,X)}
(\phi_1\mid
 \pi(\exp_G ((-Y)\ast_e X))\phi_2)\de Y, \nonumber\\
&=\int\limits_{\gg_e}\chi(Y)\ee^{\ie\alpha(-Y,X)}
(\Ac_{\phi_2}\phi_1)((-Y)\ast_e X)\de Y, \nonumber\\
&=\int\limits_{\gg_e}\chi(X\ast_e Y)
\ee^{\ie\alpha((-Y)\ast_e(-X),X)}
(\Ac_{\phi_2}\phi_1)(-Y)\de Y, \nonumber
\end{align}
for every $X\in\gg_e$, 
where $\chi:=\Ac_{\phi_1}f\in L^{r,s}(\gg_e\times\gg_e)$ 
and $\alpha\colon\gg_e\times\gg_e\to\RR$ is a suitable polynomial function 
defined in terms of the central character of the representation~$\pi$ 
(see e.g., \cite{Ma07}). 
Now note that $\Ac_{\phi_2}\phi_1\in\Sc(\gg_e)$ 
by Corollary~\ref{orthog_distrib}\eqref{orthog_distrib_item3}. 
It then follows by Lemma~\ref{right_invar} and Remark~\ref{integral} 
that there exists a constant $C>0$ such that for every $f\in\Hc_{-\infty}$ we have 
$\Vert\Ac_{\phi_2}f\Vert_{L^{r,s}(\gg_e\times\gg_e)}\le \Vert\Ac_{\phi_1}f\Vert_{L^{r,s}(\gg_e\times\gg_e)}$. 
Thus we get the continuous inclusion map $M^{r,s}_{\phi_1}(\pi)\hookrightarrow M^{r,s}_{\phi_2}(\pi)$. 
Now the conclusion follows by interchanging $\phi_1$ and $\phi_2$. 
\end{proof}

The previous theorem allows us to omit the window vector in the notation for  modulation spaces 
associated to square-integrable representations. 

\begin{example}\label{indep_sq_ex}
\normalfont
Theorem~\ref{indep_sq} applies to a wide variety of situations. 
Let us mention here just a few of them: 
\begin{enumerate}
\item In the case of the Schr\"odinger representation 
of the Heisenberg group $\HH_{2n+1}=\RR^n\ltimes\RR^{n+1}$ 
we recover the well-known property that the classical modulation spaces 
used in the time-frequency analysis 
are independent on the choice of a window function 
(see for instance Prop.~11.3.2(c) in \cite{Gr01}). 
\item We shall see below (see subsection~\ref{subsect3.3}) that one can give sufficient 
conditions for  
the continuity of the operators constructed by the Weyl-Pedersen calculus for
the square-integrable representation $\pi\colon G\to\Bc(\Hc)$ 
by using spaces of symbols which are modulation spaces 
$M^{r,s}(\pi^{\#})\subseteq\Sc'(\Oc)$. 
Here $\pi^{\#}\colon G\ltimes G\to\Bc(L^2(\Oc))$ 
is in turn a square integrable representation to which Theorem~\ref{indep_sq} 
applies and ensures that the corresponding modulation spaces 
do not depend on the choice of a window function. 
\end{enumerate}
\qed
\end{example}

\subsection{Covariance properties of the Weyl-Pedersen calculus}

We now record the covariance property for the cross-Wigner distributions 
and its consequence for the Weyl-Pedersen calculus. 
In the very special case of the Schr\"odinger representation for the Heisenberg group 
we recover a classical fact (see e.g., \cite{Gr01}). 

\begin{theorem}\label{cov}
Assume that 
%$\gg_1$ is equal to the center of $\gg$, $\hake{\xi_0,X_1}=1$, and 
the representation $\pi\colon G\to\Bc(\Hc)$ associated with $\Oc$ 
is square integrable modulo the center~of $G$. 
Then the following assertions hold: 
\begin{enumerate}
\item\label{cov_item1} 
For every $f,h\in\Hc$ and  $X\in\gg$  we have 
$$\Wig(\pi(\exp_G X)f,\pi(\exp_G X)h)(\xi)=\Wig(f,h)(\xi\circ\ee^{\ad_{\gg}X}) 
\text{ for a.e. }\xi\in\Oc.$$
\item\label{cov_item2} 
For every symbol $a\in\Sc'(\Oc)$ and arbitrary $g\in G$ we have 
$$\Op(a\circ\Ad_G^*(g^{-1})\vert_{\Oc})=\pi(g)\Op(a)\pi(g)^{-1}. $$
\end{enumerate}
\end{theorem}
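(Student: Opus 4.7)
The plan is to establish assertion~(\ref{cov_item2}) first, for Schwartz symbols by a direct Fourier calculation and then for tempered distributions by duality, and to deduce~(\ref{cov_item1}) from~(\ref{cov_item2}) by invoking the rank-one operator formula for the Weyl--Pedersen symbol of a Wigner distribution.

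Since $\pi$ is square integrable modulo the center, the stabilizer $\gg_{\xi_0}$ equals $\zg$, so $\gg_e$ is a linear complement to $\zg$ in $\gg$. For $g\in G$ and $X\in\gg_e$ I will decompose $\Ad_G(g)X=T_g X+S_g X$ with $T_g X\in\gg_e$ and $S_g X\in\zg$. The two inputs that drive the calculation are: $\Ad_G(g)$ is unipotent on $\gg$, hence the induced linear map $T_g$ on $\gg/\zg\simeq\gg_e$ is also unipotent and has determinant one; and $\hake{\xi,Z}=\hake{\xi_0,Z}$ for every $\xi\in\Oc$ and $Z\in\zg$, which yields both $\pi(\exp_G\Ad_G(g)X)=\chi(\exp_G S_g X)\,\pi(\exp_G T_g X)$ via the central character $\chi(\exp_G Z)=\ee^{\ie\hake{\xi_0,Z}}$, and $\widehat{a}(Y+Z)=\ee^{-\ie\hake{\xi_0,Z}}\widehat{a}(Y)$ when $\widehat{a}$ is extended from $\gg_e$ to $\gg$ by the same integral formula.

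For $a\in\Sc(\Oc)$, I will start from~\eqref{calc_formula}, use the intertwining $\pi(g)\pi(\exp_G X)\pi(g)^{-1}=\pi(\exp_G\Ad_G(g)X)$, apply the decomposition above, and change variables $Y=T_g X$ (with unit Jacobian) to rewrite $\pi(g)\Op^\pi(a)\pi(g)^{-1}$ as the integral of $\widehat{a}(T_g^{-1}Y)\,\chi(\exp_G S_g(T_g^{-1}Y))\,\pi(\exp_G Y)$ over $\gg_e$. On the other side, the coadjoint invariance of the Liouville measure gives $\widehat{a\circ\Ad_G^*(g^{-1})\vert_\Oc}(Y)=\widehat{a}(\Ad_G(g^{-1})Y)$, and applying $\Ad_G(g^{-1})$ to $\Ad_G(g)(T_g^{-1}Y)=Y+S_g(T_g^{-1}Y)$ yields $\Ad_G(g^{-1})Y=T_g^{-1}Y-S_g(T_g^{-1}Y)$; the extended Fourier formula then makes the two integrands coincide. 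To reach $a\in\Sc'(\Oc)$ I will pair with $\Op^\pi(b)$ for $b\in\Sc(\Oc)$ via~\eqref{duality}, use the already-proven Schwartz case applied with $g^{-1}$ to write $\pi(g)^{-1}\Op^\pi(b)\pi(g)=\Op^\pi(b\circ\Ad_G^*(g)\vert_\Oc)$, and invoke coadjoint invariance once more to identify $(\pi(g)\Op^\pi(a)\pi(g)^{-1})(\Op^\pi(b))$ with $\Op^\pi(a\circ\Ad_G^*(g^{-1})\vert_\Oc)(\Op^\pi(b))$; the bijectivity of $\Op^\pi\colon\Sc(\Oc)\to\Bc(\Hc)_\infty$ recalled in Definition~\ref{moyal} then concludes~(\ref{cov_item2}).

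Finally, for~(\ref{cov_item1}), Remark~\ref{wig_ext} (in the form~\eqref{wig_ext_eq1}) gives $\Op^\pi(\Wig(f,h))=(\cdot\mid h)f$ for $f,h\in\Hc$. A one-line computation gives $\pi(g)\bigl((\cdot\mid h)f\bigr)\pi(g)^{-1}=(\cdot\mid\pi(g)h)\pi(g)f=\Op^\pi(\Wig(\pi(g)f,\pi(g)h))$, so combining with~(\ref{cov_item2}) and using the injectivity of $\Op^\pi$ on $L^2(\Oc)$ produces $\Wig(\pi(g)f,\pi(g)h)=\Wig(f,h)\circ\Ad_G^*(g^{-1})\vert_\Oc$; the identity $\Ad_G^*(g^{-1})\xi=\xi\circ\Ad_G(g)=\xi\circ\ee^{\ad_\gg X}$ for $g=\exp_G X$ then is exactly~(\ref{cov_item1}). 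I expect the main obstacle to be the bookkeeping in the Schwartz-symbol step: because $\Ad_G(g)$ does not preserve $\gg_e$, the $\zg$-correction produced by $\Ad_G(g)X=T_g X+S_g X$ must be absorbed precisely by the central-character factor coming from $\pi(\exp_G\Ad_G(g)X)$, and the two ingredients that force this cancellation are the unipotence of $\Ad_G(g)$ (giving $|\det T_g|=1$) and the constancy of $\hake{\cdot,Z}$ on $\Oc$ for $Z\in\zg$.
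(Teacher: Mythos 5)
Your proposal is correct, but it runs the argument in the opposite direction from the paper. The paper proves assertion~(\ref{cov_item1}) first, by a direct change-of-variables computation inside the Fourier integral $\Wig(f,h)(\xi)=\int_{\gg_e}\ee^{\ie\hake{\xi,X}}(f\mid\pi(\exp_G X)h)\,\de X$: it splits $\ee^{\ad_{\gg}(-X_0)}X$ into its $\gg_e$- and $\zg$-components, extracts the central-character phase, and checks that the phases cancel; assertion~(\ref{cov_item2}) is then deduced from~(\ref{cov_item1}) via the pairing $(\Op(a)\phi\mid f)=(a\mid\Wig(f,\phi))_{L^2(\Oc)}$ of Corollary~\ref{pseudo_prop} and the coadjoint invariance of the Liouville measure. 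You instead prove~(\ref{cov_item2}) first by performing the analogous computation on the operator side of~\eqref{calc_formula} (conjugation giving $\pi(\exp_G\Ad_G(g)X)$, the decomposition $\Ad_G(g)X=T_gX+S_gX$, the unit Jacobian from unipotence, and the relation $\widehat{a}(Y+Z)=\ee^{-\ie\hake{\xi_0,Z}}\widehat{a}(Y)$ for central $Z$), and then derive~(\ref{cov_item1}) from~(\ref{cov_item2}) using $\Op^\pi(\Wig(f,h))=(\cdot\mid h)f$ from~\eqref{wig_ext_eq1} and the injectivity of $\Op^\pi$. The two computations are exact duals of each other and rest on the same three structural facts (namely $\gg=\gg_e\dotplus\zg$ from square integrability, unipotence of $\Ad_G(g)$, and constancy of $\hake{\cdot,Z}$ on $\Oc$ for $Z\in\zg$); I verified that your phase bookkeeping closes, since $\Ad_G(g^{-1})Y=T_g^{-1}Y-S_g(T_g^{-1}Y)$ turns the correction $\ee^{\ie\hake{\xi_0,S_gT_g^{-1}Y}}$ into exactly the factor produced by the central character. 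What your order buys is that the covariance of $\Op^\pi$ appears as the primary statement and the Wigner covariance drops out as a one-line corollary of the rank-one formula; the cost is that you must invoke the machinery of Remark~\ref{wig_ext} and the bijectivity of $\Op^\pi$ on $\Sc'(\Oc)$, whereas the paper's route needs only the elementary pairing of Corollary~\ref{pseudo_prop}\eqref{pseudo_prop_item2} once~(\ref{cov_item1}) is in hand.
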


\begin{proof} 
\eqref{cov_item1}
Note that 
the following assertions hold: 
\allowdisplaybreaks 
\begin{align}
\label{cov_proof_eq1} 
(\forall X\in\zg)\quad  & \pi(\exp_G X)=\ee^{\ie\hake{\xi_0,X}}\id_{\Hc},\\
\label{cov_proof_eq2} 
(\forall \xi\in\Oc)\quad  &\xi\vert_{\zg}=\xi_0\vert_{\zg}, \\
\label{cov_proof_eq3} & \xi_0\vert_{\gg_e}=0.
\end{align}
Also, it easily follows by Definition~\ref{amb} that for arbitrary $f,h\in\Hc$ we have 
$$\Wig(f,h)(\xi)=\int\limits_{\gg_e}\ee^{\ie\hake{\xi,X}}(f\mid\pi(\exp_G X)h)\de X
\text{ for a.e. }\xi\in\Oc. $$
It then follows that for arbitrary $X_0\in\gg_e$ and a.e.\ $\xi\in\Oc$ we have
$$\begin{aligned}
\Wig(\pi(\exp_G X_0)f,&\pi(\exp_G X_0)h)(\xi) \\
&=\int\limits_{\gg_e}\ee^{\ie\hake{\xi,X}}(f\mid\pi((\exp_G(-X_0))(\exp_G X)(\exp_G X_0))h)\de X \\
&=\int\limits_{\gg_e}\ee^{\ie\hake{\xi,X}}(f\mid\pi(\exp_G (\ee^{\ad_{\gg}(-X_0)}X))h)\de X.
\end{aligned}$$ 
If we denote by $\pr_{\zg}\colon\gg\to\zg$ the natural projection 
corresponding to the direct sum decomposition $\gg=\zg\dotplus\gg_e$, then we have 
for every $X\in\gg_e$, 
$$\ee^{\ad_{\gg}(-X_0)}X=\ee^{\ad_{\gg_e}(-X_0)}X+\pr_{\zg}(\ee^{\ad_{\gg}(-X_0)}X), $$
where we have endowed $\gg_e$ with the Lie algebra structure 
which makes the linear isomorphism $\gg_e\simeq\gg/\zg$ into an isomorphism of Lie algebras 
(see also~\cite{Ma07}). 
Therefore, by using \eqref{cov_proof_eq1} and \eqref{cov_proof_eq3}, we get 
$$(\forall X\in\gg_e)\quad 
\pi((\exp_G (\ee^{\ad_{\gg}(-X_0)}X)))
=\ee^{\ie\hake{\xi_0,\ee^{\ad_{\gg}(-X_0)}X}}\pi(\exp_G (\ee^{\ad_{\gg_e}(-X_0)}X))$$
and then the above computation leads to  
$$\begin{aligned}
\Wig(\pi(\exp_G X_0)f,&\pi(\exp_G X_0)h)(\xi) \\
&=\int\limits_{\gg_e}\ee^{\ie\hake{\xi,X}}\ee^{-\ie\hake{\xi_0,\ee^{\ad_{\gg}(-X_0)}X}}
(f\mid\pi(\exp_G (\ee^{\ad_{\gg_e}(-X_0)}X))h)\de X\\
&=\int\limits_{\gg_e}\ee^{\ie\hake{\xi,X}}\ee^{-\ie\hake{(\ee^{\ad_{\gg}(-X_0)})^*\xi_0,X}}
(f\mid\pi(\exp_G (\ee^{\ad_{\gg_e}(-X_0)}X))h)\de X\\
&=\int\limits_{\gg_e}\ee^{\ie\hake{\xi,\ee^{\ad_{\gg_e}X_0}Y}}
\ee^{-\ie\hake{(\ee^{\ad_{\gg}(-X_0)})^*\xi_0,\ee^{\ad_{\gg_e}X_0}Y}}
(f\mid\pi(\exp_G Y)h)\de Y
\end{aligned}$$ 
where we used the change of variables $X\mapsto Y=\ee^{\ad_{\gg_e}(-X_0)}X$, 
which is a measure-preserving diffeomorphism since $\gg_e$ is a nilpotent Lie algebra. 
Now note that by using \eqref{cov_proof_eq2} we get for a.e.\ $\xi\in\Oc$ and every $Y\in\gg_e$, 
\allowdisplaybreaks
\begin{align}
\hake{\xi,\ee^{\ad_{\gg_e}X_0}Y}-\hake{(\ee^{\ad_{\gg}(-X_0)})^*\xi_0,&\ee^{\ad_{\gg_e}X_0}Y} \nonumber\\
=&\hake{\xi,\ee^{\ad_{\gg}X_0}Y}-\hake{\xi,\pr_{\zg}(\ee^{\ad_{\gg}X_0}Y)}  \nonumber\\
&-\hake{\xi_0,\ee^{\ad_{\gg}(-X_0)}(\ee^{\ad_{\gg}X_0}Y-\pr_{\zg}(\ee^{\ad_{\gg}X_0}Y))} \nonumber\\
=&\hake{\xi,\ee^{\ad_{\gg}X_0}Y}-\hake{\xi_0,\ee^{\ad_{\gg}X_0}Y}  \nonumber\\
&-\hake{\xi_0,Y}+\hake{\xi_0,\ee^{\ad_{\gg}X_0}Y} \nonumber\\
=&\hake{\xi,\ee^{\ad_{\gg}X_0}Y} \nonumber
\end{align}
since $\hake{\xi_0,Y}=0$ by \eqref{cov_proof_eq3}. 
Thus the conclusion follows by the formula we had obtained above for 
$\Wig(\pi(\exp_G X_0)f,\pi(\exp_G X_0)h)(\xi)$, 
and this completes the proof for $X\in\gg_e$. 
Then the formula extends to arbitrary $X\in\gg$ by using 
the fact that $\gg=\gg_e\dotplus\zg$ and taking into account~\eqref{cov_proof_eq1}. 

\eqref{cov_item2} 
If $a\in\Sc(\Oc)$, then for every $f,\phi\in\Hc$ we have 
$$\begin{aligned}
(\Op(a\circ\Ad_G^*(g^{-1})\vert_{\Oc})\phi\mid f)_{\Hc}
&=(a\circ\Ad_G^*(g^{-1})\vert_{\Oc}\mid\Wig(f,\phi))_{L^2(\Oc)} \\
&=(a\mid\Wig(f,\phi)\circ\Ad_G^*(g)\vert_{\Oc})_{L^2(\Oc)} \\
&=(a\mid\Wig(\pi(g)^{-1}f,\pi(g)^{-1}\phi))_{L^2(\Oc)} \\
&=(\Op(a)\pi(g)^{-1}\phi\mid\pi(g)^{-1}f)_{\Hc} \\
&=(\pi(g)\Op(a)\pi(g)^{-1}\phi\mid f)_{\Hc}, 
\end{aligned}$$
where the first and the fourth equalities follow by 
Corollary~\ref{pseudo_prop}\eqref{pseudo_prop_item2}, 
the second equality is a consequence of the fact that 
the coadjoint action preserves the Liouville measure on $\Oc$, 
while the third equality follows by Assertion~\eqref{cov_item1} 
which we already proved. 

Thus we obtained the conclusion for $a\in\Sc(\Oc)$, 
and then it can be easily extended by duality to any $a\in\Sc'(\Oc)$ 
by using equation~\eqref{duality} in Definition~\ref{calc_def}. 
\end{proof}

\subsection{Continuity of Weyl-Pedersen calculus}\label{subsect3.3}

\begin{lemma}\label{sq_pi_bar}
Let $G$ be any Lie group with a unitary irreducible representation 
$\pi\colon G\to\Bc(\Hc)$  and  
$$\bar\pi\colon G\ltimes G\to\Bc(\Sg_2(\Hc)),\quad \bar\pi(g,h)T=\pi(gh)T\pi(g)^{-1}.$$
 If $G$ is a unimodular group and $\pi$ is square integrable modulo the center of~$G$, 
then $\bar\pi$ is square integrable modulo the center of $G\ltimes G$. 
\end{lemma}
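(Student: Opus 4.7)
The plan is to reduce the square-integrability of $\bar\pi$ to that of the external tensor representation $\pi^{\otimes 2}\colon G\times G\to\Bc(\Sg_2(\Hc))$, using the Lie group isomorphism $\mu\colon G\times G\to G\ltimes G$, $\mu(g,h)=(gh,g)$, from Example~\ref{sd_ex2}\eqref{sd_ex2_item4}. By Lemma~\ref{pi_bar}\eqref{pi_bar_item1}, $\mu$ intertwines $\pi^{\otimes 2}$ and $\bar\pi$, so any matrix coefficient of $\bar\pi$ is obtained by pulling back a matrix coefficient of $\pi^{\otimes 2}$ along $\mu^{-1}$.

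First I would check that $\mu$ identifies the two centers. By Example~\ref{sd_ex2}\eqref{sd_ex2_item2}, the center of $G\ltimes G$ is $Z\times Z$, where $Z$ denotes the center of $G$, and the center of $G\times G$ is also $Z\times Z$. A direct verification using $\mu(g,h)=(gh,g)$ shows that $(gh,g)\in Z\times Z$ if and only if $g\in Z$ and $h\in Z$, so $\mu$ restricts to a bijection between the centers and descends to an isomorphism $(G\times G)/(Z\times Z)\xrightarrow{\sim}(G\ltimes G)/(Z\times Z)$. Since $G$ is unimodular and for any unimodular Lie group the inner automorphisms preserve the Haar measure, both $G\times G$ and $G\ltimes G$ are unimodular, and $\mu$ pushes Haar measure to a scalar multiple of Haar measure; the same holds for the induced quotient measures. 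Hence $\bar\pi$ is square-integrable modulo $Z(G\ltimes G)$ if and only if $\pi^{\otimes 2}$ is square-integrable modulo $Z\times Z$.

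Next I would produce nonzero square-integrable matrix coefficients for $\pi^{\otimes 2}$. Let $\psi,\phi\in\Hc\setminus\{0\}$ be such that the matrix coefficients of $\pi$ they define are in $L^2(G/Z)$ (which exist by assumption), and set $T:=(\cdot\mid\phi)\psi$, $T':=(\cdot\mid\phi)\psi\in\Sg_2(\Hc)$. A short trace computation gives
\begin{equation*}
(\pi^{\otimes 2}(g_1,g_2)T\mid T')_{\Sg_2(\Hc)}
=(\pi(g_1)\psi\mid\psi)\cdot\overline{(\pi(g_2)\phi\mid\phi)},
\end{equation*}
which is invariant in absolute value under the right action of $Z\times Z$ (since $\pi\vert_Z$ is scalar by Schur's lemma). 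By Fubini's theorem, its $L^2$-norm on $(G/Z)\times(G/Z)$ factors as the product of the $L^2(G/Z)$-norms of the two matrix coefficients of $\pi$, and is therefore finite and nonzero. This proves that $\pi^{\otimes 2}$ is square-integrable modulo $Z\times Z$, and by the reduction above the same is true for $\bar\pi$ modulo the center of $G\ltimes G$.

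The only mild obstacle I foresee is bookkeeping: keeping track of the two different copies of $Z\times Z$ (one sitting as the center of $G\times G$, one as the center of $G\ltimes G$) under $\mu$, and justifying that Haar measures and the associated quotient measures transform as expected. This comes down to the observation that for any unimodular Lie group the automorphism $\Ad_g$ has Jacobian determinant of absolute value one, which guarantees that $\mu$ is measure-preserving up to a constant and hence the two notions of square-integrability coincide.
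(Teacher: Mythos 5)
Your proof is correct and is essentially the paper's argument: both reduce to the factorization of the matrix coefficient of a rank-one operator ($T_0=(\cdot\mid\phi_0)\phi_0$ in the paper) into a product of two matrix coefficients of $\pi$, and conclude by Fubini that the $L^2$-norm over the quotient by the center factors as a product of finite $L^2(G/Z)$-norms. The only difference is cosmetic --- the paper performs the substitution $gh\mapsto h$ directly in the inner integral over $(G\ltimes G)/(Z\times Z)$ using translation invariance, whereas you route the same change of variables through the isomorphism $\mu\colon G\times G\to G\ltimes G$ and the representation $\pi^{\otimes 2}$.
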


\begin{proof}
If $\pi$ is square integrable modulo the center $Z$ of $G$, 
then there is $\phi_0\in\Hc\setminus\{0\}$ such that 
the function $gZ\mapsto \vert(\pi(g)\phi_0\mid\phi_0)\vert$ 
is square integrable on $G/Z$. 
Let us define the rank-one projection $T_0=(\cdot \mid\phi_0)\phi_0$ 
Then we have 
$$\begin{aligned}
\iint\limits_{(G\ltimes G)/(Z\times Z)}\vert(\bar\pi(g,h)T_0\mid T_0)\vert^2\de g\de h 
&=\int\limits_{G/Z}\Bigl(\int\limits_{G/Z}\vert(\pi(gh)T_0\pi(g)^{-1}\mid T_0)\vert^2\de h\Bigr)\de g  \\
&= \int\limits_{G/Z}\Bigl(\int\limits_{G/Z}\vert(\pi(h)T_0\pi(g)^{-1}\mid T_0)\vert^2\de h\Bigr)\de g.
  \end{aligned}
$$
Since $T_0=(\cdot \mid\phi_0)\phi_0$, we get 
$\pi(h)T_0\pi(g)^{-1}=(\cdot\mid\pi(g)\phi_0)\pi(h)\phi_0$, and then 
$$(\pi(h)T_0\pi(g)^{-1}\mid T_0)=(\pi(h)\phi_0\mid\phi_0)\cdot(\phi_0\mid\pi(g)\phi_0).$$
Therefore 
$$\iint\limits_{(G\ltimes G)/(Z\times Z)}\vert(\bar\pi(g,h)T_0\mid T_0)\vert^2\de g\de h 
=\Bigl(\int\limits_{G/Z}\vert(\pi(g)\phi_0\mid \phi_0)\vert^2\de h\Bigr)^2 $$
hence the function $(g,h)(Z\times Z)\mapsto \vert(\bar\pi(g,h)T_0\mid T_0)\vert$ 
is square integrable on the quotient group $(G\ltimes G)/(Z\times Z)$, 
and this concludes the proof since $Z\times Z$ is the center of $G\ltimes G$ 
(see Example~\ref{sd_ex2}). 
\end{proof}

\begin{remark}\label{sq_expl}
\normalfont
Assume that $\pi\colon G \to \Bc(\Hc)$ is a square-integrable representation of a simply connected, nilpotent Lie group, with the corresponding coadjoint orbit $\Oc\subseteq \gg^*$. 
 Recall the representation $\pi^\#\colon G\ltimes G\to\Bc(L^2(\Oc))$ from Definition~\ref{pi_diez} 
(see also Remark~\ref{expl}). 
The assumption that $\pi$ is square integrable modulo the center of $G$ 
implies by Theorem~\ref{cov}\eqref{cov_item2} 
that $\pi^\#$ is given by 
$$\pi^{\#}\colon G\ltimes G\to\Bc(L^2(\Oc)),\quad 
\pi^{\#}(g,\exp Y)f=(\ee^{\ie\hake{\cdot,Y}}\# f)\circ\Ad_G^*(g^{-1})|_{\Oc}. $$
Since the unitary operator $\Op^\pi\colon L^2(\Oc)\to\Sg_2(\Hc)$  intertwines $\pi^{\#}$ and $\bar\pi$, 
we get by Lemma~\ref{sq_pi_bar} that $\pi^{\#}$ is also 
a unitary irreducible representation which is square integrable 
modulo the center $Z\times Z$ of $G\ltimes G$. 
\qed
\end{remark}

\begin{corollary}\label{sq_wigner_cont}
Let $G$ be a simply connected, nilpotent Lie group with a unitary irreducible representation 
$\pi\colon G\to\Bc(\Hc)$ which is square integrable modulo the center of~$G$. 
If $r,r_1,r_2\in[1,\infty]$ and $\frac{1}{r}=\frac{1}{r_1}+\frac{1}{r_2}$, 
then the cross-Wigner distribution associated with 
any predual to the coadjoint of the representation $\pi$ 
defines a continuous sesquilinear map 
$$\Wig(\cdot,\cdot)\colon M^{r_1,r_1}(\pi)\times M^{r_2,r_2}(\pi) 
\to M^{r,\infty}(\pi^{\#}).$$
\end{corollary}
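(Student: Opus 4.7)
The plan is to obtain Corollary~\ref{sq_wigner_cont} as an immediate consequence of Corollary~\ref{wigner_cont} by eliminating the dependence on window vectors, which becomes legitimate once the square-integrability hypothesis allows us to invoke Theorem~\ref{indep_sq}. Concretely, I would first pick any $\phi_1,\phi_2\in\Hc_\infty\setminus\{0\}$ and set $\Phi:=\Wig(\phi_1,\phi_2)\in\Sc(\Oc)$; Corollary~\ref{wigner_cont} then supplies the continuous sesquilinear map
$$\Wig(\cdot,\cdot)\colon M^{r_1,r_1}_{\phi_1}(\pi)\times M^{r_2,r_2}_{\phi_2}(\pi)\to M^{r,\infty}_{\Phi}(\pi^{\#})$$
under the exponent relation $\frac{1}{r}=\frac{1}{r_1}+\frac{1}{r_2}$.

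Second, I would argue that each of the three modulation spaces appearing above can be identified with its window-free counterpart. For the factors on the left, observe that the two exponents defining $M^{r_j,r_j}_{\phi_j}(\pi)$ coincide, so the norm reduces to the plain $L^{r_j}$-norm of the ambiguity function and the splitting of $\gg_e$ is immaterial; window-independence then follows from Theorem~\ref{indep_sq} applied to the square-integrable representation $\pi$. For the target space, Remark~\ref{sq_expl} ensures that $\pi^{\#}\colon G\ltimes G\to\Bc(L^2(\Oc))$ is itself square integrable modulo the center $Z\times Z$ of $G\ltimes G$ (cf.\ Example~\ref{sd_ex2}\eqref{sd_ex2_item2}), so a second application of Theorem~\ref{indep_sq} --- anticipated in Example~\ref{indep_sq_ex} --- yields $M^{r,\infty}_{\Phi}(\pi^{\#})=M^{r,\infty}(\pi^{\#})$ independently of~$\Phi$. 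Substituting these identifications into the inequality produced by Corollary~\ref{wigner_cont} gives the desired continuous sesquilinear map.

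The step I expect to require the most care is the verification that the hypotheses of Theorem~\ref{indep_sq} really are met by $\pi^{\#}$ on $G\ltimes G$. The center condition is immediate from Example~\ref{sd_ex2}\eqref{sd_ex2_item2}, which identifies the center with $Z\times Z$ and hence forces its splitting along the semidirect-product factorization. What remains is to exhibit a Jordan--H\"older basis of $\gg\ltimes\gg$ adapted to that factorization whose associated predual for the coadjoint orbit of $\pi^{\#}$ coincides with $\gg_e\times\gg_e$; a natural candidate is the basis $(0,X_1),\ldots,(0,X_n),(X_1,0),\ldots,(X_n,0)$ arising from the chain of ideals $\{0\}\times\gg_k$ followed by $(\{0\}\times\gg)+(\gg_k\times\{0\})$. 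Since each vector in this basis lies in exactly one of the two factors $\gg\times\{0\}$ or $\{0\}\times\gg$, the corresponding predual decomposes automatically as required, and the proof is complete.
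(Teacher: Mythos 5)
Your proposal is correct and follows essentially the same route as the paper, whose proof simply invokes Corollary~\ref{wigner_cont} and then Theorem~\ref{indep_sq} (via Remark~\ref{sq_expl} and Lemma~\ref{sq_pi_bar} for the square integrability of $\pi^{\#}$) to remove the window dependence. The only point you flag as delicate --- producing a Jordan--H\"older basis of $\gg\ltimes\gg$ whose predual is $\gg_e\times\gg_e$ and splits along the semidirect factors --- is already settled in Lemma~\ref{pi_bar}\eqref{pi_bar_item4}, so no new verification is needed there.
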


\begin{proof} 
Firstly use Corollary~\ref{wigner_cont}.
Then the conclusion follows since both $\pi$ and $\pi^{\#}$ 
are square integrable representations (see also Remark~\ref{sq_expl}), 
hence Theorem~\ref{indep_sq} shows that 
the topologies of the modulation spaces 
$M^{r_1,r_1}(\pi)$, $M^{r_2,r_2}(\pi)$, and 
$M^{r,\infty}(\pi^{\#})$ can be defined by any special choice 
of window functions. 
\end{proof}

\begin{corollary}\label{sq_C1}
If $G$ be a simply connected, nilpotent Lie group with a unitary irreducible representation 
$\pi\colon G\to\Bc(\Hc)$ which is square integrable modulo the center of~$G$, 
then the cross-Wigner distribution associated with 
any predual to the coadjoint of the representation $\pi$ 
defines a continuous sesquilinear map 
$$\Wig(\cdot,\cdot)\colon \Hc\times\Hc 
\to M^{1,\infty}(\pi^{\#}).$$
\end{corollary}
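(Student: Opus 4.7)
\medskip

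\noindent\textbf{Proof plan.} The statement should follow directly from Corollary~\ref{sq_wigner_cont} by specializing the exponents. The plan is to take $r_1=r_2=2$, in which case the balance condition $\frac{1}{r}=\frac{1}{r_1}+\frac{1}{r_2}$ forces $r=1$, and Corollary~\ref{sq_wigner_cont} then yields a continuous sesquilinear map
\[
\Wig(\cdot,\cdot)\colon M^{2,2}(\pi)\times M^{2,2}(\pi)\to M^{1,\infty}(\pi^{\#}).
\]
The only remaining task is to identify the source spaces on the left-hand side with~$\Hc$. This is the content of Example~\ref{mod_L2}, which says $M^{2,2}_\phi(\pi)=\Hc$ for any window $\phi\in\Hc_\infty\setminus\{0\}$, so the notation $M^{2,2}(\pi)$ is unambiguous and equal to $\Hc$ with an equivalent Hilbert norm.

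The only subtle point I anticipate is checking that the notation $M^{1,\infty}(\pi^{\#})$ on the right-hand side is legitimate, i.e.\ window-independent, so that Corollary~\ref{sq_wigner_cont} can be invoked as stated. For this, Theorem~\ref{indep_sq} has to apply to the representation $\pi^{\#}\colon G\ltimes G\to\Bc(L^2(\Oc))$. The square-integrability modulo the center is available via Remark~\ref{sq_expl} (which uses Lemma~\ref{sq_pi_bar}), and the Lie-algebraic splitting hypothesis~\eqref{indep_sq_eq1} is automatic here: by Example~\ref{sd_ex2}\eqref{sd_ex2_item2} the center of $\gg\ltimes\gg$ is $\zg\times\zg$, which manifestly splits as $(\zg\times\{0\})+(\{0\}\times\zg)$, and by Lemma~\ref{pi_bar}\eqref{pi_bar_item4} the predual $\bar\gg_{\bar e}=\gg_e\times\gg_e$ likewise splits compatibly with the two factors. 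So Theorem~\ref{indep_sq} does apply to~$\pi^{\#}$, and $M^{1,\infty}(\pi^{\#})$ makes sense without fixing a window.

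There is essentially no obstacle: the proof is a one-line invocation of Corollary~\ref{sq_wigner_cont} combined with the identification $M^{2,2}(\pi)=\Hc$ from Example~\ref{mod_L2}. One should, however, briefly record the three observations above—the exponent choice $r_1=r_2=2,\ r=1$; the identification of the source with the Hilbert space; and the legitimacy of the window-free notation on the target—so that the appeal to the previous corollary is fully justified.
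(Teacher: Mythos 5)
Your proposal is correct and matches the paper's own proof, which likewise just invokes Corollary~\ref{sq_wigner_cont} with $r_1=r_2=2$, $r=1$ and the identification $M^{2,2}(\pi)=\Hc$ from Example~\ref{mod_L2}. Your extra check that the window-free notation for $M^{1,\infty}(\pi^{\#})$ is legitimate is sound but already absorbed into the paper's proof of Corollary~\ref{sq_wigner_cont} via Theorem~\ref{indep_sq} and Remark~\ref{sq_expl}.
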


\begin{proof}
Just apply Corollary~\ref{sq_wigner_cont} with $r_1=r_2=2$ and $r=1$; 
and recall from Example~\ref{mod_L2} that $M^{2,2}(\pi)=\Hc$. 
\end{proof}

In the special case of the Schr\"odinger representation for the Heisenberg group, 
the following corollary recovers the assertion of Th.~1.1 in \cite{GH99} 
concerning the boundedness of pseudo-differential operators 
defined by the classical Weyl-H\"ormander  calculus on~$\RR^n$. 

\begin{corollary}\label{sq_C2}
Let $G$ be a simply connected, nilpotent Lie group with a unitary irreducible representation 
$\pi\colon G\to\Bc(\Hc)$ which is square integrable modulo the center of~$G$. 
If $r,r',r_1,r_2\in[1,\infty]$ satisfy the equations  $\frac{1}{r}=\frac{1}{r_1}+\frac{1}{r_2}=1-\frac{1}{r'}$, 
then the following assertions hold: 
\begin{enumerate}
\item For every symbol $a\in M^{r',1}(\pi^{\#})$ we have 
a bounded linear operator 
$$\Op^\pi(a)\colon M^{r_1,r_1}(\pi)\to M^{r_2,r_2}(\pi).$$ 
\item The linear mapping $\Op^\pi(\cdot)\colon M^{r',1}(\pi^{\#})\to\Bc(M^{r_1,r_1}(\pi),M^{r_2,r_2}(\pi))$ 
is continuous. 
\end{enumerate}
\end{corollary}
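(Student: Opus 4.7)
The plan is to deduce Corollary~\ref{sq_C2} as a direct specialization of Corollary~\ref{C2}, combined with the independence of the modulation spaces on the choice of window vectors granted by the square-integrability hypothesis.

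Concretely, I would first invoke Corollary~\ref{C2} with the parameter specialization $r_1 = s_1$ and $r_2 = s_2$ (so that the two algebraic constraints of that corollary collapse into a single one) and with the symbol-side indices $(r,s) = (r',1)$. Under this choice the compatibility identity from Corollary~\ref{C2} reduces to a single relation linking $r_1$, $r_2$, and $r'$, which is forced by the hypothesis $\frac{1}{r_1}+\frac{1}{r_2}=1-\frac{1}{r'}$ of the present statement, while the interval condition on $r_2,s_2$ is automatic from $1 \le r_2 \le r'$. For any fixed window vectors $\phi_1,\phi_2 \in \Hc_\infty \setminus \{0\}$ with $\Phi := \Wig(\phi_1,\phi_2) \in \Sc(\Oc)$, Corollary~\ref{C2} then yields, for every $a \in M^{r',1}_\Phi(\pi^{\#})$, a bounded operator $\Op^\pi(a) \colon M^{r_1,r_1}_{\phi_1}(\pi) \to M^{r_2,r_2}_{\phi_2}(\pi)$ together with continuity of $a \mapsto \Op^\pi(a)$.

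Second, I would strip the window subscripts using Theorem~\ref{indep_sq}. Applied with the trivial semidirect-product decomposition $G = G \ltimes \{e\}$ (for which the structural hypotheses on the center and on the predual decomposition hold vacuously), Theorem~\ref{indep_sq} gives the independence of the topologies of $M^{r_1,r_1}(\pi)$ and $M^{r_2,r_2}(\pi)$ of the choice of $\phi_1, \phi_2$. For the symbol side, Remark~\ref{sq_expl} (via Lemma~\ref{sq_pi_bar}) ensures that $\pi^{\#}$ is itself square integrable modulo the center $Z \times Z$ of $G \ltimes G$; combined with the canonical semidirect presentation of $G \ltimes G$, whose center decomposes as $Z \times Z$ by Example~\ref{sd_ex2}\eqref{sd_ex2_item2} and whose predual for the orbit of $\pi^{\#}$ decomposes as $\gg_e \times \gg_e$ by Lemma~\ref{pi_bar}\eqref{pi_bar_item4}, a second application of Theorem~\ref{indep_sq} yields the independence of $M^{r',1}(\pi^{\#})$ of $\Phi$.

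Combining the two steps, the bound from the first step, stated for arbitrary fixed windows, transfers to a bound in the window-free modulation spaces of the statement. The main obstacle is not any single deep computation but the careful bookkeeping around the second application of Theorem~\ref{indep_sq}: one must verify that the centrality decomposition $\zg^{G \ltimes G} = (\zg \times \{0\}) + (\{0\} \times \zg)$ and the predual decomposition $\gg_e \times \gg_e = (\gg_e \times \{0\}) + (\{0\} \times \gg_e)$ are both compatible with the canonical semidirect-product factors of $G \ltimes G$, both of which are immediate from Example~\ref{sd_ex2}\eqref{sd_ex2_item2} and Lemma~\ref{pi_bar}\eqref{pi_bar_item4}. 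With those verifications in hand, the boundedness and continuity asserted in the corollary follow directly.
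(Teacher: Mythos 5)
Your overall strategy coincides with the paper's: the authors also prove Corollary~\ref{sq_C2} by first invoking Corollary~\ref{C2} and then removing the window dependence via Theorem~\ref{indep_sq}, using Lemma~\ref{sq_pi_bar} and Remark~\ref{sq_expl} to see that $\pi^{\#}$ is square integrable modulo the center $Z\times Z$ of $G\ltimes G$. Your second step is in fact carried out more carefully than in the paper: the verification that the hypotheses of Theorem~\ref{indep_sq} hold for $\pi^{\#}$ with the decomposition $\bar\gg_{\bar e}=(\gg_e\times\{0\})\dotplus(\{0\}\times\gg_e)$, via Example~\ref{sd_ex2}\eqref{sd_ex2_item2} and Lemma~\ref{pi_bar}\eqref{pi_bar_item4}, is exactly the bookkeeping the paper leaves implicit, and the diagonal spaces $M^{r_j,r_j}(\pi)$ indeed only require window independence of an unmixed $L^{r_j}$-norm, so your use of a trivial decomposition there is harmless.

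There is, however, a genuine gap in your first step: the specialization of Corollary~\ref{C2} you describe does not satisfy that corollary's hypotheses. Taking the symbol-side indices to be $(r,s)=(r',1)$ violates the standing assumption $r\le s$ of Corollary~\ref{C2} unless $r'=1$ (and your remark that ``the interval condition is automatic from $1\le r_2\le r'$'' tacitly uses the opposite order $(1,r')$, so the write-up is internally inconsistent on this point). More seriously, with $r_1=s_1$, $r_2=s_2$ and either ordering of the symbol indices, the compatibility identity of Corollary~\ref{C2} becomes $\frac{1}{r_1}-\frac{1}{r_2}=1-\frac{1}{r}-\frac{1}{s}=-\frac{1}{r'}$, i.e.\ $\frac{1}{r_2}-\frac{1}{r_1}=\frac{1}{r'}$, which is a \emph{difference} of reciprocals; this is not implied by, and in general contradicts, the hypothesis $\frac{1}{r_1}+\frac{1}{r_2}=1-\frac{1}{r'}$ of Corollary~\ref{sq_C2} (the two coincide precisely when $r_2=2$, which is why the special case $r_1=r_2=2$, $r'=\infty$ of Corollary~\ref{sq_C3} is unaffected). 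Likewise the interval condition $r_2\in[1,r']$ is not automatic (e.g.\ $r_1=r_2=4$ gives $r'=2<r_2$). So your claim that the constraints of Corollary~\ref{C2} are ``forced by'' the hypothesis of the present statement is false as written. To be fair, the paper's own one-line proof glosses over the very same exponent mismatch, so this appears to be an imprecision inherited from the source; but since you made the matching of constraints an explicit step, you should either restrict to the exponents for which the two sets of conditions genuinely agree or rework the reduction to Corollary~\ref{C2} (equivalently, to Theorem~\ref{wigner_cont_th} plus H\"older duality) with the correct algebra of indices.
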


\begin{proof}
Firstly use Corollary~\ref{C2}. 
Then the conclusion follows since 
Theorem~\ref{indep_sq} shows that 
the topologies of the modulation spaces involved in the statement 
can be defined by any special choice 
of window functions. 
\end{proof}

\begin{corollary}\label{sq_C3}
If $G$ be a simply connected, nilpotent Lie group with a unitary irreducible representation 
$\pi\colon G\to\Bc(\Hc)$ which is square integrable modulo the center of~$G$, 
then the following assertions hold: 
\begin{enumerate}
\item For every $a\in M^{\infty,1}(\pi^{\#})$ we have 
$\Op^\pi(a)\in\Bc(\Hc)$. 
\item The linear mapping $\Op^\pi(\cdot)\colon M^{\infty,1}(\pi^{\#})\to\Bc(\Hc)$ is continuous. 
\end{enumerate}
\end{corollary}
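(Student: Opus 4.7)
The plan is to derive Corollary~\ref{sq_C3} as a direct specialization of Corollary~\ref{sq_C2}, in exact parallel with how Corollary~\ref{C3} was obtained from Corollary~\ref{C2} in the general (not necessarily square-integrable) setting. The only extra work needed compared to the non-square-integrable case is invoking the window-independence of the modulation spaces.

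First I would choose the parameters $r_1=r_2=2$, which forces $\frac{1}{r}=\frac{1}{r_1}+\frac{1}{r_2}=1$ and hence $r=1$, together with $\frac{1}{r'}=1-\frac{1}{r}=0$, i.e.\ $r'=\infty$. These values satisfy the hypotheses of Corollary~\ref{sq_C2}, and plugging them into that corollary yields a continuous linear map
\[
\Op^\pi(\cdot)\colon M^{\infty,1}(\pi^{\#})\to\Bc\bigl(M^{2,2}(\pi),M^{2,2}(\pi)\bigr).
\]

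Next I would identify the target. By Example~\ref{mod_L2}, for any choice of window vector $\phi\in\Hc_\infty\setminus\{0\}$ we have $M^{2,2}_\phi(\pi)=\Hc$ with equivalent norms. Since $\pi$ is square integrable modulo the center, Theorem~\ref{indep_sq} applies (the hypothesis \eqref{indep_sq_eq1} on the center is automatic here because we can take $G_1=\{e\}$ and $G_2=G$, so the notation $M^{2,2}(\pi)$ without a window is legitimate and agrees with~$\Hc$). Analogously, by Remark~\ref{sq_expl} the representation $\pi^{\#}$ is itself square integrable modulo the center of $G\ltimes G$, so Theorem~\ref{indep_sq} applies to $\pi^{\#}$ as well, justifying the window-free notation $M^{\infty,1}(\pi^{\#})$.

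Combining these two identifications, the map above becomes precisely
\[
\Op^\pi(\cdot)\colon M^{\infty,1}(\pi^{\#})\to\Bc(\Hc),
\]
which is both the boundedness statement in~(1) and the continuity statement in~(2). I do not anticipate any real obstacle: the content has already been carried out in Corollary~\ref{sq_C2} (which in turn rested on Corollary~\ref{C2} and Theorem~\ref{indep_sq}), and the only subtlety is making sure that the window-independence applies to $\pi^{\#}$, which is precisely the point of Remark~\ref{sq_expl} together with Lemma~\ref{sq_pi_bar}.
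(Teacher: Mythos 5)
Your proposal is correct and follows essentially the same route as the paper: the paper's own proof of Corollary~\ref{sq_C3} is exactly the specialization of Corollary~\ref{sq_C2} to $r_1=r_2=2$, $r=1$ (hence $r'=\infty$), combined with the identification $M^{2,2}(\pi)=\Hc$ from Example~\ref{mod_L2}. The extra remarks you add about window-independence via Theorem~\ref{indep_sq} and Remark~\ref{sq_expl} are already built into Corollary~\ref{sq_C2} in the paper, so they are harmless but not needed again here.
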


\begin{proof}
This is the special case of Corollary~\ref{sq_C2} with with 
$r_1=r_2=2$ and $r=1$, since  
Example~\ref{mod_L2} shows that $M^{2,2}(\pi)=\Hc$. 
\end{proof}

\section{Schr\"odinger representations of the Heisenberg groups}\label{Sect4}

We show in the present section that, in the special case of the Heisenberg group, 
the modulation spaces of symbols defined in our paper are in fact nothing else than 
the modulation spaces widely used in time-frequency analysis.  

\subsection*{Schr\"odinger representations}
Let $\Vc$ be a finite-dimensional vector space 
endowed with a nondegenerate bilinear form denoted by $(p,q)\mapsto p\cdot q$.
The corresponding \emph{Heisenberg algebra} 
${\mathfrak h}_{\Vc}=\Vc\times\Vc\times{\mathbb R}$ 
is the Lie algebra with the bracket 
$$[(q,p,t),(q',p',t')]=[(0,0,p\cdot q'-p'\cdot q)]. $$
The \emph{Heisenberg group} 
${\mathbb H}_{\Vc}$ is just ${\mathfrak h}_{\Vc}$ thought of as a group 
with the multiplication~$\ast$ 
defined by 
$$X\ast Y=X+Y+\frac{1}{2}[X,Y]. $$
The unit element is $0\in{\mathbb H}_{\Vc}$ 
and the inversion mapping given by $X^{-1}:=-X$.
The \emph{Schr\"odinger representation} is the unitary representation 
$\pi_{\Vc}\colon{\mathbb H}_{\Vc}\to{\mathcal B}(L^2(\Vc))$ 
defined by  
\begin{equation}\label{sch_eq}
(\pi_{\Vc}(q,p,t)f)(x)={\rm e}^{{\rm i}(p\cdot x+\frac{1}{2}p\cdot q+t)}f(x+q) 
\quad\text{ for a.e. }x\in\Vc
\end{equation}
for arbitrary $f\in L^2(\Vc)$ and $(q,p,t)\in{\mathbb H}_{\Vc}$. 
This is a square-integrable representation and the corresponding coadjoint orbit of $\HH_{\Vc}$ is 
\begin{equation}\label{sch_eq1}
\Oc=\{\xi\colon\hg_{\Vc}\to\RR\text{ linear}\mid \xi(0,0,1)=1\}.
\end{equation}
Let $\xi_0\in\Oc$ be the functional satisfying $\xi_0(q,p,0)=0$ 
for every $q,p\in\Vc$.
If we denote $\dim\Vc=n$, then any basis $\{x_1,\dots,x_n\}$ in $\Vc$ 
naturally gives rise to the Jordan-H\"older basis 
$$(x_1,0,0),\dots,(x_n,0,0),(0,x_1,0),\dots,(0,x_n,0),(0,0,1) $$
in $\hg_{\Vc}$ and the corresponding predual of $\Oc$ is 
$$(\hg_{\Vc})_e=\Vc\times\Vc\times\{0\}. $$
For the sake of an easier comparison with the previously obtained results 
we shall denote $G=\HH_{\Vc}$ and $\gg=\hg_{\Vc}$ from now on, 
and in particular we shall denote $\gg_e=(\hg_{\Vc})_e$. 

\subsection*{Computing the Moyal product representation}
Recall from \cite{Ma07} that for every $f,h\in\Sc(\Oc)$ we have 
\begin{equation*}
(\forall\xi\in\Oc)\quad
(f\# h)(\xi)=\iint\limits_{\gg_e\times\gg_e}
\ee^{\ie\hake{\xi,X+Y}}\ee^{(\ie/2)\hake{\xi_0,[X,Y]}}
\widehat{f}(X)\widehat{h}(Y)\de X\de Y.
\end{equation*}
It then follows by a duality argument that for every $f\in\Sc(\Oc)$ 
and $V\in\gg$ we have 
$$(f\#\ee^{-\ie\hake{\cdot,V}})(\xi)=
\int\limits_{\gg_e}\ee^{\ie\hake{\xi,X-V}}
\ee^{(\ie/2)\hake{\xi_0,[X,-V]}}
\widehat{f}(X)\de X,
 $$
whence 
\begin{equation}\label{diez_r}
(\forall V\in\gg)(\forall\xi\in\Oc)\quad (f\#\ee^{-\ie\hake{\cdot,V}})(\xi)
=\ee^{-\ie\hake{\xi,V}}f(\xi+(1/2)\xi_0\circ\ad_{\gg}V).
\end{equation}
Since $\overline{f\# h}=\bar{h}\#\bar{f}$, we also get 
\begin{equation}\label{diez_l}
(\forall V\in\gg)(\forall\xi\in\Oc)\quad (\ee^{\ie\hake{\cdot,V}}\# f)(\xi)
=\ee^{\ie\hake{\xi,V}}f(\xi+(1/2)\xi_0\circ\ad_{\gg}V).
\end{equation}
Now for arbitrary $X,Y\in\gg$, $f\in\Sc(\Oc)$, and $\xi\in\Oc$ we get 
\allowdisplaybreaks
\begin{align}
(\ee^{\ie\hake{\cdot,X+Y}}\# f\#\ee^{-\ie\hake{\cdot,X}})
=&\ee^{\ie\hake{\xi,X+Y}}
(f\#\ee^{-\ie\hake{\cdot,X}})(\xi+(1/2)\xi_0\circ\ad_{\gg}(X+Y)) \nonumber\\
=&\ee^{\ie\hake{\xi,X+Y}}
\ee^{-\ie\hake{\xi+(1/2)\xi_0\circ\ad_{\gg}(X+Y),X}} \nonumber\\
&\times f(\xi+(1/2)\xi_0\circ\ad_{\gg}(X+Y)+(1/2)\xi_0\circ\ad_{\gg}X) \nonumber \\
=&\ee^{\ie\hake{\xi,Y}}
\ee^{(\ie/2)\hake{\xi_0,[X,Y]}} \nonumber\\
&\times f(\xi+(1/2)\xi_0\circ\ad_{\gg}(X+(1/2)Y)). \nonumber
\end{align}
By taking into account \eqref{pi_diez_eq1}, we now see that 
the unitary irreducible representation 
$\pi_{\Vc}^{\#}\colon G\ltimes G\to\Bc(L^2(\Oc))$ is given by 
\begin{equation}\label{sch_diez}
(\pi_{\Vc}^{\#}(\exp_{G\ltimes G}(X,Y))f)(\xi)
=\ee^{\ie(\hake{\xi,Y}+\hake{\xi_0,[X,Y]}/2)} 
f(\xi+\xi_0\circ\ad_{\gg}(X+(1/2)Y)) 
\end{equation}
where the latter equation follows by \eqref{sch_eq1}. 

\subsection*{Abstract unitary equivalence}
Denote the center of $G$ by $Z$, with the corresponding Lie algebra $\zg$. 
The above formula yields
$\exp_{G\ltimes G}(\zg\times\{0\})\subseteq\Ker\,\pi_{\Vc}^{\#}$, 
hence we get a unitary irreducible representation 
$\overline{\pi_{\Vc}^{\#}}\colon (G\ltimes G)/(Z\times\{\1\})\to\Bc(L^2(\Oc))$. 
Also note that there exist the natural isomorphisms of Lie groups 
\begin{equation}\label{sch_isom}
(G\ltimes G)/(Z\times\{\1\})\simeq(G/Z)\ltimes G\simeq\HH_{\Vc\times\Vc}.
\end{equation}
By specializing \eqref{sch_diez} for $X,Y\in\zg$ we can see that 
the representation $\overline{\pi_{\Vc}^{\#}}$ has the same central character as 
the Schr\"odinger representation of the Heisenberg group $\HH_{\Vc\times\Vc}$, 
hence they are unitarily equivalent to each other, as a consequence of the Stone-von Neumann theorem. 

\subsection*{Specific unitary equivalence}
Alternatively, we can exhibit an explicit unitary equivalence as follows. 
Let us consider the affine isomorphism 
$\Oc\to(\Vc\times\Vc)^*$, $\xi\mapsto\xi\vert_{\Vc\times\Vc\times\{0\}}$, 
and the natural embedding $\Vc\times\Vc\simeq\Vc\times\Vc\times\{0\}\hookrightarrow\hg_{\Vc}$. 
Now for $X,Y\in\Vc\times\Vc$ and $t\in\RR$ we have 
$(X,Y,t)\in\HH_{\Vc\times\Vc}\simeq (G\ltimes G)/(Z\times\{\1\})$ 
(see \eqref{sch_isom}) hence 
$$\begin{aligned}
(\overline{\pi_{\Vc}^{\#}}(\exp_{\HH_{\Vc\times\Vc}}(X,Y,t))f)(\xi)
=&(\pi_{\Vc}^{\#}(\exp_{G\ltimes G}((X,0),(Y,t)))f)(\xi) \\
=&\ee^{\ie(\hake{\xi,Y}+t+\omega_{\xi_0}(X,Y)/2)} 
f(\xi+\xi_0\circ\ad_{\gg}(X+(1/2)Y))
\end{aligned}$$
where 
$\omega_{\xi_0}(V,W):=\hake{\xi_0,[V,W]}$ whenever $V,W\in\Vc\times\Vc\hookrightarrow\gg$. 
Thence we get 
$$(\overline{\pi_{\Vc}^{\#}}(\exp_{\HH_{\Vc\times\Vc}}(X-(1/2)Y,Y,t))f)(\xi)
=\ee^{\ie(\hake{\xi,Y}+t+\omega_{\xi_0}(X,Y)/2)} 
f(\xi+\xi_0\circ\ad_{\gg}X). $$
Note that 
$\psi\colon\hg_{\Vc\times\Vc}\to\hg_{\Vc\times\Vc}$, $(X,Y,t)\mapsto(X-(1/2)Y,Y,t)$ 
is an automorphism of the Heisenberg algebra $\hg_{\Vc\times\Vc}$, 
hence, by denoting by $\Psi\colon\HH_{\Vc\times\Vc}\to\HH_{\Vc\times\Vc}$ 
the corresponding automorphism of the Heisenberg group $\HH_{\Vc\times\Vc}$, 
we get 
\begin{equation*}
(\widetilde{\pi}(\exp_{\HH_{\Vc\times\Vc}}(X,Y,t))f)(\xi)=
\ee^{\ie(\hake{\xi,Y}+\omega_{\xi_0}(X,Y)/2+t)} 
f(\xi+\xi_0\circ\ad_{\gg}X)
\end{equation*}
where $\widetilde{\pi}:=\overline{\pi_{\Vc}^{\#}}\circ\Psi$ 
is again a representation of the Heisenberg group $\HH_{\Vc\times\Vc}$. 
Then for arbitrary $V\in\Vc\times\Vc$ we get %by \eqref{sch_tilde}
\begin{equation}\label{sch_tilde}
\begin{aligned}
(\widetilde{\pi}(\exp_{\HH_{\Vc\times\Vc}}(X,Y,t))f)(\xi_0+\xi_0\circ\ad_{\gg}V)
=& \ee^{\ie(\omega_{\xi_0}(V,Y)+\omega_{\xi_0}(X,Y)/2+t)} \\
&\times f(\xi_0+\xi_0\circ\ad_{\gg}(V+X)).
\end{aligned}
\end{equation}
Now let us define the affine isomorphism
$$A\colon\Vc\times\Vc\to\Oc,\quad V\mapsto\xi_0+\xi_0\circ\ad_{\gg}V$$
and consider the unitary operator 
$U\colon L^2(\Vc\times\Vc)\to L^2(\Oc)$, $f\mapsto f\circ A^{-1}$. 
It follows by the above equation that if we define the Heisenberg group $\HH_{\Vc\times\Vc}$ 
by using the nondegenerate bilinear map 
$$(\Vc\times\Vc)\times(\Vc\times\Vc)\to\RR,\quad (V,W)\mapsto-\omega_{\xi_0}(V,W),$$
then the unitary operator $U$ intertwines the representation 
$\widetilde{\pi}\colon\HH_{\Vc\times\Vc}\to\Bc(L^2(\Oc))$ 
and the Schr\"odinger representation 
$\pi_{\Vc\times\Vc}\colon\HH_{\Vc\times\Vc}\to\Bc(L^2(\Vc\times\Vc))$. 
In other words, the operator $U$ induces a unitary equivalence 
of the representation $\overline{\pi_{\Vc}^{\#}}$ with the representation 
$\pi_{\Vc\times\Vc}\circ\Psi^{-1}$. 

\subsection*{Determining the modulation spaces of symbols}
It follows by the above discussion 
that the operator $U$ induces isomorphisms between 
the modulation spaces for the representations 
$$\pi_{\Vc}^{\#}\colon G\ltimes G\to\Bc(L^2(\Oc))
\quad\text{and}\quad
\pi_{\Vc\times\Vc}\circ\Psi^{-1}\colon \HH_{\Vc\times\Vc}\to\Bc(L^2(\Vc\times\Vc)).$$ 
Now note that for arbitrary $r,s\in[1,\infty]$ 
we have $M^{r,s}(\pi_{\Vc\times\Vc}\circ\Psi^{-1})=M^{r,s}(\pi_{\Vc\times\Vc})$
since the norm of 
any measurable function $f\colon \Vc\times\Vc\to\CC$ in $L^{r,s}(\Vc\times\Vc)$ 
is equal to the norm of the function $(X,Y)\mapsto f(X+(1/2)Y,Y)$ in the same space. 
Therefore \emph{the operator $f\mapsto f\circ A^{-1}$ actually induces an isomorphism 
from the modulation space $M^{r,s}(\pi_{\Vc}^{\#})$ onto 
the modulation space $M^{r,s}(\pi_{\Vc\times\Vc})$ of the Schr\"odinger representation 
$\pi_{\Vc\times\Vc}\colon \HH_{\Vc\times\Vc}\to\Bc(L^2(\Vc\times\Vc))$}. 
Finally, recall that $M^{r,s}(\pi_{\Vc\times\Vc})=M^{r,s}(\Vc\times\Vc)$, 
where the latter is just the classical modulation space on $\Vc\times\Vc$ 
as used for instance in \cite{Gr01}.

\end{document}